\newtheorem{theorem}{Theorem}[section]
\newtheorem{proposition}{Proposition}[section]
\newtheorem{lemma}{Lemma}[section]
\numberwithin{equation}{section}
\def\d{\mathrm{d}}
\def\no{\nonumber}
\def\R{\mathbb{R}}
\def\eps{\epsilon}
\def\J{\mathcal{J}}
\newcounter{wronumber}\setcounter{wronumber}{1}
\begin{document}
\title
			{GLOBAL CLASSICAL SOLUTIONS TO AN EVOLUTIONARY MODEL FOR MAGNETOELASTICITY}

\author[N. Jiang]{Ning Jiang}
\address[Ning Jiang]{\newline School of Mathematics and Statistics, Wuhan University, Wuhan, 430072, P. R. China}
\email{njiang@whu.edu.cn}

\author[H. Liu]{Hui Liu}
\address[Hui Liu]
{\newline School of Mathematics and Statistics, Wuhan University, Wuhan, 430072, P. R. China}
\email{hui.liu@whu.edu.cn}

\author[Y.-L. Luo]{Yi-Long Luo}
\address[Yi-Long Luo]
		{\newline School of Mathematics and Statistics, Wuhan University, Wuhan, 430072, P. R. China}
\email{yl-luo@whu.edu.cn}

\maketitle

\begin{abstract}
  In this paper, we first prove the local-in-time existence of the evolutionary model for magnetoelasticity with finite initial energy by employing the nonlinear iterative approach given in \cite{Jiang-Luo-2019-SIAM} to deal with the geometric constraint $M \in \mathbb{S}^{d-1}$ in the Landau-Lifshitz-Gilbert (LLG) equation. Inspired by \cite{Lin-Liu-Zhang-CPAM2005, Lin-Zhang-2008-CPAM}, we reformulate the evolutionary model for magnetoelasticity with vanishing external magnetic field $H_{ext}$, so that a further dissipative term will be sought from the elastic stress. We thereby justify the global well-posedness to the evolutionary model for magnetoelasticity with zero external magnetic field under small size of initial data.\\

  \noindent\textsc{Keywords.} magnetoelasticity; global classical solutions; Landau-Lifshitz-Gilbert equation; deformation gradient flow. \\

  \noindent\textsc{AMS subject classifications.} 35B45, 35B65, 35Q35, 76D03, 76D09, 76D10
\end{abstract}



\section{Introduction}\label{Sec-Intro}

%
%

\subsection{Modeling of magnetoelasticity}
The discovery of magnetoelasticity dates back at least to the 19th century (see \cite{Brown-1966}). Magnetoelastic interaction or simply magnetoelasticity describes a class of phenomena on the interaction between elastic and magnetic effects: if a ferromagnetic rod is subject to a magnetizing field, the rod changes not only its magnetization but also its length, and in the opposite way, if the rod experiences tension, its length as well as its magnetization changes. Modeling of magnetoelastic materials goes back to Brown \cite{Brown-1966} as well as Tiersten \cite{Tiersten-1964-JMP, Tiersten-1965-JMP}. Regarding the analytical works,  besides many studies on static case relying on energy minimization \cite{DeSimone-Dolzmans-ARMA1998, DeSimone-James-2002, James-Kinderlehrer-1993}, rate-independent evolution models were investigated in
\cite{Kruzik-Stefanelli-Zeman} using the concept of energetic solutions, see also \cite{Mielke-Roubicek}. In micromagnetics, the dynamics is usually governed by the Landau-Lifshitz-Gilbert (LLG) equations \cite{Gilbert-1955, Gilbert-2004, LL-1935}, which has been extensively studied analytically, see for example \cite{A-S-NA1992, CF-2001, Melcher-2007, Melcher-2010}. If the LLG equation is coupled with elasticity, there are some works in the small strain setting \cite{CEF-2011, CSVC-2009}.

The major difficulty in analyzing magneoelastic models lies in the fact that usually elasticity is formulated in the reference configuration, while micromagneticsis modeled in the deformed configuration. This difficulty might disappear in the aforementioned small-strain setting in which the difference between the actual and reference configuration is neglected \cite{CEF-2011, CSVC-2009}. In general setting, one might transform the magnetic part back into the reference configuration as in \cite{DeSimone-Dolzmans-ARMA1998, DeSimone-James-2002, Kruzik-Stefanelli-Zeman}. However, this is only possible if one can assure, by suitable modeling assumptions, that the deformation is invertible. In the static case, this can be enforced by suitable coercivity of the elastic energy, in particular, the energy has to blow up as the determinant of the deformation gradient tends to zero. The dynamic case is more involved because the balance law for the deformation also features the inertia term. On the other hand, in \cite{RT-2017}, a magnetoelastic model is formulated in the fully Lagrangian setting. However, mathematical analysis of such a model could only be performed under several simplifying assumptions.

Recently, an energetic variational approach is taken to formulate the fully nonlinear problem of magnetoelsticity completely in Eulerian coordinates in the current configuration \cite{BFLS-2018-SIAM, Forster-2016}. By employing the idea of \cite{Liu-Walkington-ARMA2001}, a transport equation for the deformation gradient is found to allow one to obtain the deformation gradient in the current configuration from the velocity gradient. Consequently, the major obstacle from the point view of elasticity, the invertibility of the deformation is resolved. As for the magnetic part, the evolution of magnetization is modeled by Landau-Lifshitz-Gilbert equation with the time derivative replaced the convective one, which is in order to take into account that changes of the magnetization also occur due to transport by underlying viscoelastic material. By this approach, the evolution of magnetoelasticity is modeled by :
\begin{equation}\label{MEL-original}
\left\{
\begin{array}{l}
\partial_t v + v \cdot \nabla v + \nabla p + \nabla \cdot (2A \nabla M \odot \nabla M - W^{\prime}(F) F^{\top})- \nu \Delta v = \ \mu_{0} (\nabla H_{ext})^{\top} M  \,, \\
\nabla \cdot v = 0 \,, \\
\partial_t F + v \cdot \nabla F- \nabla v F= \kappa \Delta F \,, \\
\partial_t M + v \cdot \nabla M  = - \gamma M \times (2A \Delta M + \mu_{0} H_{ext})- \lambda M \times [M \times (2A \Delta M + \mu_{0} H_{ext} )] \,,
\end{array}
\right.
\end{equation}
in $\mathbb{R}^+ \times \mathbb{R}^d$ for $d = 2, 3$, which describes the magnetoelastic material responding to applied magnetic fields and reacting with a change of magnetization to mechanical stress.

 In the system \eqref{MEL-original}, the first equation of the bulk velocity $v (t,x) \in \mathbb{R}^d$ is the balance of momentum in Eulerian coordinates with stress tensor $\mathcal{T} = - p I + \nu ( \nabla v + \nabla v^\top ) + W' (F) F^\top - 2 A \nabla M \odot \nabla M$, where $p (t,x) \in \mathbb{R}$ is the hydrodynamic pressure, $W' (F) F^\top - 2 A \nabla M \odot \nabla M$ is the magnetoelastic part of the stress with $(\nabla M \odot \nabla M)_{ij} = \sum_{k} \partial_i M_k \partial_j M_k$ and $W(F)$ is the elastic energy. From now on, we will use the Einstein summation convection, for example, $(\nabla M \odot \nabla M)_{ij} = \partial_i M_k \partial_j M_k$. $F(t,x) = (F^{ij} (t,x))_{1 \leq i,j \leq d} \in \mathbb{R}^{d \times d}$ represents the deformation gradient with respect to the velocity $v (t,x)$ which obeys the evolution of the third evolution of deformation gradient in \eqref{MEL-original}, where $F^{ij}(t,x)$ is the entries of the $i$-th row and the $j$-th column of $F (t,x)$. We also denote by $(\nabla v)_{ij} = \partial_j v^i$. The last equation of \eqref{MEL-original} is the Landau-Lifshitz-Gilbert (LLG) equation with the effective magnetic field $ 2 A \Delta M + \mu_0 H_{ext} $, where $M(t,x) \in \mathbb{S}^{d-1}$ stands for the magnetization and $H_{ext} (t,x) \in \mathbb{R}^d$ denotes the given external magnetic field.

 In this system, $\nu > 0$ is the viscosity of the fluid, $\gamma > 0$ is the electron gyromagnetic ratio, $\lambda > 0$ is a phenomenological damping parameter, $A , \mu_0 > 0$ are the parameters coming from the Helmholtz free energy, and the constant $\kappa \geq 0$ as shown in \cite{BFLS-2018-SIAM}. We emphasize that $\kappa = 0$ is physically reasonable, which ensures that $\det F = 1$ is equivalent to the incompressibility $\nabla \cdot v = 0$ if we initially assume $\det F_0 = 1$, while $\kappa > 0$ is just a regularization of the deformation gradient $F$, which cannot make sure that $\det F = 1$ under the assumption of incompressibility. In this sense, we are more interested in the case $\kappa = 0$. Since constants are irrelevant for mathematical analysis, here and in the following we set $A = \tfrac{1}{2}$, $\mu_0 = 1$, $\gamma = \lambda = 1$ and take the elastic stress $W(F) = \tfrac{1}{2} |F|^2$. More precisely, we will consider the following simplified system in current paper:
\begin{equation}\label{MEL}
  \left\{
    \begin{array}{l}
      \partial_t v + v \cdot \nabla v + \nabla p + \nabla \cdot ( \nabla M \odot \nabla M - F F^\top ) - \nu \Delta v = ( \nabla H_{ext} )^\top M \,, \\
      \nabla \cdot v = 0 \,, \\
      \partial_t F + v \cdot \nabla F = \nabla v F \,, \\
      \partial_t M + v \cdot \nabla M = \Delta M + H_{ext} + \Gamma (M) M - M \times ( \Delta M + H_{ext} ) \,, \\
      |M| = 1 \,,
    \end{array}
  \right.
\end{equation}
where $\Gamma (M)$ is the Lagrangian multiplier
\begin{equation}\label{Lagrangian-Multiplier}
  \begin{aligned}
    \Gamma (M) = |\nabla M|^2 - M \cdot H_{ext} \,.
  \end{aligned}
\end{equation}
Here the last $M$-equation is equivalent to the LLG equation in \eqref{MEL-original} provided that $|M| = 1$. Moreover, the initial data of \eqref{MEL} is imposed on
\begin{equation}\label{IC-MEL}
\begin{aligned}
v(0, x) = v_{0}(x) \in \mathbb{R}^d \,,\ F(0, x) = F_{0}(x) \in \mathbb{R}^{d \times d} \,,\  M(0, x) = M_{0}(x) \in \mathbb{S}^{d-1}
\end{aligned}
\end{equation}
with compatibilities $\nabla \cdot v_{0}(x) = 0$ and $\det F_0 (x) = 1$.

\subsection{Analytical difficulties and ideas}
The system \eqref{MEL} can be viewed as a nonlinear coupling of hydrodynamics of viscoelasticity and LLG, each of which the analytical studies have been extensive in the past two decades. However, for the coupling, i.e. the system \eqref{MEL}, the research on the well-posedness is very few.  In \cite{BFLS-2018-SIAM}, global in time weak solutions are constructed by using Galerkin method and a fixed point argument. The proof in \cite{BFLS-2018-SIAM} combines the ideas of Lin-Zhang \cite{Lin-Liu-CPAM1995} on the liquid crystal flow and Carbou-Fabrie \cite{CF-2001} on the Landau-Lifshitz equation. See also the recent progress in this direction \cite{KKS-2019} and \cite{Zhao-DCDS2019}. We emphasize that the so-called weak solutions in \cite{BFLS-2018-SIAM} and \cite{KKS-2019} are not in the usual sense: the spatial regularity requirement on $M$ is not $H^1$, but higher ($H^3$ in \cite{BFLS-2018-SIAM} and $H^2$ in \cite{KKS-2019}). Furthermore, both \cite{BFLS-2018-SIAM} and \cite{KKS-2019} consider the regularized transport equation version, i.e. the system \eqref{MEL-original}, but not the physical case $\kappa = 0$, i.e. the system \eqref{MEL} considered in the current paper. In addition, the size of the initial deviation of $M$ to the identity matrix $I$ are required to be small. All of these unsatisfactory assumptions are coursed by the essential difficulties of the lack of regularity of the transport equation and the geometric constraint of $M$, $|M|=1$, which is extremely hard to be approximated in weaker norm. In fact, it is still highly nontrivial even in higher regular norm considered in this paper. We remark that in \cite{Zhao-DCDS2019}, a local well-posedness and blow-up criteria of classical solutions are established for the modified \eqref{MEL}, i.e. replacing the constraint $|M|=1$ by the usual Ginzburg-Landau approximation.

In this paper, instead of Ginzburg-Landau approximation or regularized version \eqref{MEL-original} with $\kappa > 0$, we consider the physical model \eqref{MEL} with $\kappa =0$ and with the geometric constraint $|M|=1$. Furthermore, we work in the context of classical solutions, rather than the weak-strong solution studied in \cite{BFLS-2018-SIAM} and \cite{KKS-2019}. Our aim is to provide a first existence theory of global classical solutions to the full evolutionary magnetoelastic model \eqref{MEL}. We state the main difficulties and the ideas of our methods in the rest of this subsection.

The system \eqref{MEL} can be regarded as an incompressible viscoelastic fluid system of $(v,F)$ in the Oldroyd model coupled with the LLG equation which describes the micromagnetics. The LLG equation is a type of heat flow to the unit sphere, which is closely related to the liquid crystal model. The $F(t,x)$ is the deformation gradient tensor with respect to the velocity field $v (t,x)$. More precisely, we define the following ordinary differential equation:
\begin{equation}\label{Trajectory}
  \begin{aligned}
    \partial_t x (t, X) = v ( t, x (t, X) ) \,, \quad x (0,X) = X \,.
  \end{aligned}
\end{equation}
The deformation tensor $\bar{F} (t,X)$ is then defined as
\begin{equation*}
  \bar{F} (t, X) : = \nabla_X x (t,X) \,.
\end{equation*}
Then the deformation tensor $F(t,x)$ is
$$ F(t,x) = \bar{F} (t, X^{-1} (t,x)) \,, $$
where $X^{-1} (t,x)$ is the inverse mapping of $x (t, X)$ defined by \eqref{Trajectory} and $F$ will automatically satisfy $\partial_t F + v \cdot \nabla F = \nabla v F$. Furthermore, the incompressibility of the fluid can then be represented as
\begin{equation}\label{Incomprsblt-F}
  \begin{aligned}
    \det F = 1 \,,
  \end{aligned}
\end{equation}
which is equivalent to $\nabla \cdot v = 0$ if we assume $\det F_0 = 1$.

The first goal of this paper is to prove the local existence result to the system \eqref{MEL} with initial data \eqref{IC-MEL}. We point out that the cancelations (the bracket $\langle \cdot \,, \cdot \rangle$ denotes the standard $L^2$ inner product):
\begin{equation}\label{Cancellation-M}
  \begin{aligned}
    \big\langle \nabla \cdot ( \nabla M \odot \nabla M ) , v \big\rangle - \big\langle v \cdot \nabla M , \Delta M \big\rangle = 0
  \end{aligned}
\end{equation}
and
\begin{equation}\label{Cancellation-F}
  \begin{aligned}
    \big\langle \nabla \cdot ( F F^\top ) , v \big\rangle + \big\langle \nabla v F , F \big\rangle = 0
  \end{aligned}
\end{equation}
play an essential role in the derivation of the a priori estimates to the system \eqref{MEL}. However, as in Ericksen-Leslie's liquid crystal model, one of the difficulty is to deal with the geometric constraint $|M| = 1$, which is highly nonlinear in particular for the norm with higher derivatives. Inspired by the work \cite{Jiang-Luo-2019-SIAM} of the first and the third authors of current paper, the key point is that if $\Gamma (M)$ in the last LLG equation of \eqref{MEL} is of the form \eqref{Lagrangian-Multiplier} and the initial data $M_0$ satisfies $|M_0| = 1$, then the constraint $|M| = 1$ for the {\em solution} to \eqref{MEL} will be {\em forced} to hold at any time $t \geq 0$. This means that $|M| = 1$ need only be given on the initial condition, while in the system \eqref{MEL}, we do not require $|M| = 1$ explicitly. These facts will be rigorously verified in Lemma \ref{Lmm-|M|=1}.

The other goal is to construct the global existence result to the system \eqref{MEL} with vanishing external magnetic field, i.e., $H_{ext} \equiv 0$ under small size of the initial data. We emphasize that because of the non-vanishing of external magnetic field $H_{ext}$, the energy of whole system does not decay. So we assume $H_{ext} = 0$ in proving the global existence result. The main difficulty is the absence of apparent dissipative mechanism in the evolution of the deformation gradient $F$. We employ the ideas in Lin-Liu-Zhang's work \cite{Lin-Liu-Zhang-CPAM2005, Lin-Zhang-2008-CPAM}. More precisely, we introduce
\begin{equation}
  \begin{aligned}
     U = F^{-1} \,, \qquad G = U - I \,,
  \end{aligned}
\end{equation}
where $I$ is the $d \times d$ unit matrix and $U$ obeys the evolution
\begin{equation}
  \begin{aligned}
    \partial_t U + v \cdot \nabla U + U \nabla v = 0 \,.
  \end{aligned}
\end{equation}
Moreover, the matrix-valued function $G(t,x)$ is of an important property (see (1.8) in \cite{Lin-Zhang-2008-CPAM})
\begin{equation}\label{curl-free-G}
  \begin{aligned}
    \partial_i G^{jk} = \partial_k G^{ji} \qquad \forall \ i,j,k = 1,2,\cdots , d \,,
  \end{aligned}
\end{equation}
which means that the matrix $G$ is curl free. Consequently, there exists a $\mathbb{R}^d$-valued function $\psi (t,x) = ( \psi^1 (t,x), \psi^2 (t,x) , \cdots , \psi^d (t,x) )$ such that
\begin{equation}\label{Gradient-psi}
  \begin{aligned}
    ( G^{j1}, G^{j2}, \cdots, G^{jd} ) = \nabla \psi^j \qquad \forall \ j = 1,2, \cdots, d \,.
  \end{aligned}
\end{equation}
Thanks to (3.3) of \cite{Lin-Zhang-2008-CPAM}, we know that
\begin{equation}\label{G-F-relation}
  \begin{aligned}
    \| G (t) \|_{H^s} = \| F^{-1} (t) - I \|_{H^s} \thicksim \| F(t) - I \|_{H^s} \,,
  \end{aligned}
\end{equation}
where the Hilbert space $H^s$ will be defined precisely latter. For the elastic stress $F F^\top$, by using Taylor's expansion
\begin{equation}
  \begin{aligned}
    F F^\top = ( I + G )^{-1} (I + G)^{- \top} = I - G - G^\top + g (G)
  \end{aligned}
\end{equation}
with $g (G) = O (|G|^2)$. By employing the curl free property \eqref{curl-free-G}, we have $\nabla \cdot G^\top = \nabla \mathrm{tr} \, G$. As a consequence, we can reformulate \eqref{MEL} with $H_{ext} \equiv 0$ as
\begin{equation}\label{MEL-reformulate}
  \left\{
    \begin{array}{l}
      \partial_t v - \nu \Delta v + \Delta \psi + \nabla q = - v \cdot \nabla v + \nabla \cdot g (G) - \nabla \cdot ( \nabla M \odot \nabla M ) \,, \\
      \nabla \cdot v = 0 \,, \\
      \partial_t \psi + v = - v \cdot \nabla \psi \,, \\
      \partial_t M + v \cdot \nabla M = \Delta M + |\nabla M|^2 M - M \times \Delta M \,,\\
      |M| = 1 \,,
    \end{array}
  \right.
\end{equation}
where $q = p + \mathrm{tr} \, G$. As shown in Section 1.2 of \cite{Lin-Zhang-2008-CPAM}, we easily know that the $(v, \psi, M)$-system \eqref{MEL-reformulate} is equivalent to \eqref{MEL} with vanishing external magnetic field $H_{ext}$. We point out that the elastic stress $F F^\top$ will lead to a damping term $\Delta \psi$ under the relation \eqref{Gradient-psi}. To be more specified, the evolution of $\psi$ in \eqref{MEL-reformulate} reduces to
\begin{equation}
  \begin{aligned}
    \partial_t \Delta \psi + \tfrac{1}{\nu} \Delta \psi + \tfrac{1}{\nu} \Delta ( \nu v - \psi ) = - \Delta ( v \cdot \nabla \psi ) \,,
  \end{aligned}
\end{equation}
where $\tfrac{1}{\nu} \Delta \psi$ is just what we required. However, there is a new quantity $ \tfrac{1}{\nu} \Delta ( \nu v - \psi ) $ to be controlled. Fortunately, by the evolution of $v$ in \eqref{MEL-reformulate}, $w = \nu v - \psi $ subjects to a generalized Stokes system
\begin{equation}
  \left\{
    \begin{array}{l}
      - \Delta w + \nabla q = - \partial_t v - v \cdot \nabla v + \nabla \cdot g (G) - \nabla \cdot ( \nabla M \odot \nabla M ) \,, \\
      \nabla \cdot w = - \nabla \cdot \psi \,.
    \end{array}
  \right.
\end{equation}
Then Lemma \ref{Lmm-General-Stokes} will help us to dominate the quantity $ \tfrac{1}{\nu} \Delta ( \nu v - \psi ) $. What we want to emphasize is that the incompressibility \eqref{Incomprsblt-F} reduces to $\nabla \cdot \psi = O ( |G|^2 )$, which plays an essential role in deriving the global energy estimate of \eqref{MEL-reformulate}. For details, see Section \ref{Sec-Global}. As a consequence, we can establish the global existence of small solutions to the Cauchy problem \eqref{MEL}-\eqref{IC-MEL} with the vanishing external magnetic field $H_{ext}$.

\subsection{Notations and main results} Before presenting the main results in this paper, we first introduce the notations used throughout the current work. For convention, we denote the usual $L^p (\mathbb{R}^d) $ space by $L^p$ for $p \in [ 1 , \infty ]$, where $d = 2,3$. More precisely, $\| f \|_{L^p} = \big( \int_{\mathbb{R}^d} |f|^p \d x \big)^\frac{1}{p}$ if $p \in [ 1 , \infty )$ and $\| f \|_{L^\infty} = \mathrm{ess \ sup}_{x \in \mathbb{R}^d} |f (x)|$. For $p = 2$, we use the notations $\langle \cdot , \cdot \rangle $ to represent the inner product on the Hilbert space $L^2$.

In this paper, $\nabla$ stands for the gradient operator, $\nabla \cdot$ denotes the divergence operator and $\partial_i$ means $\partial_{x_i}$ for all $i = 1, 2, \cdots, d$. The symbol $\Delta = \partial_i \partial_i$ is the Laplacian operator. For any matrix-valued function $G = G^{ij}$, $(\nabla \cdot G)_i = \partial_j G^{ji}$ for all $1 \leq i \leq d$. For any multi-indexes $m = (m_1, m_2, \cdots, m_d) \in \mathbb{N}^d$, we denote the $m^{th}$ partial derivative by
\begin{equation*}
  \begin{aligned}
    \partial^m = \partial^{m_1}_{x_1} \partial^{m_2}_{x_2} \cdots \partial^{m_d}_{x_d} \,.
  \end{aligned}
\end{equation*}
If each component of $m \in \mathbb{N}^d$ is not greater that that of $\widetilde{m}$'s, we denote by $m \leq \widetilde{m}$. The symbol $m < \widetilde{m}$ means $m \leq \widetilde{m}$ and $|m| < |\widetilde{m}|$, where $|m| = m_1 + m_2 + \cdots + m_d$. We denote the Sobolev space $W^{s,p} = W^{s,p} (\mathbb{R}^d)$ by the norm
\begin{equation*}
  \begin{aligned}
    \| f \|_{W^{s, p}} = \Big( \sum_{|m| \leq s} \int_{\mathbb{R}^d} |\partial^m f (x)|^p \d x \Big)^\frac{1}{p} \,.
  \end{aligned}
\end{equation*}
If $p = 2$, we denote by the Hilbert space $H^s = H^s (\mathbb{R}^d) : = W^{s,2}$.

In this paper, we will prove the local well-posedness for the evolution model for the magnetoelastic system \eqref{MEL} with initial data \eqref{IC-MEL}. Moreover, we will justify the global existence under small size of initial data with vanishing external magnetic field $H_{ext}$. Now we precisely state our main theorems as follows:
\begin{theorem}[Local well-posedness]\label{Thm-1}
Let the integer $s \geq 2$ and $d = 2$ or $3$. Given the external magnetic field $H_{ext} (t,x) \in L^\infty (\mathbb{R}^+ ; H^s (\mathbb{R}^d))$ and the initial data $(v_0 (x), F_0 (x) , M_0 (x))\in \mathbb{R}^d \times \mathbb{R}^{d \times d} \times \mathbb{S}^{d-1}$ satisfying $ v_0, F_0 , \nabla M_0 \in H^s (\mathbb{R}^d) $, if the initial energy
$$\mathcal{E}_0 : = \| v_0 \|_{H^s}^2 + \| F_0 \|^2_{H^s} + \| \nabla M_0 \|^2_{H^s} < \infty \,,$$
then there exists $T > 0$, depending only on $ \mathcal{E}_0, H_{ext}$, such that the Cauchy problem \eqref{MEL}-\eqref{IC-MEL} admits a unique solution $(v,F,M)$ satisfying
\begin{equation*}
  \begin{aligned}
    v \in L^\infty(0,T;H^s(\mathbb{R}^d)) \cap L^2(0,T;H^{s+1}(\mathbb{R}^d)) \,, F \in L^\infty(0,T;H^s(\mathbb{R}^d)) \,, \nabla M \in L^\infty(0,T;H^s(\mathbb{R}^d)) \,.
  \end{aligned}
\end{equation*}
Moreover, the solution $(v,F,M)$ obeys the energy bound
\begin{equation*}
  \begin{aligned}
    \sup_{t \in [0,T]} & \big{(} \| v \|^2_{H^s} +  \| F \|^2_{H^s} + \| \nabla M \|^2_{H^s} \big{)} + \int_0^T \big{(} \nu \| \nabla v \|^2_{H^s} + \| \Delta M \|^2_{H^s}  \big{)} \d t \leq C \,,
  \end{aligned}
\end{equation*}
where the positive constant C depends only on $\mathcal{E}_0$, $H_{ext}$ and $T$.
\end{theorem}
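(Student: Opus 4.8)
\emph{Overall strategy.} The plan is to first derive closed \emph{a priori} estimates for smooth solutions of \eqref{MEL}, then build an actual solution by a linearized iteration in the spirit of \cite{Jiang-Luo-2019-SIAM}, and finally recover the geometric constraint $|M|=1$ from Lemma~\ref{Lmm-|M|=1}; the point of this last step is that $|M|=1$ need not be imposed during the construction at all, since with $\Gamma(M)$ of the form \eqref{Lagrangian-Multiplier} and $|M_0|=1$ it is automatically propagated. For the iteration, starting from $(v^0,F^0,M^0)\equiv(v_0,F_0,M_0)$ one freezes the transport and nonlinear coefficients at level $n$ and solves for $(v^{n+1},F^{n+1},M^{n+1})$: a linear Oseen--Stokes system
\[
\partial_t v^{n+1}-\nu\Delta v^{n+1}+\nabla p^{n+1}=-v^n\cdot\nabla v^{n+1}-\nabla\cdot(\nabla M^n\odot\nabla M^n)+\nabla\cdot\big(F^n(F^n)^\top\big)+(\nabla H_{ext})^\top M^n,\quad \nabla\cdot v^{n+1}=0,
\]
a linear transport equation $\partial_t F^{n+1}+v^n\cdot\nabla F^{n+1}=\nabla v^n\,F^{n+1}$ (using the same divergence-free $v^n$ in both terms even forces $\det F^{n+1}\equiv\det F_0=1$ along the flow of $v^n$), and a linear parabolic system $\partial_t M^{n+1}+v^n\cdot\nabla M^{n+1}=\Delta M^{n+1}+H_{ext}+\Gamma(M^n)M^{n+1}-M^n\times(\Delta M^{n+1}+H_{ext})$, whose principal part $(I-M^n\times)\Delta$ is uniformly coercive because $M^n\times(\cdot)$ is skew-symmetric. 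Each linear problem is solvable on a common short interval by standard linear theory.

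\emph{A priori estimates.} For smooth solutions of \eqref{MEL} the quantities
\[
\mathcal{E}(t)=\|v\|_{H^s}^2+\|F\|_{H^s}^2+\|\nabla M\|_{H^s}^2,\qquad \mathcal{D}(t)=\nu\|\nabla v\|_{H^s}^2+\|\Delta M\|_{H^s}^2
\]
are controlled as follows. For each $|m|\le s$ apply $\partial^m$ to the equations and test $\partial^m v$ against the momentum equation, $\partial^m F$ against the $F$-equation, and $-\Delta\partial^m M$ against the $M$-equation; the pressure and the transport self-interactions drop by $\nabla\cdot v=0$, and $M\times\Delta M\perp\Delta M$. The only top-order contributions are the elastic stress $\nabla\cdot(FF^\top)$ against the stretching term $\nabla v\,F$, and the Ericksen stress $\nabla\cdot(\nabla M\odot\nabla M)$ against the convection $v\cdot\nabla M$; by (the $\partial^m$-versions of) \eqref{Cancellation-F} and \eqref{Cancellation-M} these cancel at top order, leaving commutators $[\partial^m,v\cdot\nabla]v$, $[\partial^m,\nabla v]F$, $[\partial^m,v\cdot\nabla]M$ and subprincipal stress corrections, all absorbed into $\mathcal{E}$ plus an $\varepsilon$-fraction of $\mathcal{D}$ by Moser product estimates and the embeddings $H^s\hookrightarrow L^\infty\cap W^{1,\infty}$, valid since $s\ge2$ and $d\le3$; the cubic term $|\nabla M|^2M$ from $\Gamma$ is handled exactly as in liquid-crystal theory, and the $H_{ext}$-dependent terms give only a forcing $\lesssim(1+\|H_{ext}\|_{H^s}^2)(1+\mathcal{E})$. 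This yields $\tfrac{\d}{\d t}\mathcal{E}+\mathcal{D}\le C\,P(\mathcal{E})+C(1+\|H_{ext}\|_{L^\infty_tH^s}^2)$ for a polynomial $P$, hence a bound for $\sup_{[0,T]}\mathcal{E}+\int_0^T\mathcal{D}\,\d t$ on an interval $[0,T]$ with $T$ depending only on $\mathcal{E}_0$ and $H_{ext}$.

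\emph{Existence, uniqueness, constraint.} One checks that the iterates $(v^n,F^n,M^n)$ obey the same bound uniformly in $n$, by a parallel argument: the stress terms $\nabla\cdot(\nabla M^n\odot\nabla M^n)$ and $\nabla\cdot(F^n(F^n)^\top)$ in the $v^{n+1}$-equation carry one fewer effective derivative than $\nabla v^{n+1}$ and are absorbed by the parabolic dissipation, while the stretching term in the $F^{n+1}$-equation is handled by the short-time smallness of $\int_0^T\|\nabla v^n\|_{H^s}\,\d t$, so no derivative loss occurs after possibly shrinking $T$. Then the differences $(\delta v,\delta F,\delta M)=(v^{n+1}-v^n,F^{n+1}-F^n,M^{n+1}-M^n)$ are shown to be Cauchy in the lower-order space $L^\infty_tL^2\cap L^2_tH^1$ for $v$ and $M$ and $L^\infty_tL^2$ for $F$ — integrating by parts in the differences of the quadratic stresses to trade a derivative, and using the uniform $H^s$ bounds for the coefficients — which, after shrinking $T$, is even a contraction. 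The limit inherits $v\in L^\infty_tH^s\cap L^2_tH^{s+1}$, $F\in L^\infty_tH^s$, $\nabla M\in L^\infty_tH^s$ from weak-$*$ compactness and lower semicontinuity, solves \eqref{MEL}--\eqref{IC-MEL}, and obeys the stated energy bound. Uniqueness follows from the same difference estimate applied to two solutions together with Gronwall, and $|M(t,x)|=1$ on $[0,T]$ follows from Lemma~\ref{Lmm-|M|=1}.

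\emph{Main obstacle.} The delicate point is to close the $H^s$ estimate without loss of derivatives: $F$ carries no dissipation and the $M$-equation gains only two derivatives, so the estimate can close only because $\nabla\cdot(FF^\top)$ in the momentum equation is cancelled by the stretching term $\nabla v\,F$ and $\nabla\cdot(\nabla M\odot\nabla M)$ is cancelled by the convection $v\cdot\nabla M$; after applying $\partial^m$ one must verify that the mismatch is a sum of genuinely lower-order commutators rather than a surviving top-order term, and that this cancellation is not spoiled by the skew term $M\times\Delta M$ or by the additional $H_{ext}$-dependent terms in $\Gamma(M)$. The geometric constraint itself costs essentially nothing once Lemma~\ref{Lmm-|M|=1} is available.
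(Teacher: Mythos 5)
Your \emph{a priori} estimate is essentially the paper's Proposition~\ref{Prop-AE1}: same cancellations, same commutator structure, same use of $|M|=1$ in the $\Gamma(M)M$ term. Where you diverge --- and where there is a genuine gap --- is in the design of the iteration scheme.

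The paper's scheme \eqref{MEL-app} is deliberately \emph{not} fully linearized: the $M^{n+1}$-equation keeps $M^{n+1}$ inside both $\Gamma(\cdot)$ and the cross product, so $M^{n+1}$ solves the full LLG equation with only the transport velocity frozen at $v^n$. This is the whole point of Section~\ref{Sec: Local-LLG-Given-v}: solving that ``LLG with given velocity'' subproblem (Proposition~\ref{Prop-Local-LLG}) guarantees, via Lemma~\ref{Lmm-|M|=1}, that $|M^{n+1}|=1$ at every iteration step, which is then used in Lemma~\ref{Lmm-Lower-Bnd-T} to bound $\|M^{n+1}\|_{L^\infty}$ trivially and to keep the cubic term $\Gamma(M^{n+1})M^{n+1}$ tractable in the uniform estimate. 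You instead linearize by writing $\Gamma(M^n)M^{n+1}$ and $M^n\times(\cdot)$, which makes each step a standard linear parabolic problem but destroys the constraint propagation: $|M^{n+1}|$ will generically differ from $1$, so $\|M^{n+1}\|_{L^\infty}$ must be controlled by some other mechanism. Your claim that the iterates obey ``the same bound uniformly in $n$, by a parallel argument'' glosses over exactly this: the a priori estimate you cite uses $|M|=1$, and your iterates do not satisfy it. Without that, $\Gamma(M^n)$ acts as an unsigned potential of size $\|\nabla M^n\|_{L^\infty}^2+\|M^n\|_{L^\infty}\|H_{ext}\|_{L^\infty}$, and controlling $\|M^{n+1}\|_{L^\infty}$ uniformly requires, for example, the device the paper uses \emph{inside} the proof of Proposition~\ref{Prop-Local-LLG} (adding $\|M^\eps-\J_\eps M_0\|_{L^2}^2$ to the approximate energy and exploiting that it vanishes at $t=0$, cf.\ \eqref{M-L-infty}--\eqref{higher-order-energy}), which you do not invoke at the iteration level. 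Until that $L^\infty$-control is supplied, the uniform bound and hence the contraction/compactness step do not close. Your remaining deviations from the paper's scheme --- Oseen drift $v^n\cdot\nabla v^{n+1}$ instead of $v^n\cdot\nabla v^n$, and transport of $F^{n+1}$ instead of $F^n$ (which, as you note, has the nice side effect $\det F^{n+1}\equiv 1$) --- are harmless variants. Applying Lemma~\ref{Lmm-|M|=1} to the \emph{limit} to recover $|M|=1$ is correct; the issue is entirely with the uniform bounds along the way.
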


\begin{theorem}[Global well-posedness]\label{Thm-Global}
	Under the assumptions of Theorem \ref{Thm-1}, we assume further that $H_{ext} = 0$ and $G_0 = F_0^{-1} - I$ satisfies \eqref{curl-free-G} and
	\begin{equation}\label{IC-small-size}
	  \begin{aligned}
	    \det F_0 = 1 \,, \quad \| v_0 \|^2_{H^s} + \| F_0 - I \|^2_{H^s} + \| \nabla M_0 \|^2_{H^s} \leq \eps_0
	  \end{aligned}
	\end{equation}
	for some $\eps_0 > 0$ sufficiently small. Then the solution $(v,F,M)$ constructed in Theorem \ref{Thm-1} is global with $ v \in L^\infty (\mathbb{R}^+; H^s (\mathbb{R}^d)) $, $\nabla v \in L^2 (\mathbb{R}^+; H^s(\mathbb{R}^d))$, $G, \nabla M \in L^\infty (\mathbb{R}^+;H^s(\mathbb{R}^d))$ and $\Delta M \in L^2 ( \mathbb{R}^+ ; H^s(\mathbb{R}^d) )$, where $G = F^{-1} - I$. Moreover, there is a $C > 0$ such that
	\begin{equation}
	  \begin{aligned}
	    \| v (t) \|^2_{H^s} + \| \nabla M (t) \|^2_{H^s} + \| G (t) \|^2_{H^s} + \| \partial_t v (t) \|^2_{H^{s-2}} + \| \partial_t G (t) \|^2_{H^{s-2}} \\
	    + \int_0^t [ \nu \| \nabla v (\tau) \|^2_{H^s} + \| \Delta M ( \tau ) \|^2_{H^s} ] \d \tau \leq C \eps_0
	  \end{aligned}
	\end{equation}
	for all $t \geq 0$.
\end{theorem}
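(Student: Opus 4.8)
\textbf{Plan of proof of Theorem~\ref{Thm-Global}.}
The plan is to run a continuation argument on the equivalent reformulated system \eqref{MEL-reformulate} (with $H_{ext}\equiv0$). By Theorem~\ref{Thm-1} a unique solution exists on a maximal interval $[0,T^{*})$; along it \eqref{G-F-relation} transfers Sobolev estimates between $F-I$ and $G=F^{-1}-I$, Lemma~\ref{Lmm-|M|=1} keeps $|M|=1$, and $G(t)$ inherits the curl-free structure \eqref{curl-free-G}, so the potential $\psi$ of \eqref{Gradient-psi} is well defined with $\|\nabla\psi\|_{H^s}=\|G\|_{H^s}$ and $\nabla\cdot\psi=\mathrm{tr}\,G=O(|G|^2)$ by \eqref{Incomprsblt-F}. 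I would take
\begin{equation*}
  \mathcal{E}(t)=\|v\|_{H^s}^2+\|\nabla M\|_{H^s}^2+\|G\|_{H^s}^2+\|\partial_t v\|_{H^{s-2}}^2+\|\partial_t G\|_{H^{s-2}}^2,
\end{equation*}
let $\mathcal{D}(t)=\nu\|\nabla v\|_{H^s}^2+\|\Delta M\|_{H^s}^2+\|\nabla G\|_{H^{s-1}}^2+\nu\|\nabla\partial_t v\|_{H^{s-2}}^2$, and construct an auxiliary functional $\mathcal{F}\simeq\mathcal{E}$ (obtained from $\mathcal{E}$ by adding small multiples of cross terms) for which
\begin{equation*}
  \tfrac{d}{dt}\mathcal{F}+c\,\mathcal{D}\le C\,(\sqrt{\mathcal{E}}+\mathcal{E})\,\mathcal{D}
\end{equation*}
on $[0,T^{*})$. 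Since $\mathcal{E}(0)\le C'\eps_0$ by \eqref{IC-small-size} and \eqref{G-F-relation} (the initial time-derivatives being read off from the equations and from the Poisson equation for $q=p+\mathrm{tr}\,G$, whose right-hand side is quadratically small), and choosing $\eps_0$ so small that $C(\sqrt{\mathcal{E}}+\mathcal{E})\le c/2$ whenever $\mathcal{E}\le2C'\eps_0$, a standard continuity argument gives $\mathcal{E}(t)\le2C'\eps_0$ and $\int_0^{T^{*}}\mathcal{D}\,\d t<\infty$. The blow-up criterion implicit in Theorem~\ref{Thm-1} then forces $T^{*}=\infty$ and the stated bound follows; uniqueness is inherited from Theorem~\ref{Thm-1}.

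The first ingredient is the basic energy estimate: for each $|m|\le s$ apply $\partial^m$ to the $v$-, $M$- and $G$-equations of \eqref{MEL-reformulate} and test against $\partial^m v$, $-\partial^m\Delta M$ and $\partial^m G$ respectively. The genuinely linear couplings cancel at every order — $\langle\partial^m\Delta\psi,\partial^m v\rangle=-\langle\partial^m G,\partial^m\nabla v\rangle$ kills the $\langle\partial^m\nabla v,\partial^m G\rangle$ coming from the $\nabla v$ term of the $G$-equation (a differentiated form of \eqref{Cancellation-F}), while $\langle\partial^m\nabla\cdot(\nabla M\odot\nabla M),\partial^m v\rangle-\langle\partial^m(v\cdot\nabla M),\partial^m\Delta M\rangle$ reduces to commutators by \eqref{Cancellation-M}. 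Since $H^s$ is a Banach algebra ($s\ge2>d/2$ for $d=2,3$), every surviving term is at least quadratic in the small quantities, and a Gagliardo--Nirenberg splitting lets one keep one factor as a smallness factor $\sqrt{\mathcal{E}}$ while moving the rest into $\mathcal{D}$ — in particular the undamped low mode $\|G\|_{L^2}$ only ever enters this way, e.g.\ $\|G\|_{L^4}^2\|\nabla v\|_{L^2}\lesssim\|G\|_{L^2}\|\nabla G\|_{L^2}\|\nabla v\|_{L^2}$. This gives
\begin{equation*}
  \tfrac12\tfrac{d}{dt}\big(\|v\|_{H^s}^2+\|\nabla M\|_{H^s}^2+\|G\|_{H^s}^2\big)+\nu\|\nabla v\|_{H^s}^2+\|\Delta M\|_{H^s}^2\le C(\sqrt{\mathcal{E}}+\mathcal{E})\,\mathcal{D},
\end{equation*}
so the only part of $\mathcal{D}$ not yet accounted for is the elastic damping $\|\nabla G\|_{H^{s-1}}^2$.

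The core step is to produce that damping. Using $\partial_t\psi=-v-v\cdot\nabla\psi$ and the $v$-equation, with $w:=\nu v-\psi$ one has
\begin{equation*}
  \partial_t\Delta\psi+\tfrac1\nu\Delta\psi+\tfrac1\nu\Delta w=-\Delta(v\cdot\nabla\psi),
\end{equation*}
while $w$ solves the generalized Stokes system with source $-\partial_t v-v\cdot\nabla v+\nabla\cdot g(G)-\nabla\cdot(\nabla M\odot\nabla M)$ and divergence $\nabla\cdot w=-\mathrm{tr}\,G=O(|G|^2)$. Testing $\partial^m$ of the first identity against $\partial^m\Delta\psi$ for $|m|\le s-1$, together with the whole-space identity $\|\partial^m\Delta\psi\|_{L^2}=\|\nabla\partial^m G\|_{L^2}$, produces the damping $\tfrac1\nu\|\nabla G\|_{H^{s-1}}^2$; the coupling $\tfrac1\nu\langle\partial^m\Delta w,\partial^m\Delta\psi\rangle$ is split by Young and its $\|\Delta w\|$-part estimated by the generalized Stokes estimate (Lemma~\ref{Lmm-General-Stokes}) — since $\nabla\cdot w$, $g(G)$ and $\nabla M\odot\nabla M$ are quadratically small, this gives $\|\Delta w\|_{H^{s-1}}\lesssim\|\partial_t v\|_{H^{s-1}}+(\text{quadratically small})$, which is exactly why smallness of $\partial_t v$ (and, to close the coupled time-differentiated system, of $\partial_t G$) in $H^{s-2}$ is carried along: these are obtained by differentiating the $v$- and $G$-equations in $t$ and testing in $H^{s-2}$, the only borderline term $\langle\Delta v,\partial_t v\rangle_{H^{s-2}}=-\tfrac12\tfrac{d}{dt}\|\nabla v\|_{H^{s-2}}^2$ being a perfect time derivative while the pressure drops out by $\nabla\cdot v=0$. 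The incompressibility in the form $\nabla\cdot\psi=\mathrm{tr}\,G=O(|G|^2)$ is essential here: without it the source $\nabla\cdot w$ of the Stokes system would be only first order in $G$ and its contribution could not be absorbed. Finally, adding small multiples $\delta$ of the $\Delta\psi$-estimate and of the time-differentiated estimates to the basic energy estimate, and applying Young's inequality to the residual couplings among $\Delta w$, $\Delta\psi$, $\partial_t v$ and $\nabla v$ (choosing the Young parameters small, then $\delta$ smaller still, so that $\mathcal{F}\simeq\mathcal{E}$ and the $O(1)$-coefficient contributions are absorbed into the left-hand $\mathcal{D}$), yields the closed inequality of the first paragraph, and the continuity argument concludes.

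I expect the core step to be the main obstacle. The Stokes estimate for $w$ feeds on $\partial_t v$; the estimate for $\partial_t v$ feeds back on $\Delta\psi\sim\nabla G$ and on the pressure; and the $\Delta\psi$-estimate feeds back on $w$ — so the whole loop must be weighted and ordered so that every appearance of $G$, $\partial_t v$ or $\partial_t G$ on a right-hand side is paid for either by the dissipation $\|\nabla G\|_{H^{s-1}}^2+\nu\|\nabla v\|_{H^s}^2$ or by the bootstrap smallness $\mathcal{E}\le2C'\eps_0$, with no leftover term proportional to the \emph{undamped} low mode $\|G\|_{L^2}$ failing to carry such a factor. Once this architecture is in place, the remaining quadratic and cubic estimates are routine applications of the $H^s$ algebra property, Gagliardo--Nirenberg and Sobolev embedding in $d=2,3$.
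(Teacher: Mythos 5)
Your overall architecture is essentially the paper's: pass to the reformulated $(v,\psi,M)$ system, extract elastic damping from $w=\nu v-\psi$ via the generalized Stokes estimate (Lemma~\ref{Lmm-General-Stokes}) together with the key cancellation $\nabla\cdot\psi=\mathrm{tr}\,G=O(|G|^2)$, add the time-differentiated estimates in $H^{s-2}$ so that the cross term $\langle\Delta\partial_t\psi,\partial_t v\rangle$ cancels, and close by a $\delta$-weighted combination plus a bootstrap continuity argument. That is exactly the paper's Proposition~\ref{Prop-Global-Est} and the proof that follows it.

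The one place where you depart from the paper's implementation, and where some extra work hides, is the $G$-dissipation. You propose testing $\partial^m$ of the second-order equation $\partial_t\Delta\psi+\tfrac1\nu\Delta\psi+\tfrac1\nu\Delta w=-\Delta(v\cdot\nabla\psi)$ against $\partial^m\Delta\psi$ for $|m|\le s-1$, which only yields $\tfrac1\nu\|\Delta\psi\|^2_{H^{s-1}}=\tfrac1\nu\|\nabla G\|^2_{H^{s-1}}$ — the $L^2$ low mode of $G$ is left undamped, as you note. The paper instead tests $\nabla\partial^m$ of the \emph{first-order} equation $\partial_t\psi+v+v\cdot\nabla\psi=0$ against $\nabla\partial^m\psi$ for $|m|\le s$, and only then substitutes $v=\tfrac1\nu(\psi+w)$; this produces the full damping $\tfrac1\nu\|\nabla\psi\|^2_{H^s}=\tfrac1\nu\|G\|^2_{H^s}$ at no extra cost, so that in \emph{every} nonlinear term one can pair $\|G\|_{H^s}\|\nabla v\|_{H^s}\lesssim\mathbb{D}_s$ directly by Cauchy–Schwarz on the dissipation, with the remaining factor of $\|G\|_{H^s}$ or $\|\nabla M\|_{H^s}$ read as $\sqrt{\mathbb{E}_s}$. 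With your weaker dissipation you must instead Gagliardo–Nirenberg-split the low mode off by hand in each of the roughly dozen commutator, $g(G)$ and $\nabla M\odot\nabla M$ terms, and in dimension $3$ the exponents do not always split as cleanly as your single $d=2$ example $\|G\|_{L^4}^2\lesssim\|G\|_{L^2}\|\nabla G\|_{L^2}$ suggests (one ends up with things like $\|G\|_{L^2}^{1/2}\|\nabla G\|_{L^2}^{3/2}$ and has to peel a $\|G\|_{H^1}$-factor out of the interpolated product). It is likely doable, but it is a real burden that the paper avoids entirely by its choice of testing; you would need to verify it term by term, and your statement that the low mode ``only ever enters this way'' is asserted rather than shown. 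One smaller imprecision: the ``perfect time derivative'' term you mention, $\langle\Delta v,\partial_t v\rangle_{H^{s-2}}$, does not actually occur if, as the energy/dissipation ansatz requires, one differentiates the $v$-equation in $t$ and tests against $\partial_t v$; that gives the damping $\nu\|\nabla\partial_t v\|^2_{H^{s-2}}$ and the coupling $\langle\Delta\partial_t\psi,\partial_t v\rangle$, which is what must cancel against the $\partial_t\psi$-estimate.
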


We now sketch the main ideas of the proof of the above theorems. When proving the local well-posedness result with large initial data, the geometric constraint $|M|=1$ will be difficult to be held in the construction of the approximate system. However, inspired by \cite{Jiang-Luo-2019-SIAM}, this constraint should be set initially and then will be satisfied during the evolution if the solutions are smooth. More precisely, we consider $h(x,t)=\mid M(x,t) \mid^{2}-1$. It satisfies a nonlinear parabolic equation with a given velocity field $v$ under the vanishing initial data $h(0,x) = 0$, which implies that $h(t,x) = 0$ holds for all $t$ by employing the Gr\"onwall inequality. So, we first construct the local solutions to the LLC equation with a given background velocity $v$ by the mollifying approximate, refer to \eqref{ch5-3}. In the mollifying approximate system \eqref{ch5-3}, the constraint $M^\eps \in \mathbb{S}^{d-1}$ does not hold as well as $\mathcal{J}_\eps M_0$. When we derive the uniform energy bound to \eqref{ch5-3}, we need to control the quantity $\| M^\eps \|_{L^\infty}$ by applying the equality \eqref{M-L-infty}, i.e.,
\begin{equation*}
  \begin{aligned}
    \| M^\eps \|_{L^\infty} \leq \| M^{\varepsilon}-\J_\eps M^{in}\|_{L  ^{2}}+C\|\nabla M^{\varepsilon}\|_{H^{N}}+C\| M^{in}\|_{H^{N}}+1 \,.
  \end{aligned}
\end{equation*}
We thereby naturally design the quantity $ \| M^\eps - \mathcal{J}_\eps M_0 \|_{L^2} $, which initially vanishes, as a part of the approximate energy. In fact, our approximate energy is $ E_\eps (t) = \| \nabla M^\eps \|^2_{H^s} + \| M^\eps - \mathcal{J}_\eps M_0 \|^2_{L^2} $ with $E_\eps (0) = \| \nabla \mathcal{J}_\eps M_0 \|^2_{H^s} \leq \| \nabla M_0 \|^2_{H^s}$. Then, combining Lemma \ref{Lmm-|M|=1}, we can derive the local existence result of the LLG equation with a given background velocity $v$.

Next we carefully design a nonlinear iteration scheme to construct the approximate solutions $(v^n, F^n, M^n)$, where the nonlinearity is due to keeping the constraint $| M^n | = 1$: solve the Stokes type equation of $v^{n+1}$ in terms of $v^n$, $F^n$ and $M^n$, solve the linear ODE type equation of $F^{n+1}$ in terms of $v^n$ and $F^n$, and solve the heat flow type equation of $M^{n+1}$ in terms of $v^n$ and $M^n$. We want emphasize that the key point of this construction is that the geometric constraint $ M^n \in \mathbb{S}^{d-1}$ will not cause any extra difficulty. We can thereby derive the uniform energy estimate by using $|M^n| = 1$, which will greatly simplify the process. Consequently, the existence of the local-in-times smooth solutions can be proved.

To prove the global small classical solution to \eqref{MEL} with $H_{ext}=0$, we only need to derive the global uniform energy bound of the reformulated system \eqref{MEL-reformulate}. In order to seek a dissipative mechanism of $\psi$, we are required to control the quantity $w = \nu v - \psi$ by employing Lemma \ref{Lmm-General-Stokes} and the key structure \eqref{Key-Structure}. However, there is an extra norm $\| \partial_t v \|_{H^{s-2}}$ is uncontrolled. Considering the coupling of $v$ and $\psi$ in \eqref{MEL-reformulate}, we have to additionally estimate the norms $\| \partial_t v \|_{H^{s-2}}$ and $\| \partial_t \psi \|_{H^{s-2}}$ as in \eqref{norm-vt-Hs-2} and \eqref{norm-nabla-psi-t-Hs-2-1} respectively.

The organization of current paper is as follow: in the next section, we provide an a priori estimate of the system \eqref{MEL}. In Section \ref{Sec: Local-LLG-Given-v}, we first justify the relation between the Lagrangian multiplier $\Gamma (M)$ and $|M|=1$, which guarantees that the condition of unit length of $M$ needs only be imposed initially. Then we show the local existence of LLG equation for $M$ with a given velocity $v$
, which will be applied to construct the iterative approximate equations of \eqref{MEL}-\eqref{IC-MEL}. In Section \ref{Sec: Local-Result}, we prove the local well-posedness of \eqref{MEL} with large initial data by deriving uniform energy bounds of the iterative approximate system \eqref{MEL-app}. In Section \ref{Sec-Global}, by employing the equivalent reformulation \eqref{MEL-reformulate} of \eqref{MEL}, we globally extend the solution of \eqref{MEL}-\eqref{IC-MEL} constructed in Section \ref{Sec: Local-Result} under the small initial energy condition \eqref{IC-small-size}.

\section{A priori estimate}\label{A_Priori}

In this section, the {\em a priori} estimate of the system \eqref{MEL} will be accurately derived from employing the energy method. There are two key cancellations \eqref{Cancellation-M} (between $v$ and $M$) and \eqref{Cancellation-F} (between $v$ and $F$) will play an essential role in deriving the energy estimate. These cancellations will present the forms
\begin{equation}\label{Cancellation-M-H}
  \begin{aligned}
    & \big\langle \partial^m \nabla \cdot ( \nabla M \odot \nabla M ) , \partial^m v \big\rangle - \big\langle \partial^m ( v \cdot \nabla M ) , \Delta \partial^m M \big\rangle \\
    = & \sum_{0 \neq m' \leq m} C_m^{m'} \Big[ \big\langle  \partial^m v \cdot \nabla \partial^{m'} M , \Delta \partial^{m-m'} M \big\rangle - \big\langle \partial^{m-m'} v \cdot \nabla \partial^{m'} M , \Delta \partial^m M \big\rangle \Big]
  \end{aligned}
\end{equation}
and
\begin{equation}\label{Cancellation-F-H}
  \begin{aligned}
    & \big\langle \partial^m \nabla \cdot ( F F^\top ) , \partial^m v \big\rangle + \big\langle \partial^m ( \nabla v F ) , \partial^m F \big\rangle \\
    = & \sum_{0 \neq m' \leq m} C_m^{m'} \Big[ \big\langle \nabla \partial^{m-m'} v \partial^{m'} F , \partial^m F \big\rangle - \big\langle \partial^{m-m'} F \partial^{m'} F^\top , \nabla \partial^m v \big\rangle \Big]
  \end{aligned}
\end{equation}
for all multi-indexes $m \in \mathbb{N}^d$ with $|m| \geq 1$, where the terms of right-hand side in \eqref{Cancellation-M-H} and \eqref{Cancellation-F-H} are lower order derivatives to be easily controlled. We now introduce the following energy functional $\mathcal{E}_s (t)$ and energy dissipative rate functional $\mathcal{D}_s (t)$:
\begin{equation*}
  \begin{aligned}
    \mathcal{E}_s (t) & := \| v \|^2_{H^s} +  \| F \|^2_{H^s} + \| \nabla M \|^2_{H^s} \,, \\
    \mathcal{D}_s (t) & := \nu \| \nabla v \|^2_{H^s} + \|\Delta M\| ^{2}_{H^{s}}   \,.
  \end{aligned}
\end{equation*}
Then, we articulate the following proposition:
\begin{proposition}[A priori estimate]\label{Prop-AE1}
  Let $s \geq 2$ be any fixed integer. Assume that $(v,F,M)$ is a sufficiently smooth solution to \eqref{MEL} on the interval $[0,T]$. Then there is a positive constant $C = C( \nu, s, d, H_{ext}) > 0$, such that
  \begin{equation}\label{Ener-Bnds-Local}
    \begin{aligned}
       &\tfrac{1}{2} \tfrac{\d}{\d t} \mathcal{E}_s (t) + \mathcal{D}_s (t) \leq C \big( \mathcal{E}^{\frac{1}{2}}_s (t) +\mathcal{E}^{\frac{3}{2}}_s (t) \big) \big( 1 + \mathcal{D}^{\frac{1}{2}}_s (t) \big)
     \end{aligned}
  \end{equation}
  holds for all $t \in [0,T]$.
\end{proposition}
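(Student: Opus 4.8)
The plan is to apply $\partial^m$ to each equation of \eqref{MEL} for every multi-index $m$ with $|m| \le s$, pair with the corresponding unknown in $L^2$, and sum. For the momentum equation I would test $\partial^m(\text{first equation})$ against $\partial^m v$; for the deformation equation test $\partial^m(\text{third equation})$ against $\partial^m F$; for the magnetization equation I would test $\partial^m(\text{fourth equation})$ against $-\Delta\partial^m M$ (this is the natural pairing that produces the dissipation $\|\Delta M\|_{H^s}^2$ and meshes with the cancellation \eqref{Cancellation-M}). The viscous term gives $\nu\|\nabla\partial^m v\|_{L^2}^2$, the diffusion in the $M$-equation gives $\|\Delta\partial^m M\|_{L^2}^2$ after integration by parts, and summing over $|m|\le s$ assembles $\tfrac12\tfrac{\d}{\d t}\mathcal E_s(t) + \mathcal D_s(t)$ on the left, up to the terms that still need to be absorbed into the right-hand side.

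The heart of the argument is the cancellation structure. The transport terms $\langle \partial^m(v\cdot\nabla v),\partial^m v\rangle$ are handled by the usual commutator trick ($\nabla\cdot v = 0$ kills the top-order piece, leaving $\sum_{0\neq m'\le m} C_m^{m'}\langle \partial^{m'}v\cdot\nabla\partial^{m-m'}v,\partial^m v\rangle$). The genuinely dangerous top-order couplings are the stress term $\langle\partial^m\nabla\cdot(\nabla M\odot\nabla M),\partial^m v\rangle$ paired against $\langle\partial^m(v\cdot\nabla M),\Delta\partial^m M\rangle$, and $\langle\partial^m\nabla\cdot(FF^\top),\partial^m v\rangle$ paired against $\langle\partial^m(\nabla v F),\partial^m F\rangle$: these are exactly the quantities treated in \eqref{Cancellation-M-H} and \eqref{Cancellation-F-H}, where the worst terms cancel identically and only strictly lower-order commutators (at most $s$ derivatives total, distributed so that no factor carries more than $s-1$ derivatives where it would be uncontrollable) survive. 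I would invoke these two identities directly. The $\Gamma(M)M = (|\nabla M|^2 - M\cdot H_{ext})M$ term, the $H_{ext}$ terms, and the gyroscopic term $-M\times(\Delta M + H_{ext})$ (which is orthogonal to $M$ but not to $\Delta\partial^m M$ after differentiation) all produce lower-order contributions; here the constraint $|M|=1$ (hence $M\cdot\partial_i M = 0$, $|M|_{L^\infty}=1$) is used to keep $\|M\|_{L^\infty}$ under control without extra energy.

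After the cancellations, every remaining term is an integral of a product of three or four factors, each a derivative of $v$, $F$, or $M$ of order $\le s$, $\le s$, or $\le s+1$ respectively (counting $\nabla M$ as the base quantity of the $M$-energy). I would estimate these by Hölder, distributing $L^\infty$ onto the lowest-derivative factor via Gagliardo–Nirenberg/Sobolev embedding $H^2(\mathbb R^d)\hookrightarrow L^\infty$ for $d=2,3$ (this is where $s\ge2$ enters), and Sobolev interpolation $\|\partial^k f\|_{L^p}\lesssim \|f\|_{H^s}^{1-\theta}\|\nabla f\|_{H^s}^{\theta}$ to route half-derivatives' worth of each factor into the dissipation. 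Counting powers: each such term carries a total that is at least cubic in the basic norms — one factor of $\mathcal E_s^{1/2}$ or $\mathcal D_s^{1/2}$ per field — and a careful bookkeeping shows it is bounded by $C(\mathcal E_s^{1/2} + \mathcal E_s^{3/2})(1 + \mathcal D_s^{1/2})$, the cubic term $\mathcal E_s^{3/2}$ arising from trilinear terms like the $|\nabla M|^2 M$ contribution and the $\nabla M\odot\nabla M$ commutators, and the factor $(1+\mathcal D_s^{1/2})$ reflecting that at most one derivative beyond the energy level can be absorbed into $\nu\|\nabla v\|_{H^s}^2$ or $\|\Delta M\|_{H^s}^2$. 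The linear-in-$H_{ext}$ terms account for the lone $\mathcal E_s^{1/2}$. The main obstacle is the precise derivative accounting in the $M$-equation: the pairing with $-\Delta\partial^m M$ generates a term $\langle\partial^m(|\nabla M|^2 M),\Delta\partial^m M\rangle$ in which, when $m'=m$ hits one $\nabla M$ factor, one has a factor $\nabla\partial^m M$ ($s+1$ derivatives) times $\nabla M$ times $M$ times $\Delta\partial^m M$ ($s+2$ derivatives) — this must be reorganized (integrating by parts, or using $M\cdot\nabla M=0$ to rewrite $|\nabla M|^2 M$ more favorably) so that it closes as $\mathcal E_s^{1/2}\mathcal D_s$ rather than something worse; handling this term and the analogous top-order piece of the gyroscopic term is the delicate point of the proof.
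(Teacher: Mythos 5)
Your strategy matches the paper's proof exactly: the same multipliers ($\partial^m v$, $\partial^m F$, and $-\Delta\partial^m M$), the same invocation of the cancellation identities \eqref{Cancellation-M-H} and \eqref{Cancellation-F-H}, the same use of $|M|=1$ to control $\|M\|_{L^\infty}$, and the same H\"older/Sobolev estimation of the commutator remainders. The overall structure is correct.

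However, the two points you single out at the end as ``the delicate point of the proof'' are both misdiagnosed, and the fix you propose is aimed at the wrong target. Consider the $m'=m$ piece of $\langle \partial^m(|\nabla M|^2 M),\Delta\partial^m M\rangle$, which is $\langle 2(\nabla\partial^m M\cdot\nabla M)M,\Delta\partial^m M\rangle$. A direct H\"older estimate gives
\begin{equation*}
  \|\nabla\partial^m M\|_{L^2}\,\|\nabla M\|_{L^\infty}\,\|M\|_{L^\infty}\,\|\Delta\partial^m M\|_{L^2}
  \leq C\|\nabla M\|_{H^s}^2\,\|\Delta M\|_{H^s}
  \leq C\,\mathcal{E}_s(t)\,\mathcal{D}_s^{1/2}(t)\,,
\end{equation*}
using $\|\nabla M\|_{L^\infty}\lesssim\|\nabla M\|_{H^2}\leq\|\nabla M\|_{H^s}$ (here $s\geq 2$) and $\|M\|_{L^\infty}=1$; and $\mathcal{E}_s\mathcal{D}_s^{1/2}\leq(\mathcal{E}_s^{1/2}+\mathcal{E}_s^{3/2})\mathcal{D}_s^{1/2}$ is exactly of the form \eqref{Ener-Bnds-Local}. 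So no integration by parts and no rewriting via $M\cdot\nabla M=0$ is required; the paper's $C_{52}^m$ estimate \eqref{C52-m} is just this direct bound. Your stated goal of closing the term ``as $\mathcal{E}_s^{1/2}\mathcal{D}_s$'' is moreover not the right target: a full power of $\mathcal{D}_s$ on the right would require smallness of $\mathcal{E}_s$ to absorb, whereas Proposition~\ref{Prop-AE1} is a \emph{local} a~priori bound with large data, so only $\mathcal{D}_s^{1/2}$ is allowed (it is later absorbed by Young's inequality). Similarly, for the gyroscopic term $\langle\partial^m(M\times\Delta M),\Delta\partial^m M\rangle$ the relevant observation is not that $M\times\Delta M$ is orthogonal to $M$, but that the $m'=0$ (top-order) piece $\langle M\times\Delta\partial^m M,\Delta\partial^m M\rangle$ vanishes \emph{identically} by the cross-product orthogonality $(a\times b)\cdot b=0$; this is why the paper's $C_3^m$ sums only over $m'\neq 0$, and the surviving terms are estimated as $C\|\nabla M\|_{H^s}^2\|\Delta M\|_{H^s}$. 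With these two corrections your proof sketch goes through and coincides with the paper's argument.
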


\begin{proof}[Proof of Proposition \ref{Prop-AE1}]
We act $m$-order derivative on the first $v$-equation of \eqref{MEL} for all $|m| \leq s$, take $L^2$-inner product by dot with $\partial^{m}v $ and integrate by parts over $x \in \mathbb{R}^d$. We thereby have
  \begin{equation}\label{MEL_7}
     \begin{aligned}
       &\tfrac{1}{2} \tfrac{\d}{\d t} \|\partial^{m}v\|^{2}_{L^{2}}) + \langle \partial^{m}(v\cdot \nabla v),\partial^{m}v \rangle + \nu\|\nabla \partial^{m}v\|^{2}_{L^{2}} \\
       & + \langle \nabla \cdot \partial^{m}( \nabla M \odot \nabla M )- FF^{\top}),\partial^{m} v\rangle -\langle \partial^{m} (\nabla H_{ext})^{\top} M,\partial^{m}v \rangle =0\,.
    \end{aligned}
  \end{equation}
  Since $\nabla \cdot v = 0$, we straightforwardly calculate that
  \begin{equation*}
    \begin{aligned}
      \langle \partial^{m}(v\cdot \nabla v),\partial^{m}v \rangle=\langle v \cdot \nabla \partial^{m} v,\partial^{m}v \rangle+\sum_{\substack{0\neq m^{\prime} \leq m}}C^{m^{\prime}}_{m}\langle \partial^{m^{\prime}}v \cdot \nabla \partial^{m-m^{\prime}} v,\partial^{m}v \rangle\,,
    \end{aligned}
  \end{equation*}
  and
  \begin{equation*}
    \begin{aligned}
      &\langle \nabla \cdot \partial^{m}( \nabla M \odot \nabla M )- FF^{\top}),\partial^{m} v\rangle\\
      &=\langle \partial^{m}(\nabla\frac{|\nabla M|^{2}}{2}+\nabla M \cdot \Delta M ), \partial^{m} v\rangle+\langle \partial^{m}(FF^{\top}), \nabla \partial ^{m} v\rangle\\
      & =\langle \nabla M\cdot \Delta \partial^{m} M,  \partial^{m} v\rangle+\sum_{\substack{0\neq m^{\prime} \leq m}}C^{m^{\prime}}_{m}\langle \nabla \partial^{m^{\prime}} M \cdot \Delta \partial^{m-m^{\prime}} M,  \partial^{m} v\rangle\\
      &+ \langle \partial^{m}(FF^{\top}), \nabla \partial ^{m} v\rangle+\sum_{\substack{0\neq m^{\prime} \leq m}}C^{m^{\prime}}_{m} \langle \partial^{m-m^{\prime}}F \partial^{m^{\prime}}F^{\top}, \nabla \partial ^{m} v\rangle\,,
    \end{aligned}
  \end{equation*}
  and
  \begin{equation*}
    \begin{aligned}
      &-\langle \partial^{m} (\nabla H_{ext})^{\top} M,\partial^{m}v \rangle \\
      = & -\langle (\nabla \partial^{m} H_{ext})^{\top} M,\partial^{m}v \rangle-\sum_{\substack{0\neq m^{\prime} \leq m}}C^{m^{\prime}}_{m}\langle (\nabla \partial^{m-m^{\prime}} H_{ext})^{\top}\partial^{m^{\prime}} M,\partial^{m}v \rangle\,.
    \end{aligned}
  \end{equation*}
  Collecting the above equalities, the equality \eqref{MEL_7} can be rewritten as
  \begin{equation}\label{MEL_8}
    \begin{aligned}
       &\tfrac{1}{2} \tfrac{\d}{\d t} \|\partial^{m} v \|^{2}_{L^{2}} + \nu \|\nabla \partial^{m} v \|^{2}_{L^{2}} + \langle \nabla M \cdot \Delta \partial^{m} M,\partial^{m} v \rangle + \langle \partial^{m}F F^{\top},\nabla \partial^{m} v\rangle \\[2mm]
       = &\underset{A_0^{m}}{\underbrace{ \langle (\nabla \partial^{m} H_{ext})^{\top} M,\partial^{m} v \rangle}} \ \underset{A_1^{m}}{\underbrace{ - \sum_{\substack{0\neq m^{\prime} \leq m}} C^{m^{\prime}}_{m} \langle \partial^{m^{\prime}} v \cdot \nabla \partial^{m-m^{\prime}} v,\partial^{m} v \rangle}} \\
       & +\underset{A_2^{m}}{\underbrace{ \sum_{\substack{0\neq m^{\prime} \leq m}} C^{m^{\prime}}_{m} \langle \nabla \partial^{m^{\prime}} M \cdot \Delta \partial^{m-m^{\prime}} M,\partial^{m} v \rangle}} +\underset{A_3^{m}}{\underbrace{ \sum_{\substack{0\neq m^{\prime} \leq m}} C^{m^{\prime}}_{m} \langle \partial^{m-m^{\prime}} F \nabla \partial^{m^{\prime}} F^{\top},\nabla \partial^{m} v \rangle}}\\
       & -\underset{A_4^{m}}{\underbrace{ \sum_{\substack{0\neq m^{\prime} \leq m}} C^{m^{\prime}}_{m} \langle (\nabla \partial^{m-m^{\prime}} H_{ext})^{\top} \partial^{m^{\prime}} M, \partial^{m} v \rangle}} \,.
   \end{aligned}
 \end{equation}
From acting the $m$-order derivative on the second equation of the system \eqref{MEL} for all $|m| \leq s$, taking $L^2$-inner product by dot with $\partial^{m} F $ and integrating by parts over $x \in \mathbb{R}^d$, we deduce that
   \begin{equation*}
     \begin{aligned}
       & \tfrac{1}{2} \tfrac{\d}{\d t} \| \partial^{m} F \|^{2}_{L^{2}} + \langle \partial^{m} (v \cdot \nabla F), \partial^{m} F \rangle-\langle (\partial^{m} \nabla v F),\partial^{m} F \rangle =0 \,.
    \end{aligned}
  \end{equation*}
  By employing the divergence-free property of $v$ and the direct calculations, we have
   \begin{equation*}
     \begin{aligned}
       & \langle \partial^{m}(v\cdot \nabla F),\partial^{m} F \rangle = \sum_{\substack{0\neq m^{\prime} \leq m}} C^{m^{\prime}}_{m}\langle\partial^{m^{\prime}} v \cdot \nabla \partial^{m-m^{\prime}} F),\partial^{m} F \rangle\,,
    \end{aligned}
  \end{equation*}
  and
  \begin{equation*}
    \begin{aligned}
      -\langle \nabla \partial^{m} v F,\partial^{m}F \rangle=-\langle  \nabla \partial^{m} v F,\partial^{m}F \rangle-\sum_{\substack{0\neq m^{\prime} \leq m}} C^{m^{\prime}}_{m}\langle ( \nabla \partial^{m-m^{\prime}} v \partial^{m\prime}F),\partial^{m}F \rangle \,.
    \end{aligned}
  \end{equation*}
  As a consequence, we have
   \begin{equation}\label{MEL_9}
     \begin{aligned}
       \tfrac{1}{2} \tfrac{\d}{\d t} \|\partial^{m} F \|^{2}_{L^{2}} - \langle \nabla \partial^{m} v F, \partial^{m} F \rangle = & \ \underset{B_1^{m}}{\underbrace{ - \sum_{\substack{0 \neq m^{\prime} \leq m}} C^{m^{\prime}}_{m} \langle \partial^{m^{\prime}} v \cdot \nabla \partial^{m-m^{\prime}} F,\partial^{m} F \rangle}} \\
       & \underset{B_2^{m}}{\underbrace{ - \sum_{\substack{0 \neq m^{\prime} \leq m}} C^{m^{\prime}}_{m} \langle \nabla \partial^{m-m^{\prime}} v \cdot \partial^{m^{\prime}} F,\partial^{m} F \rangle}}\,.
    \end{aligned}
  \end{equation}
 We further act the $m$-order derivative on the last second $M$-equation of the system \eqref{MEL} for all $|m| \leq s$, take $L^2$-inner product by dot with $\Delta\partial^{m} M$ and integrate by parts over $x \in \mathbb{R}^d$. We thereby obtain
 \begin{equation}\label{MEL_10}
   \begin{aligned}
     &\tfrac{1}{2} \tfrac{\d}{\d t} \|\nabla \partial^{m} M\|^{2}_{L^{2}} + \|\Delta \partial^{m} M\|^{2}_{L^{2}} - \langle \partial^{m} v \cdot \nabla M, \Delta \partial^{m} M \rangle \\[2mm]
     = & \underset{C_1^{m}}{\underbrace{ \sum_{\substack{ 0 \neq m^{\prime} \leq m}} C^{m^{\prime}}_{m} \langle \partial^{m-m^{\prime}} v \nabla \partial^{m^{\prime}} M, \Delta \partial^{m} M \rangle}} \ \ \underset{C_2^{m}}{\underbrace{ - \sum_{\substack{ 0 \neq m^{\prime} \leq m}} C^{m^{\prime}}_{m} \langle \partial^{m} H_{ext},\Delta \partial^{m} M \rangle }} \\
     & \underset{C_3^{m}}{\underbrace{ - \sum_{\substack{ 0 \neq m^{\prime} \leq m}} C^{m^{\prime}}_{m} \langle \partial^{m^{\prime }} M \times \Delta\partial^{m-m^{\prime}} M ,\Delta \partial^{m} M \rangle}} \\
     & \underset{C_4^{m}}{\underbrace{ - \langle \partial^{m} ( M \times H_{ext}), \Delta \partial^{m}  M \rangle}}  + \underset{C_5^{m}}{\underbrace{ \langle \partial^{m} ( \Gamma(M) M), \Delta \partial^{m} M \rangle}} \,.
  \end{aligned}
\end{equation}
From adding \eqref{MEL_8}, \eqref{MEL_9} to \eqref{MEL_10} and summing up for all $|m| \leq s$, we derive that
\begin{equation}\label{MEL_11}
  \begin{aligned}
    &\tfrac{1}{2} \tfrac{\d}{\d t} \mathcal{E}_s (t) + \mathcal{D}_s (t) = \sum_{|m| \leq s} \Big( \sum_{0 \leq i \leq 4} A_i^m + \sum_{1 \leq i \leq 2} B_i^{m} + \sum_{1 \leq i \leq 5} C_i^{m} \Big) \,.
  \end{aligned}
\end{equation}

Now we estimate the terms $A_i^{m} (0 \leq i \leq 4), B_i^{m} (1 \leq i \leq 2), C_i^{m} (1 \leq i \leq 5) $ term by term. For the quantity $A_0^{m}$, we have
\begin{equation}\label{A0-m}
  \begin{aligned}
    A_{0}^{m} \leq & \|\nabla\partial^{m} H_{ext} \|_{L^{2}} \|\partial^{m} v\|_{L^{2}} \leq \|H_{ext}\|_{H^{s+1}} \|v\|_{H^{s}} \leq \|H_{ext}\|_{H^{s+1} } \mathcal{E}^{\frac{1}{2}}_s (t) \,.
  \end{aligned}
\end{equation}
Here we make use of the H\"older inequality. For the term $A_1^{m}$, we estimate that
\begin{equation}\label{A1-m}
  \begin{aligned}
    A_{1}^{m} & \leq C \sum_{\substack{0\neq m^{\prime} \leq m}} \|\partial^{m^{\prime}} v \|_{L^{4}}\|\nabla \partial^{m-m^{\prime}} v \|_{L^{4}}\|\partial^{m} v \|_{L^{2}}\\
    & \leq C \sum_{\substack{0\neq m^{\prime} \leq m}}\|\partial^{m^{\prime}} v \|_{H^{1}}\|\nabla \partial^{m-m^{\prime}} v \|_{H^{1}}\|\partial^{m} v \|_{L^{2}} \\
    & \leq C \| v \|^{2}_{H^{s}}\| \nabla v \|_{H^{s}} \leq C \mathcal{E}_s (t) \mathcal{D}^{\frac{1}{2}}_s (t) \,.
  \end{aligned}
\end{equation}
 For the terms $A_2^{m}, A_3^{m}, A_4^{m}$, the H\"older inequality, the Sobolev embedding $H^1(\R^d) \hookrightarrow L^p(\R^d)$ $(p=3,4)$ and $H^2(\R^d) \hookrightarrow L^\infty(\R^d)$ with $d = 2$ or $3$ enable us to estimate that
\begin{equation}\label{A2-m}
  \begin{aligned}
    A_{2}^{m} &  \leq C \sum_{\substack{ 0 \neq m^{\prime} < m}} \| \nabla \partial^{m^{\prime}} M \|_{L^4} \| \Delta \partial^{m-m^{\prime}} M \|_{L^4} \| \partial^{m} v \|_{L^{2}} + \| \nabla \partial^{m} M \|_{L^{2}} \| \Delta M\|_{L^{\infty}} \| \partial^{m} v\|_{L^{2}}\\
    & \leq C \| \nabla M \|^{2}_{H^{s}} \| v \|_{H^{s}} \leq C \mathcal{E}^{\frac{3}{2}}_s (t) \,,
  \end{aligned}
 \end{equation}
 where $s \geq 2$ is required, and
\begin{equation}\label{A3-m}
  \begin{aligned}
    A_{3}^m & \leq C \| F \|_{L^{\infty}} \| \partial^{m} F^{\top} \|_{L^{2}} \| \nabla \partial^{m} v \|_{L^{2}} + \sum_{\substack{ 0 \neq m^{\prime} < m}} \|\partial^{m-m^{\prime}} F\|_{L^{4}} \| \partial^{m} F^{\top} \|_{L^{4}} \| \nabla \partial^{m} v\|_{L^{2}} \\
    &\leq C \| F \|^{2}_{H^{s}} \|\nabla v \|_{H^{s}}  \leq C \mathcal{E}_s (t) \mathcal{D}^{\frac{1}{2}}_s (t) \,,
  \end{aligned}
\end{equation}
and
  \begin{equation}\label{A4-m}
    \begin{aligned}
      A_{4}^m & \leq C \sum_{\substack{|m^{\prime} | = 1}} \| \partial^{m-m^{\prime}} H_{ext} \|_{L^{2}} \| \partial^{m^{\prime}} M \|_{L^{\infty}} \| \partial^{m} v \|_{L^{2}} \\
      & + C \sum_{\substack{m^{\prime} \leq m, |m'| \geq 2}} \| \partial^{m-m^{\prime}} H_{ext} \|_{L^{4}} \| \partial^{m^{\prime}} M \|_{L^{4}} \|\partial^{m} v \|_{L^{2}} \\
      & \leq C \| H_{ext} \|_{H^{s}} \| \nabla M \|_{H^{s}} \| v \|_{H^{s}} \leq C \| H_{ext} \|_{H^{s}} \mathcal{E}_s (t) \,.
   \end{aligned}
 \end{equation}

 For the term $B_1^m $ and $ B_2^m$, from the H\"older inequality and the Sobolev embedding theory, we deduce that
 \begin{equation}\label{B1-m}
   \begin{aligned}
     B_{1}^m & \leq C \sum_{\substack{|m^{\prime} | = 1}} \| \partial^{m^{\prime}} v \|_{L^{\infty}} \| \nabla \partial^{m-m^{\prime}} F \|_{L^{2}} \|\partial^{m} F \|_{L^{2}} \\
     & + C \sum_{\substack{ 2 \neq m^{\prime} \leq m}} \| \partial^{m^{\prime}} v \|_{L^{4}} \|\nabla \partial^{m-m^{\prime}} F \|_{L^{4}} \| \partial^{m} F \|_{L^{2}} \\
     & \leq C \| \nabla  v \|_{H^{s}} \| F \|^{2}_{H^{s}} \leq C \mathcal{E}_s (t) \mathcal{D}^{\frac{1}{2}}_s (t) \,,
  \end{aligned}
\end{equation}
 and
 \begin{equation}\label{B2-m}
   \begin{aligned}
     B_{2}^m & \leq C \sum_{\substack{ 0 \neq m^{\prime}< m}} \| \nabla \partial^{m-m^{\prime}} v \|_{L^{4}} \| \partial^{m^{\prime}} F \|_{L^{4}} \|\partial^{m} F \|_{L^{2}} + C \|\nabla v \|_{L^{\infty}} \|\partial^{m} F \|^{2}_{L^{2}} \\
     & \leq \| \nabla v \|_{H^{s}} \| \partial^{m} F \|^{2}_{H^{s}} \leq C \mathcal{E}_s (t) \mathcal{D}^{\frac{1}{2}}_s (t) \,.
   \end{aligned}
 \end{equation}

 Next, we estimate the terms $C_i^m$ for $1 \leq i \leq 5$. The term $C_1^m$ can be estimated as
 \begin{equation}\label{C1-m}
   \begin{aligned}
     C_{1}^m & \leq C \| v \|_{L^{\infty}} \| \nabla \partial^{m} M \|_{L^{2}} \| \Delta \partial^{m} M \|_{L^{2}} \\
     & + C \sum_{\substack{ 0 \neq m^{\prime} < m}} \| \partial^{m-m^{\prime}} v \|_{L^{4}} \| \nabla \partial^{m^{\prime}} M \|_{L^{4}} \| \Delta \partial^{m} M \|_{L^{2}} \\
     & \leq C \| v \|_{H^{s}} \| \nabla M \|_{H^{s}} \| \Delta M \|_{H^{s}} \leq C \mathcal{E}_s (t) \mathcal{D}^{\frac{1}{2}}_s (t)\,,
   \end{aligned}
 \end{equation}
 where the H\"older inequality and the Sobolev embedding theory are utilized. Similarly in arguments of estimating the term $C_1^m$, one can easily estimate the terms $C_2^m$, $C_3^m$ and $C_4^m$ that
 \begin{equation}\label{C2-m}
   \begin{aligned}
     C_{2}^m & \leq \langle \nabla \partial^{m} H_{ext}, \nabla \partial^{m} M \rangle \leq \| \nabla \partial^{m} H_{ext} \|_{L^{2}} \| \nabla \partial^{m} M \|_{L^{2}}\\
     & \leq \| H_{ext} \|_{H^{s+1}} \| \nabla  M \|_{H^{s}} \leq \| H_{ext} \|_{H^{s+1}} \mathcal{E}^{\frac{1}{2}}_s (t)\,,
   \end{aligned}
 \end{equation}
 and
 \begin{equation}\label{C3-m}
   \begin{aligned}
     C_{3}^m & \leq C \sum_{\substack{ |m^{\prime}| = 1 }} \| \partial^{m^{\prime }} M \|_{L^{\infty}} \| \Delta \partial^{m-m^{\prime}} M \|_{L^{2}} \| \Delta \partial^{m} M \|_{L^{2}} \\
     & +C \sum_{\substack{ m^{\prime} \leq m, |m^{\prime}|\geq 2}} \| \partial^{m^{\prime }} M \|_{L^{4}} \| \Delta\partial^{m-m^{\prime}} M \|_{L^{4}} \| \Delta \partial^{m} M \|_{L^{2}} \\
     & \leq C \|\nabla M \|^{2}_{H^{s}} \| \Delta M \|_{H^{s}}  \leq C \mathcal{E}_s (t) \mathcal{D}^{\frac{1}{2}}_s (t) \,,
   \end{aligned}
 \end{equation}
 and
 \begin{equation}\label{C4-m}
   \begin{aligned}
     C_{4}^m & = \langle \nabla M \times \partial^{m} H_{ext} + M \times \nabla \partial^{m} H_{ext}, \nabla \partial^{m} M \rangle \\
     & -\sum_{\substack{ 0 \neq m^{\prime} \leq m}} C^{m^{\prime}}_{m} \langle \partial^{m^{\prime}} M \times \partial^{m-m^{\prime}} H_{ext}, \Delta \partial^{m}  M \rangle\\
     & \leq \|\nabla M \|_{L^{\infty}} \| \partial^{m} H_{ext} \|_{L^{2}} \| \nabla \partial^{m} M \|_{L^{2}} + \| \nabla \partial^{m} H_{ext} \|_{L^{2}} \| \nabla \partial^{m} M \|_{L^{2}}\\
     & + C \sum_{\substack{ 0 \neq m^{\prime} \leq m}} \| \partial^{m^{\prime}} M \|_{L^{4}} \| \partial^{m-m^{\prime}} H_{ext} \|_{L^{4}} \| \Delta \partial^{m} M \|_{L^{2}}  \\
     & \leq C \| H_{ext} \|_{H^{s+1}} ( \| \nabla M \|^{2}_{H^{s}} + \| \nabla M \|_{H^{s}} + \| \nabla M \|_{H^{s}} \| \Delta M \|_{H^{s}}) \\
     & \leq C \| H_{ext} \|_{H^{s+1}} ( \mathcal{E}_s (t) + \mathcal{E}^{\frac{1}{2}}_s (t) + \mathcal{E}^{\frac{1}{2}}_s (t) \mathcal{D}^{\frac{1}{2}}_s (t)) \,.
   \end{aligned}
 \end{equation}

 We now turn to estimate the term $C_5^m$ given in \eqref{MEL_10}. We first divide it into two parts according to the form of Lagrangian multiplier $\Gamma (M)$ defined in \eqref{Lagrangian-Multiplier}. Specifically,
 \begin{equation}\label{C5-C51-C52-m}
   \begin{aligned}
     C_{5}^m = \underbrace{ \langle \partial^{m} ( ( H_{ext} \cdot M ) M ), \Delta \partial^{m} M \rangle }_{C_{51}^m} \ \ \underbrace{ - \langle \partial^{m} ( \mid \nabla M \mid^{2} M ), \Delta \partial^{m} M \rangle }_{C_{52}^m} \,.
   \end{aligned}
 \end{equation}
 For the quantity $C_{51}^m$, we estimate that
 \begin{equation}\label{C51-m}
   \begin{aligned}
     C_{51}^m = & -\langle ( \nabla \partial^{m} H_{ext} \cdot M ) M, \Delta \partial^{m} M \rangle - \langle \partial^{m} H_{ext} \cdot \nabla ( M \otimes M), \Delta \partial^{m} M \rangle \\
     & + \sum_{\substack{ 0 \neq m^{\prime} \leq m}} C^{m^{\prime}}_{m} \langle \partial^{m-m^{\prime}} H_{ext} \cdot \partial^{m^{\prime}} ( M \otimes M), \Delta \partial^{m} M \rangle \\
     \leq & \| \nabla \partial^{m} H_{ext} \|_{L^{2}} \| \nabla \partial^{m} M \|_{L^{2}} + \| \partial^{m} H_{ext} \|_{L^{2}} \| \nabla M \|_{L^{\infty}} \| \nabla \partial^{m} M \|_{L^{2}} \\
     & + C \sum_{\substack{ 0 \neq m^{\prime} \leq m}} \| \partial^{m-m^{\prime}} H_{ext} \|_{L^{4}} \| \partial^{m^{\prime}} ( M \otimes M ) \|_{L^{4}} \| \Delta \partial^{m} M \|_{L^{2}} \\
     \leq & C \| H_{ext} \|_{H^{s+1}} \| \nabla M \|_{H^{s}} + C \| H_{ext} \|_{H^{s}} \| \nabla M \|^{2}_{H^{s}} \\
     & + C \| H_{ext} \|_{H^s} ( \| \nabla M \|_{H^s} + \| \nabla M \|^2_{H^s} ) \| \Delta M \|_{H^s} \\
     \leq & C \| H_{ext} \|_{H^{s+1}} ( \mathcal{E}_s (t) + \mathcal{E}^{\frac{1}{2}}_s (t) ) + C \| H_{ext} \|_{H^s} ( \mathcal{E}_s (t) +\mathcal{E}^{\frac{1}{2}}_s (t) ) \mathcal{D}^{\frac{1}{2}}_s (t) \\
     \leq & C \| H_{ext} \|_{H^{s+1}} \big( \mathcal{E}_s (t) + \mathcal{E}^{\frac{1}{2}}_s (t) \big) \big( 1 + \mathcal{D}^{\frac{1}{2}}_s (t) \big) \,,
   \end{aligned}
 \end{equation}
and similarly in \eqref{C51-m} we obtain the bound of $C_{52}^m$ that
\begin{equation}\label{C52-m}
  \begin{aligned}
    C_{52}^m = & - \langle \partial^{m} ( \mid \nabla M \mid^{2} ) M, \Delta \partial^{m} M \rangle - \sum_{\substack{ 0 \neq m^{\prime} \leq m}} C^{m^{\prime}}_{m} \langle \partial^{m-m^{\prime}} ( \mid \nabla M \mid^{2} ) \partial^{m^{\prime}} M, \Delta \partial^{m} M \rangle \\
    \leq & C \| \nabla M \|^{2}_{H^s} \| \Delta M \|_{H^s} + C \| \nabla M \|^3_{H^s} \| \Delta M \|_{H^s} \\
    \leq & C \mathcal{E}_s (t) \mathcal{D}^{\frac{1}{2}}_s (t) + C \mathcal{E}^{\frac{3}{2}}_s (t) \mathcal{D}^{\frac{1}{2}}_s (t) \,.
 \end{aligned}
\end{equation}
As a result, plugging the bounds \eqref{C51-m} and \eqref{C52-m} into \eqref{C5-C51-C52-m} reduces to
\begin{equation}\label{C5-m}
  \begin{aligned}
    C_{5}^m \leq & C \| H_{ext} \|_{H^{s+1}} \big( \mathcal{E}_s (t) + \mathcal{E}^{\frac{1}{2}}_s (t) \big) \big( 1 + \mathcal{D}^{\frac{1}{2}}_s (t) \big) + \big[ \mathcal{E}_s (t) + \mathcal{E}^{\frac{3}{2}}_s (t) \big]  \mathcal{D}^{\frac{1}{2}}_s (t) \,.
 \end{aligned}
\end{equation}
Finally, we substitute the inequalities \eqref{A0-m}, \eqref{A1-m}, \eqref{A2-m}, \eqref{A3-m}, \eqref{A4-m}, \eqref{B1-m}, \eqref{B2-m}, \eqref{C1-m}, \eqref{C2-m}, \eqref{C3-m}, \eqref{C4-m} and \eqref{C5-m} into the equality \eqref{MEL_11} and then we have
\begin{equation*}
  \begin{aligned}
    \tfrac{1}{2} \tfrac{\d}{\d t} \mathcal{E}_s (t) & + \mathcal{D}_s (t)
    \leq C \| H_{ext} \|_{H^{s+1}} \big( \mathcal{E}^\frac{1}{2}_s (t) + \mathcal{E}^{\frac{3}{2}}_s (t) \big) \big( 1 + \mathcal{D}^{\frac{1}{2}}_s (t) \big) + \big[ \mathcal{E}_s (t) + \mathcal{E}^{\frac{3}{2}}_s (t) \big]  \mathcal{D}^{\frac{1}{2}}_s (t) \\
    \leq & C \| H_{ext} \|_{H^{s+1}} \big( \mathcal{E}_s^\frac{1}{2} (t) + \mathcal{E}_s^\frac{3}{2} (t) \big) + C ( 1 + \| H_{ext} \|_{H^{s+1}} ) \big( \mathcal{E}_s^\frac{1}{2} (t) + \mathcal{E}_s^\frac{3}{2} (t) \big) \mathcal{D}_s^\frac{1}{2} (t) \,,
 \end{aligned}
\end{equation*}
and then we finish the proof of Proposition \ref{Prop-AE1}.
\end{proof}

\section{Well-posedness for LLG evolution with a given velocity field}\label{Sec: Local-LLG-Given-v}

In this section, we will justifying the well-posedness of the following wave map type system with a given velocity field $v(t,x) \in \R^d$
\begin{equation}\label{LLC-given-v}
\left\{
\begin{array}{l}
\partial_{t} M + v \cdot \nabla M  =\Delta M+H_{ext}+\Gamma(M)M - M \times ( \Delta M+ H_{ext}) \,,\\[2mm]
M |_{t=0} = M_0 \in \mathbb{S}^{d-1}
\end{array}
\right.
\end{equation}
with the geometric constraint $|M| = 1$, where the Lagrangian multiplier $\Gamma (M)$ is given in \eqref{Lagrangian-Multiplier}. The results of the well-posedness of the LLC equation \eqref{LLC-given-v} will be used in constructing the iterative approximate system of the system \eqref{MEL}. We first illustrate the relations between the Lagrangian multiplier $\Gamma (M)$ and geometric constraint $|M| = 1$. Specifically, if the Lagrangian multiplier $\Gamma (M)$ is of the form \eqref{Lagrangian-Multiplier}, then this constraint will only need to be imposed initially. We then prove the local-in-time existence of \eqref{LLC-given-v} by employing the mollifier method.

\subsection{Lagrangian multiplier $\Gamma (M)$ and constraint $\mid M \mid=1$}

In this subsection, we prove the following lemma on the relation between the Lagrangian multiplier $\Gamma (M)$ and the geometric constraint $\mid M \mid=1$.

\begin{lemma}\label{Lmm-|M|=1}
	Let $s \geq 2$ be a fixed integer and $T \in (0, \infty)$. Given $v \in L^\infty(0,T;H^s(\R^d)) \cap L^2(0,T;H^{s+1}(\R^d))$ and $H_{ext} \in L^\infty ( 0, T; H^{s+1} (\R^d) )$, we assume $M (t,x) \in L^\infty ([0,T] \times \R^d)$, satisfying $ \nabla M \in L^\infty(0,T;H^s(\R^d)$, obeys the LLG equation \eqref{LLC-given-v}.
	
	If the constant $\mid M \mid=1$ is further assumed, then the Lagrange  multiplier $\Gamma (M)$ is of the form \eqref{Lagrangian-Multiplier}. Conversely, if we give the form of $\Gamma (M)$ as in \eqref{Lagrangian-Multiplier} and $M$ satisfies the initial data condition $\mid M_0 \mid=1$, then $\mid M \mid=1$ holds at any time.
\end{lemma}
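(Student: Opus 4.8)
The plan is to treat the two implications separately; both follow from taking the scalar product of the evolution equation in \eqref{LLC-given-v} with $M$ (respectively with $2M$) and exploiting three elementary identities: $M\cdot\p_t M=\tfrac12\p_t|M|^2$ and $M\cdot\nabla M=\tfrac12\nabla|M|^2$ (hence $M\cdot(v\cdot\nabla M)=\tfrac12 v\cdot\nabla|M|^2$); $2M\cdot\Delta M=\Delta|M|^2-2|\nabla M|^2$; and $M\cdot(M\times\xi)=0$ for every $\xi\in\R^d$, so that the Gilbert damping term $-M\times(\Delta M+H_{ext})$ never contributes. The regularity hypotheses on $(v,M,H_{ext})$ are amply sufficient to legitimize the pointwise differentiations used below.

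For the first implication, assume $|M|\equiv1$. Differentiating $|M|^2=1$ in $t$ and in each $x_i$ gives $M\cdot\p_t M=0$ and $M\cdot\nabla M=0$ (hence $M\cdot(v\cdot\nabla M)=0$), and differentiating once more gives $M\cdot\Delta M=-|\nabla M|^2$. Dotting \eqref{LLC-given-v} with $M$, both terms on the left and the cross term on the right drop, leaving $0=M\cdot\Delta M+M\cdot H_{ext}+\Gamma(M)|M|^2=-|\nabla M|^2+M\cdot H_{ext}+\Gamma(M)$, i.e.\ $\Gamma(M)=|\nabla M|^2-M\cdot H_{ext}$, which is exactly \eqref{Lagrangian-Multiplier}.

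For the converse, impose $\Gamma(M)=|\nabla M|^2-M\cdot H_{ext}$ and set $h:=|M|^2-1$, so $h|_{t=0}=|M_0|^2-1=0$. Dotting \eqref{LLC-given-v} with $2M$ and using the three identities above together with $2\Gamma(M)|M|^2=2\Gamma(M)(h+1)$, the $|\nabla M|^2$-terms and the $M\cdot H_{ext}$-terms cancel in pairs and $h$ solves the linear parabolic equation
\begin{equation*}
  \p_t h+v\cdot\nabla h-\Delta h=2\big(|\nabla M|^2-M\cdot H_{ext}\big)\,h\,,\qquad h|_{t=0}=0\,.
\end{equation*}
Testing against $h$ and integrating by parts, the transport term gives $\tfrac12\l(\nabla\cdot v)h,h\r$, which vanishes when $\nabla\cdot v=0$ and is in any case $\le\tfrac12\|\nabla\cdot v\|_{L^\infty}\|h\|^2_{L^2}$, while the right-hand side is $\le 2\|\Gamma(M)\|_{L^\infty}\|h\|^2_{L^2}$; by the Sobolev embedding $H^s\hookrightarrow L^\infty$ ($s\ge2$, $d=2,3$) both $\|\Gamma(M)\|_{L^\infty}$ and $\|\nabla\cdot v\|_{L^\infty}$ are integrable on $[0,T]$. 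Hence $\tfrac{\d}{\d t}\|h(t)\|^2_{L^2}\le a(t)\|h(t)\|^2_{L^2}$ with $a\in L^1(0,T)$, and Gr\"onwall's inequality with $h(0)=0$ forces $h\equiv0$, that is $|M|=1$ on $[0,T]$.

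The algebra is routine; the one point requiring care is the rigor of this last energy estimate, since the hypotheses only give $h\in L^\infty([0,T]\times\R^d)$ with $\nabla h=2M\cdot\nabla M\in L^\infty(0,T;L^2(\R^d))$, and not \emph{a priori} $h\in L^2(\R^d)$. I would handle this by noting that $v$ (being in $L^\infty_tH^s_x$ with $s>d/2$) and $\Gamma(M)$ are bounded, so the displayed linear parabolic Cauchy problem admits at most one bounded solution, which pins down $h\equiv0$; alternatively, for the mollified solutions constructed in the next subsection one has $h\in C([0,T];L^2(\R^d))$ (the approximate energy there controls $\|M^\eps-\J_\eps M_0\|_{L^2}$), so the Gr\"onwall argument applies verbatim. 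I expect this functional-analytic bookkeeping, rather than the computation, to be the only genuine obstacle.
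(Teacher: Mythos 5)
Your proof is correct and follows the same strategy as the paper: dot the LLG equation with $M$, use $M\cdot(M\times\xi)=0$ and the parabolic identities for $|M|^2$ to obtain the scalar equation linking $\Gamma(M)$ to $|M|^2$, read off direction one when $|M|\equiv1$, and for direction two reduce to the Cauchy problem for $h=|M|^2-1$ and conclude by an $L^2$ energy estimate plus Gr\"onwall. You additionally flag a genuine gap that the paper's proof glosses over---the hypotheses give $h\in L^\infty$ and $\nabla h\in L^\infty_t L^2_x$ but not directly $h\in L^2_x$---and your proposed fixes are sound; a third option is to note that the derived equation itself yields $h\in C([0,T];L^2)$, since $h(0)=0\in L^2$ and each term on the right ($\Delta h=2|\nabla M|^2+2M\cdot\Delta M$, $v\cdot\nabla h=2v\cdot(M\cdot\nabla M)$, and $\Gamma(M)h$) lies in $L^\infty_tL^2_x$ under the stated regularity, so $\partial_t h\in L^\infty_t L^2_x$ and the Gr\"onwall argument applies verbatim.
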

	
	\begin{proof}[Proof of Lemma \ref{Lmm-|M|=1}]
		From multiplying by $M$ in the LLG equation of \eqref{LLC-given-v}, we easily deduce that
		\begin{equation}\label{Geometric-relation}
		  \begin{aligned}
		    \Gamma(M) \mid M \mid^{2} & = \partial _{t} ( \tfrac{1}{2} \mid M \mid^{2} ) + v \cdot \nabla ( \tfrac{1}{2} \mid M \mid^{2} ) - \Delta M \cdot M - H_{ext} \cdot M \\
		    & = \tfrac{1}{2} ( \partial _{t} + v \cdot \nabla ) \mid M \mid^{2} - \tfrac{1}{2} \Delta( \mid M \mid^{2}) + \mid \nabla M \mid^{2}  -H_{ext} \cdot M \,.
		  \end{aligned}
		\end{equation}
		If $|M| = 1$, the equality \eqref{Geometric-relation} implies that
		$$ \Gamma (M) = |\nabla M|^2 - H_{ext} \cdot M \,. $$
		
		Conversely, if $\Gamma (M)$ is given with the form \eqref{Lagrangian-Multiplier}, the equality \eqref{Geometric-relation} can be rewritten as
		\begin{equation}
		  \begin{aligned}
		    ( \partial _{t} + v \cdot \nabla ) ( \mid M \mid^{2}-1 ) - \Delta ( \mid M \mid^{2}-1 ) = 2 \Gamma(M) ( \mid M \mid^{2}-1) \,.
		  \end{aligned}
		\end{equation}
		Let $h=\mid M \mid^{2}-1$. Then, by the initial assumption $|M_0| = 1$, one knows that $h$ solves the following Cauchy problem
		\begin{equation}\label{Evolution-h}
		  \left\{
		    \begin{array}{l}
		      \partial_{t} h + v \cdot \nabla h - \Delta h = 2 \Gamma(M) h \,,\\[2mm]
		      h(0,x) = \mid M_0 \mid^{2}-1=0 \,.
		    \end{array}
		  \right.
		\end{equation}
		From multiplying \eqref{Evolution-h} by $h$ and integrating by parts over $x \in \R^d$, we deduce that
		\begin{equation*}
		  \begin{aligned}
		    \tfrac{1}{2} \tfrac{\d}{\d t} \| h \|^{2}_{L^{2}} + \| \nabla h \|^{2}_{L^{2}} = 2 \langle \Gamma(M) h, h \rangle - \tfrac{1}{2} \langle \nabla \cdot v, h^2 \rangle \\
		    \leq \big( 2 \| \Gamma(M) \|_{L^{\infty}} + \tfrac{1}{2} \| \nabla \cdot v \|_{L^\infty} \big) \| h \|^{2}_{L^{2}} \,.
		  \end{aligned}
		\end{equation*}
		Noticing that $ v \in L^\infty (0,T;H^s) \cap L^2 (0,T;H^{s+1}) $, $H_{ext} \in L^\infty (0,T;H^{s+1})$, $M \in L^\infty ( [0,T] \times \R^d )$ and $\nabla M \in L^\infty (0,T;H^s)$, we derive from the Sobolev embedding $H^2 (\R^d) \hookrightarrow L^\infty (\R^d)$ that
		\begin{equation*}
		  \begin{aligned}
		    2 & \| \Gamma(M) \|_{L^{\infty}} + \tfrac{1}{2} \| \nabla \cdot v \|_{L^\infty} \leq C \| \ |\nabla M|^2 - H_{ext} \cdot M \|_{L^\infty} + C \| \nabla v \|_{L^\infty} \\
		    \leq & C \| \nabla M \|^2_{H^2} + C \| M \|_{L^\infty ([0,T] \times \R^d)} \| H_{ext} \|_{H^2} + C \| \nabla v \|_{H^2} \\
		    \leq & C \big( \| \nabla M \|^2_{H^2} + \| M \|_{L^\infty ([0,T] \times \R^d)} \| H_{ext} \|_{H^2} + \| \nabla v \|_{H^2} \big) (t) \equiv \tfrac{1}{2} \Lambda (t) \in L^1 ([0,T]) \,.
		  \end{aligned}
		\end{equation*}
		We thereby have
		\begin{equation}
		  \begin{aligned}
		    \tfrac{\d}{\d t} \| h \|^2_{L^2} \leq \Lambda (t) \| h \|^2_{L^2} \,,
		  \end{aligned}
		\end{equation}
		which implies by the Gr\"onwall inequality and the vanishing of initial data $h (0,x)$ that
		\begin{equation}
		  \begin{aligned}
		    0 \leq \| h (t) \|^2_{L^2} \leq \| h (0) \|^2_{L^2} \exp \Big( \int_0^t \Lambda (s) \d x \Big) = 0
		  \end{aligned}
		\end{equation}
		for all $t \in [0,T]$. Thus, we have $h(x,t)=0$ holds for all times $t \in [0,T]$. Consequently, the proof of Lemma \ref{Lmm-|M|=1} is finished.
	\end{proof}

\subsection{Local existence of LLG equation with a given background velocity}

In this subsection, we will prove the existence of the local-in-time classical solutions to the LLG equation \eqref{LLC-given-v} with a given bulk velocity $v$ by adopting the mollifier approximate method.

\begin{proposition}\label{Prop-Local-LLG}
  For the integer $s \geq 2$, and $T_0 > 0$, let vector fields $v \,, H_{ext} \in \R^d$  satisfy $ v \in L^\infty(0,T_0;H^s(\R^d)) \cap L^2(0,T_0 ; H^{s+1}(\R^d))$, $ H_{ext} \in L^\infty(0,T_0 ; H^s (\R^d) $. If the initial data $M_0 \in \mathbb{S}^{d-1}$ subjects to $\nabla M_0 \in H^s (\R^d)$, then there is a $T \in (0,T_0]$, depending only on $M_0$, $H_{ext}$ and $v$, such that the Cauchy problem \eqref{LLC-given-v} has a unique classical solution $M(t,x) \in \mathbb{S}^{d-1}$ satisfying $\nabla M \in C(0,T;H^s(\R^d), \Delta M \in L^2(0,T;H^s(\R^d)$. Moreover, there exists a positive $C_0$, depending only on $M_0 $, $H_{ext}$, $v$ and $T_0$, such that following bound holds:
  \begin{equation}\label{bound}
    \begin{aligned}
      \| \nabla M \|^2_{L^\infty (0,T; H^s) } + \| \Delta M \|^2_{L^2 (0,T; H^s)} \leq C_0 \,.
    \end{aligned}
  \end{equation}
\end{proposition}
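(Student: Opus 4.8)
The plan is to obtain $M$ as the limit, as $\eps\to0$, of solutions of a mollified regularization of \eqref{LLC-given-v}; for each fixed $\eps$ the Cauchy--Lipschitz theory in the Banach space $H^s(\R^d)$ provides a local solution, a uniform-in-$\eps$ energy estimate furnishes a common lifespan together with the bound \eqref{bound}, a compactness argument passes to the limit, and the geometric constraint $|M|=1$ is recovered a posteriori from Lemma \ref{Lmm-|M|=1}. Let $\J_\eps$ be the standard Friedrichs mollifier on $\R^d$. The approximate problem has the form
\begin{equation}\label{moll-LLG-sketch}
  \left\{
    \begin{array}{l}
      \partial_t M^\eps + \J_\eps \big( v \cdot \nabla \J_\eps M^\eps \big) = \J_\eps \Delta \J_\eps M^\eps + H_{ext} + \J_\eps \big( \Gamma (\J_\eps M^\eps) \J_\eps M^\eps \big) - \J_\eps \big( \J_\eps M^\eps \times ( \Delta \J_\eps M^\eps + H_{ext} ) \big) \,, \\[1mm]
      M^\eps |_{t=0} = \J_\eps M_0 \,,
    \end{array}
  \right.
\end{equation}
with $\Gamma$ still given by \eqref{Lagrangian-Multiplier}. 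For fixed $\eps>0$ every spatial derivative falling on $M^\eps$ is shielded by a mollifier, so the right-hand side of \eqref{moll-LLG-sketch} defines a map that is locally Lipschitz in the state and measurable in $t$; hence there is a unique maximal solution with $\nabla M^\eps\in C([0,T_\eps);H^s)$ and $M^\eps-\J_\eps M_0\in C([0,T_\eps);L^2)$. Observe that $|M^\eps|=1$ is \emph{not} available here: mollification destroys this constraint, which is exactly the obstruction the rest of the argument must handle.

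The heart of the proof is a bound uniform in $\eps$. Following the energy scheme of Section~\ref{A_Priori} (apply $\partial^m$ for $|m|\le s$, pair with $-\Delta\partial^m M^\eps$, and sum), and in addition pairing \eqref{moll-LLG-sketch} with $M^\eps-\J_\eps M_0$, one derives a differential inequality for
\[ E_\eps(t) := \|\nabla M^\eps(t)\|_{H^s}^2 + \|M^\eps(t)-\J_\eps M_0\|_{L^2}^2 \,, \qquad D_\eps(t):=\|\Delta M^\eps(t)\|_{H^s}^2 \,, \]
of the form $\tfrac12\tfrac{\d}{\d t}E_\eps(t)+\tfrac12 D_\eps(t)\le\Phi(t)\,(1+E_\eps(t))^{p}$ for a fixed $p$ (after a Young-inequality absorption of the dissipation), where $\Phi\in L^1_{\mathrm{loc}}(0,T_0)$ is assembled from $\|v\|_{H^s}$, $\|\nabla v\|_{L^\infty}$ and $\|H_{ext}\|_{H^s}$; here $v\in L^\infty(0,T_0;H^s)\cap L^2(0,T_0;H^{s+1})$ ensures $\|\nabla v\|_{L^\infty}\in L^2(0,T_0)$. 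The delicate point is the cubic Lagrange-multiplier term $\J_\eps(\Gamma(\J_\eps M^\eps)\J_\eps M^\eps)$, which at top order produces contributions like $\langle(\nabla M^\eps\cdot\partial^m\nabla M^\eps)M^\eps,\Delta\partial^m M^\eps\rangle$ that one can only close with a genuine $L^\infty$ bound on $M^\eps$; since $|M^\eps|\neq1$, this is supplied by the inequality
\[ \|M^\eps\|_{L^\infty}\le\|M^\eps-\J_\eps M_0\|_{L^2}+C\|\nabla M^\eps\|_{H^s}+C\|\nabla M_0\|_{H^s}+1\lesssim 1+E_\eps(t)^{1/2}+\|\nabla M_0\|_{H^s}\,. \]
This is precisely why the term $\|M^\eps-\J_\eps M_0\|_{L^2}^2$ --- which vanishes at $t=0$, so that $E_\eps(0)=\|\nabla\J_\eps M_0\|_{H^s}^2\le\|\nabla M_0\|_{H^s}^2$ --- is built into $E_\eps$; its evolution couples back consistently because pairing \eqref{moll-LLG-sketch} with $M^\eps-\J_\eps M_0$ yields the favorable term $-\|\nabla\J_\eps M^\eps\|_{L^2}^2$ plus lower-order remainders controlled by the $L^\infty$ bound above. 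A standard continuity/Gr\"onwall argument then produces $T\in(0,T_0]$ and $C_0>0$, depending only on $M_0,H_{ext},v,T_0$, such that $\sup_\eps\big(\sup_{[0,T]}E_\eps+\int_0^T D_\eps\,\d t\big)\le C_0$; in particular $T_\eps\ge T$ by the blow-up criterion for the ODE. I expect the derivation of this inequality --- taming the Lagrange-multiplier term without the unit-length constraint, which is what forces the augmentation of the energy by $\|M^\eps-\J_\eps M_0\|_{L^2}^2$ --- to be the main obstacle; one must also check that this extra term does not create uncontrolled derivative losses when coupled to the higher-order estimate.

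It remains to let $\eps\to0$. The uniform bound gives $\nabla M^\eps$ bounded in $L^\infty(0,T;H^s)$, $M^\eps-\J_\eps M_0$ bounded in $L^\infty(0,T;H^{s+1})$, and $\Delta M^\eps$ bounded in $L^2(0,T;H^s)$; then \eqref{moll-LLG-sketch} bounds $\partial_t M^\eps$ in $L^2(0,T;H^{s-1})$, so the Aubin--Lions lemma (locally in $x$) extracts a subsequence with $M^\eps\to M$ strongly in $C([0,T];H^s_{\mathrm{loc}})$ and $\nabla M^\eps\to\nabla M$ in $L^2(0,T;H^s_{\mathrm{loc}})$, together with the matching weak-$*$ limits. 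This suffices to pass to the limit in every (at most cubic) nonlinear term, so $M$ solves \eqref{LLC-given-v}, satisfies \eqref{bound} by weak lower semicontinuity, has $\|M\|_{L^\infty([0,T]\times\R^d)}\le C_0$, and $\nabla M\in C([0,T];H^s)$ by standard parabolic regularity. Since $M_0\in\mathbb{S}^{d-1}$, $\Gamma(M)$ has the form \eqref{Lagrangian-Multiplier}, and $M$ has exactly the regularity demanded in Lemma \ref{Lmm-|M|=1}, that lemma forces $|M(t,x)|=1$ for all $t\in[0,T]$. For uniqueness, given two such solutions $M_1,M_2$ with the same data, set $N=M_1-M_2\in C([0,T];L^2)$ and run an $L^2$ estimate: the term $\langle N\times(\Delta M_1+H_{ext}),N\rangle$ vanishes identically, $\langle M_2\times\Delta N,N\rangle$ integrates by parts to $-\langle\nabla M_2\times\nabla N,N\rangle$ using $(a\times b)\cdot b=0$, and every remaining term involving $\nabla N$ is absorbed into the dissipation $\|\nabla N\|_{L^2}^2$, leaving $\tfrac{\d}{\d t}\|N\|_{L^2}^2\le\Psi(t)\|N\|_{L^2}^2$ with $\Psi\in L^1(0,T)$; Gr\"onwall together with $N(0)=0$ gives $N\equiv0$.
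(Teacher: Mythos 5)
Your proposal is correct and follows essentially the same strategy as the paper: regularize by mollification, augment the natural energy $\|\nabla M^\eps\|_{H^s}^2$ by the term $\|M^\eps-\J_\eps M_0\|_{L^2}^2$ precisely so as to control $\|M^\eps\|_{L^\infty}$ (which is no longer $1$ for the approximate solution), close a Gr\"onwall inequality on a uniform time interval, pass to the limit, and invoke Lemma \ref{Lmm-|M|=1} to recover $|M|=1$. You correctly identified the central device of the paper's proof — the augmentation of the energy.

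One small discrepancy worth flagging: the paper defines $\J_\eps$ as a sharp Fourier frequency cut-off, and this is not incidental — the idempotency $\J_\eps^2=\J_\eps$ is used to pass from \eqref{ch5-2} to the equivalent system \eqref{ch5-3}, in which $\Delta M^\eps$ appears unmollified, so that pairing with $-\Delta\partial^m M^\eps$ yields the full dissipation $\|\Delta\partial^m M^\eps\|_{L^2}^2$. With a genuine Friedrichs mollifier this reduction is unavailable, and pairing $\J_\eps\Delta\J_\eps M^\eps$ with $-\Delta\partial^m M^\eps$ produces only $\|\J_\eps\Delta\partial^m M^\eps\|_{L^2}^2$; the rest of your argument still closes, but passing this degraded dissipation to the limit in \eqref{bound} then requires an extra step (weak lower semicontinuity plus $\J_\eps\to I$ on test functions). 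Your sketch does not address this, so if you keep the Friedrichs mollifier you should say a word about it, or simply adopt the Fourier cut-off as the paper does. Finally, you supply an explicit $L^2$ uniqueness argument which the paper asserts but does not spell out; your handling of the cross-product terms via $(a\times b)\cdot b=0$ is correct and a welcome addition.
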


\begin{proof}[Proof of Proposition \ref{Prop-Local-LLG}]
	We divide three steps to complete the proof: 1) Construct the approximate system by the mollifier approximate method; 2) Derive a uniform energy bound of the approximate system on a uniform time interval $[0,T]$; 3) By compactness arguments, we take the limit from the approximate system to obtain the solutions to the Cauchy problem \eqref{LLC-given-v}.
	
{\em Step 1:Construct the approximate system.} We first define a mollifier
$$ \J_\eps f := \mathcal{F}^{-1} \big( \mathbf{1}_{|\xi| \leq \frac{1}{\eps}} \mathcal{F} (f) \big) \,,  $$
where the symbol $\mathcal{F}$ stands for the Fourier transform operator and $\mathcal{F}^{-1}$ is its inverse transform. We remark that the mollifier $\J_\eps$ has property $\J^2_\eps = \J_\eps$. Then the approximate system of \eqref{MEL} can be constructed as follows:
\begin{equation}\label{ch5-2}
  \left\{
    \begin{array}{l}
      \partial_{t} M^{\varepsilon} = - \J_\eps ( v \cdot \nabla \J_\eps M^{\varepsilon} ) + \Delta \J_\eps M^{\varepsilon} + H_{ext} \\
      \qquad \qquad + \J_\eps [ \Gamma( \J_\eps M^{\varepsilon}) M^{\varepsilon} ] - \J_\eps [ \J_\eps M^{\varepsilon} \times ( \Delta \J_\eps M^{\varepsilon} + H_{ext} )] \,, \\[1.5mm]
      M^{\varepsilon}|_{t=0} = \J_\eps M_0 \,.
    \end{array}
  \right.
\end{equation}
From the ODE theory, we deduce that for any fixed $\eps > 0$, there exists a maximal $ T_\varepsilon \in (0,T_0]$ such that the approximate system \eqref{ch5-2} admits a unique solution $M^\eps (t,x)$ satisfying $ M^{\varepsilon} \in L^\infty ([0,T_\eps ) \times \R^d) $ and $ \nabla M^\eps \in C(0,T_{\varepsilon};H^s)$. Since $\J^2_\eps = \J_\eps$, we observe that $M^{\varepsilon}$ also solves
\begin{equation}\label{ch5-3}
  \left\{
    \begin{array}{l}
      \partial_{t} M^{\varepsilon}= -\J_\eps( v \cdot \nabla  M^{\varepsilon} )+ \Delta  M^{\varepsilon}+H_{ext}+\J_\eps [\gamma( M^{\varepsilon})M^{\varepsilon} ]- \J_\eps[ M^{\varepsilon} \times ( \Delta M^{\varepsilon}+ H_{ext})]\,,\\[1.5mm]
      M^{\varepsilon}|_{t=0} = \J_\eps M_0 \,.
    \end{array}
  \right.
\end{equation}

{\em Step 2: Uniform energy estimate.} First, we calculate the $L^2$-estimate of the approximate system \eqref{ch5-3}. Multiplying by $\Delta M^{\varepsilon}$ in the first equation of \eqref{ch5-3} and integrating by parts over $x \in \R^d$, we have
 \begin{equation}\label{L2-M}
   \begin{aligned}
     \tfrac{1}{2} \tfrac{\d}{\d t} \| \nabla   M^{\varepsilon} \|^{2}_{L^{2}} + \| \Delta  M^{\varepsilon} \|^{2}_{L^{2}} = & \underset{I_1}{\underbrace{ \langle ( v \cdot \nabla M^{\varepsilon} ), \Delta  M^{\varepsilon} \rangle }} + \underset{I_2}{\underbrace{ \langle ( \Gamma(M^{\varepsilon}) M^{\varepsilon} ), \Delta  M^{\varepsilon} \rangle }} \\
     & + \underset{I_3}{\underbrace{ \langle ( M^{\varepsilon} \times H_{ext} ), \Delta   M^{\varepsilon} \rangle }} - \underset{I_4}{\underbrace{ \langle H_{ext}, \Delta  M^{\varepsilon} \rangle }} \,.
   \end{aligned}
 \end{equation}
By H\"older inequality and Sobolev embedding theory, we estimate that
 \begin{equation*}
   \begin{aligned}
     I_1 \leq & \| v \|_{L^{\infty}} \| \nabla M^{\varepsilon} \|_{L^{2}} \| \Delta M^{\varepsilon} \|_{L^{2}} \leq \| v \|_{H^{2}} \| \nabla M^{\varepsilon} \|_{L^{2}} \| \Delta M^{\varepsilon} \|_{L^{2}} \,, \\
     I_2 \leq & C \| M^{\varepsilon} \|_{L^{\infty}} \| \nabla M^{\varepsilon} \|^{2}_{H^{1}} \| \Delta M^{\varepsilon} \|_{L^{2}} + \| M^{\varepsilon} \|^{2}_{L^{\infty}} \| H_{ext} \|_{L^{2}} \| \Delta M^{\varepsilon} \|_{L^{2}} \,, \\
     I_3 \leq & C \| M^{\varepsilon} \|_{L^{\infty}} \| H_{ext} \|_{L^{2}} \| \Delta M^{\varepsilon} \|_{L^{2}} \,, \\
     I_4 \leq & C  \| H_{ext} \|_{L^{2}} \| \Delta M^{\varepsilon} \|_{L^{2}} \,.
   \end{aligned}
 \end{equation*}
Then, from plugging the previous four inequalities of $I_i$ $(1 \leq i \leq 4)$ into the equality \eqref{L2-M}, we deduce the $L^2$-estimate
 \begin{equation}\label{L2}
  \begin{aligned}
    \tfrac{1}{2} \tfrac{\d}{\d t} \| \nabla   M^{\varepsilon} \|^{2}_{L^{2}} + \| \Delta  M^{\varepsilon} \|^{2}_{L^{2}} \leq & C \Big( \| v \|_{H^{2}} \| \nabla M^{\varepsilon} \|_{L^{2}} + \| M^{\varepsilon} \|_{L^{\infty}} \| \nabla M^{\varepsilon} \|^{2}_{H^{1}} \\
    & + \| M^{\varepsilon} \|^{2}_{L^{\infty}} \| H_{ext} \|_{L^{2}} + \| H_{ext} \|_{L^{2}} \Big) \| \Delta M^{\varepsilon} \|_{L^{2}} \,.
  \end{aligned}
\end{equation}

Second, we estimate the higher order energy bounds of the approximate system \eqref{ch5-3}. For all multi-indexes $m \in \mathbb{N}^d$ with $1\leq |m| \leq N$, we act the $m$-order derivative operator $\partial^{m}$ on the first equation of the system \eqref{ch5-3}, take $L^{2}$-inner product via multiplying by $\Delta \partial^{m} M^{\varepsilon}$ and integrate by parts over $x \in \R^d$. Then we have
 \begin{equation}\label{higher}
   \begin{aligned}
     & \tfrac{1}{2} \tfrac{\d}{\d t} \| \nabla  \partial^{m} M^{\varepsilon} \|^{2}_{L^{2}} + \| \Delta \partial^{m} M^{\varepsilon} \|^{2}_{L^{2}} = \underset{I\!I_1}{\underbrace{ \langle \partial^{m} ( v \cdot \nabla M^{\varepsilon} ) , \Delta \partial^{m} M^{\varepsilon} \rangle }} \\
     & + \underset{I\!I_2}{\underbrace{ \langle \partial^{m} ( \Gamma ( M^{\varepsilon} ) M^{\varepsilon}) , \Delta \partial^{m} M^{\varepsilon} \rangle}}  + \underset{I\!I_3}{\underbrace{ \langle \partial^{m} ( M^{\varepsilon} \times \Delta M^{\varepsilon} ), \Delta \partial^{m}  M^{\varepsilon} \rangle }} \\
     & + \underset{I\!I_4}{\underbrace{ \langle \partial^{m} ( M^{\varepsilon} \times H_{ext} ), \Delta \partial^{m}  M^{\varepsilon} \rangle }} - \underset{I\!I_5}{\underbrace{ \langle \partial^{m} H_{ext} , \Delta \partial^{m} M^{\varepsilon} \rangle}} \,.
   \end{aligned}
 \end{equation}
Now we estimate the quantities in the right hand side of the equality \eqref{higher} term by term. From the H\"older inequality and Sobolev embedding theory, one easily derives that
\begin{equation*}
  \begin{aligned}
    I\!I_1 = & \langle \partial^{m} v \cdot \nabla M^{\varepsilon}, \Delta \partial^{m} M^{\varepsilon} \rangle + \langle v \cdot \nabla \partial^{m} M^{\varepsilon} , \Delta \partial^{m} M^{\varepsilon} \rangle \\
    & + \sum_{\substack{ 0 \neq m^{\prime} \leq m}} C^{m^{\prime}}_{m} \langle \partial^{m-m^{\prime}} v \nabla \partial^{m^{\prime}} M^{\varepsilon}, \Delta \partial^{m} M^{\varepsilon} \rangle \\
    & \leq \| v \|_{H^s} \| \nabla M^{\varepsilon} \|_{H^s} \| \Delta M^{\varepsilon} \|_{H^s} \,.
  \end{aligned}
\end{equation*}
Similarly as the above estimate, by utilizing the H\"older inequality and the Sobolev embedding theory, we have
\begin{equation*}
  \begin{aligned}
    I\!I_2 \leq & C ( \| M^{\varepsilon} \|^2_{L^{\infty}} + \| \nabla M^{\varepsilon} \|^2_{H^s} ) ( \| H_{ext} \|_{H^s} + \| \nabla M^{\varepsilon} \|_{H^s} ) \| \Delta \partial^{m} M^{\varepsilon} \|_{L^{2}} \,,
  \end{aligned}
\end{equation*}
and
\begin{equation*}
  \begin{aligned}
    I\!I_3 \leq & C \| \nabla M^{\varepsilon} \|^2_{H^s} \| \Delta \partial^{m} M^{\varepsilon} \|_{L^{2}} \,,\\
    I\!I_4 \leq & C ( \| M^{\varepsilon} \|_{L^{\infty}} + \| \nabla M^{\varepsilon} \|_{H^s} ) \| H_{ext} \|_{H^s} \| \Delta \partial^{m} M^{\varepsilon} \|_{L^{2}} \,, \\
    I\!I_5 \leq & C \| H_{ext} \|_{H^s} \| \Delta \partial^{m} M^{\varepsilon} \|_{L^{2}} \,.
  \end{aligned}
\end{equation*}
Therefore, by combining all of the above inequalities of $I\!I_i$ $(1 \leq i \leq 5)$, we obtain
\begin{equation}\label{higher-1}
  \begin{aligned}
    \tfrac{1}{2} \tfrac{\d}{\d t} & \| \nabla  \partial^{m} M^{\varepsilon} \|^{2}_{L^{2}} + \| \Delta \partial^{m} M^{\varepsilon} \|^{2}_{L^{2}} \\
    \leq & C ( 1 + \| M^{\varepsilon} \|^2_{L^{\infty}} + \| \nabla M^{\varepsilon} \|^2_{H^s} ) \| H_{ext} \|_{H^s} \| \Delta \partial^{m} M^{\varepsilon} \|_{L^{2}} \\
    + & C ( \| v \|_{H^s} + \| \nabla M^{\varepsilon} \|^2_{H^s} + \| M^{\varepsilon} \|^2_{L^{\infty}} + \| \nabla M^{\varepsilon} \|_{H^s} ) \| \nabla M^{\varepsilon} \|_{H^s} \| \Delta \partial^{m} M^{\varepsilon} \|_{L^{2}}
  \end{aligned}
\end{equation}
holds for all $1 \leq |m| \leq N$. Combining the $L^2$-estimate \eqref{L2} and the higher order derivative estimate \eqref{higher-1}, we have
\begin{equation*}
  \begin{aligned}
    &\tfrac{1}{2} \tfrac{\d}{\d t} \| \nabla  M^{\varepsilon} \|^{2}_{H^s} + \| \Delta  M^{\varepsilon} \|^{2}_{H^s}
    \leq C ( 1 + \| M^{\varepsilon} \|^2_{L^{\infty}} + \| \nabla M^{\varepsilon} \|^2_{H^s} ) \| H_{ext} \|_{H^s} \| \Delta M^{\varepsilon} \|_{H^s} \\
    & + C ( \| v \|_{H^s} + \| \nabla M^{\varepsilon} \|^2_{H^s} + \| M^{\varepsilon} \|^2_{L^{\infty}} + \| \nabla M^{\varepsilon} \|_{H^s} ) \| \nabla M^{\varepsilon} \|_{H^s} \| \Delta  M^{\varepsilon} \|_{H^s} \,.
  \end{aligned}
\end{equation*}
We further use the Young's inequality and then derive that
\begin{equation}\label{higher-order}
  \begin{aligned}
    \tfrac{1}{2} \tfrac{\d}{\d t} \| \nabla  M^{\varepsilon} & \|^{2}_{H^s} + \| \Delta  M^{\varepsilon} \|^{2}_{H^s}
    \leq C ( 1 + \| M^{\varepsilon} \|^4_{L^{\infty}} + \| \nabla M^{\varepsilon} \|^4_{H^s} ) \| H_{ext} \|^{2}_{H^s} \\
    + & C ( \| v \|^{2}_{H^s} + \| \nabla M^{\varepsilon} \|^4_{H^s} + \| M^{\varepsilon} \|^4_{L^{\infty}} + \| \nabla M^{\varepsilon} \|^{2}_{H^s} ) \| \nabla M^{\varepsilon} \|^{2}_{H^s} \,.
  \end{aligned}
\end{equation}

Third, we notice that the norm $\| M^{\varepsilon} \|_{L^{\infty}}$ in the above $H^s$-estimate \eqref{higher-order} is not controlled yet. To deal with it, we estimate as follows:
\begin{equation}\label{M-L-infty}
  \begin{aligned}
    \| M^{\varepsilon} \|_{L^{\infty}} \leq & \| M^{\varepsilon} - \J_\eps M_0 \|_{L^{\infty}} + \| \J_\eps M_0 \|_{L^{\infty}} \\
    \leq & C \| M^{\varepsilon} -\J_\eps M_0 \|_{H^{2}} + 1 \\
    = & C \| M^{\varepsilon} -\J_\eps M_0 \|_{L^{2}} + C \| \nabla M^{\varepsilon} - \J_\eps M_0 \|_{H^{1}} + 1 \\
    = & C \| M^{\varepsilon} - \J_\eps M_0 \|_{L^{2}} + C \| \nabla M^{\varepsilon} \|_{H^{1}} + C \| \J_\eps M_0 \|_{H^{1}} +  1\\
    \leq & \| M^{\varepsilon} - \J_\eps M_0 \|_{L^{2}} + C \| \nabla M^{\varepsilon} \|_{H^s} + C \| M_0 \|_{H^s} + 1 \,,
  \end{aligned}
\end{equation}
where the relation $\|\J_\eps M_0 \|_{L^{\infty}} = | M_0 | =1 $ and the Sobolev embedding $H^2 (\R^d) \hookrightarrow L^\infty (\R^d)$ are utilize. It remains to control the norm $\| M^{\varepsilon} - \J_\eps M_0 \|_{L^{2}}$, which vanishes at $t = 0$. To be more precise, from the LLG equation \eqref{LLC-given-v}, we deduce that
\begin{equation}\label{M-1}
  \begin{aligned}
    \tfrac{1}{2} & \tfrac{\d}{\d t} \| M^{\varepsilon} - \J_\eps M_0 \|_{L^{2}} = \langle \partial_{t} M^{\varepsilon}, M^{\varepsilon} - \J_\eps M_0 \rangle \\
    & = \langle - \J_\eps ( v \cdot \nabla  M^{\varepsilon} ) + \Delta  M^{\varepsilon} + H_{ext} + \J_\eps [ \Gamma ( M^{\varepsilon} ) M^{\varepsilon} ] \\
    & - \J_\eps [ M^{\varepsilon} \times ( \Delta M^{\varepsilon} + H_{ext} ) ] , M^{\varepsilon} - \J_\eps M_0 \rangle \\
    &\leq C ( 1 + \| v \|_{H^s} + \| \nabla M^{\varepsilon} \|_{H^s} + \| M^{\varepsilon} - \J_\eps M_0 \|_{L^{2}} ) \| \nabla M^{\varepsilon} \|_{H^s} \| M^{\varepsilon} - \J_\eps M_0 \|_{L^{2}} \\
    & + C \| M^{\varepsilon} - \J_\eps M_0 \|^2_{L^{2}} (  1 + \| M^{\varepsilon} - \J_\eps M_0 \|_{L^{2}} ) + C ( 1 + \| M^{\varepsilon} - \J_\eps M_0 \|^2_{L^{2}} \\
    & + \| \nabla M^{\varepsilon} \|^2_{H^s} ) \|  H_{ext} \|_{H^s} \| M^{\varepsilon} - \J_\eps M_0 \|_{L^{2}} \,.
  \end{aligned}
\end{equation}
By the inequality \eqref{M-L-infty}, the  higher order estimate \eqref{higher-order} can be written
\begin{equation}\label{higher-order-new}
  \begin{aligned}
    & \tfrac{1}{2} \tfrac{\d}{\d t} \| \nabla  M^{\varepsilon} \|^{2}_{H^s} + \| \Delta  M^{\varepsilon} \|^{2}_{H^s} \leq C ( 1 + \| M^{\varepsilon} - \J_\eps M_0 \|^4_{L^{2}} + \| \nabla M^{\varepsilon} \|^4_{H^s} ) \| H_{ext} \|^{2}_{H^s} \\
    & + C ( \| v \|^{2}_{H^s} + \| \nabla M^{\varepsilon} \|^4_{H^s} + \| M^{\varepsilon} - \J_\eps M_0 \|^4_{L^{2}} + \| \nabla M^{\varepsilon} \|^{2}_{H^s} ) \| \nabla M^{\varepsilon} \|^{2}_{H^s} \,.
  \end{aligned}
\end{equation}
Combining \eqref{M-1} and \eqref{higher-order-new}, we have
\begin{equation}\label{5.12}
  \begin{aligned}
    & \tfrac{\d}{\d t} ( \| \nabla  M^{\varepsilon} \|^{2}_{H^s} + \| M^{\varepsilon} - \J_\eps M_0 \|_{L^{2}} ) + \| \Delta  M^{\varepsilon} \|^{2}_{H^s} \\
    & \leq C ( 1 + \| M^{\varepsilon} - \J_\eps M_0 \|^4_{L^{2}} + \| \nabla M^{\varepsilon} \|^4_{H^s} ) \| H_{ext} \|^{2}_{H^s} \\
    & + C ( \| v \|^{2}_{H^s} + \| \nabla M^{\varepsilon} \|^4_{H^s} + \| M^{\varepsilon} - \J_\eps M_0 \|^4_{L^{2}} + \| \nabla M^{\varepsilon} \|^{2}_{H^s} ) \| \nabla M^{\varepsilon} \|^{2}_{H^s} \\
    & + C ( 1 + \|  v \|_{H^s} + \|  \nabla M^{\varepsilon} \|_{H^s} + \| M^{\varepsilon} - \J_\eps M_0 \|_{L^{2}} ) \|  \nabla M^{\varepsilon} \|_{H^s} \| M^{\varepsilon} - \J_\eps M_0 \|_{L^{2}} \\
    & + C \| M^{\varepsilon} - \J_\eps M_0 \|^2_{L^{2}} ( 1 + \| M^{\varepsilon} - \J_\eps M_0 \|_{L^{2}} ) + C ( 1 + \| M^{\varepsilon} - \J_\eps M_0 \|^2_{L^{2}} \\
    & + \| \nabla M^{\varepsilon} \|^2_{H^s} ) \|  H_{ext} \|_{H^s} \| M^{\varepsilon} - \J_\eps M_0 \|_{L^{2}} \,.
  \end{aligned}
\end{equation}
We introduce the approximate energy functional
\begin{equation*}
  \begin{aligned}
    E_{\varepsilon} (t) = \| \nabla M^{\varepsilon} \|^2_{H^s} + \| M^{\varepsilon} - \J_\eps M_0 \|^2_{L^{2}}
  \end{aligned}
\end{equation*}
and the approximate energy dissipative rate functional
\begin{equation*}
  \begin{aligned}
    D_{\varepsilon} (t) = \| \Delta M^{\varepsilon} \|^2_{H^s} \,.
  \end{aligned}
\end{equation*}
Then the inequality \eqref{5.12} can be rewritten as
\begin{equation}\label{higher-order-energy}
  \begin{aligned}
    & \tfrac{\d}{\d t} E_{\varepsilon} (t) + D_{\varepsilon} (t) \leq C ( 1 + \| v \|^{2}_{H^s} + \| H_{ext} \|^2_{H^s} ) ( 1 + E_{\varepsilon} (t) )^3
  \end{aligned}
\end{equation}
holds for all $t \in [0, T_\varepsilon)$.

Finally, we derive the uniform bounds of the energy functional $E_\varepsilon(t)$ by using the Gr\"onwall arguments. Notice that
\begin{equation*}
\begin{aligned}
E_\varepsilon (0) = \| \nabla M^{\varepsilon} \|^2_{H^s} = \| \nabla \J_\eps M_0 \|^2_{H^s} \leq \| \nabla M_0 \|^2_{H^s} : = E_0 \,.
\end{aligned}
\end{equation*}
We now define
\begin{equation*}
  T^1_\varepsilon = \sup \Big\{ \tau \in [0, T_\varepsilon) ; \sup_{t \in [0,\tau]} E_\varepsilon (t) \leq 2 E_0 \Big\} \geq 0 \,.
\end{equation*}
By the continuity of $E_\eps (t)$ on $[0, T_\varepsilon )$, we know that $T^1_\varepsilon > 0$. Then the inequality \eqref{higher-order-energy} implies that for all $t \in [0,  T^1_\varepsilon )$
\begin{equation*}
  \begin{aligned}
    & \tfrac{\d}{\d t} E_{\varepsilon} (t) + D_{\varepsilon} (t) \leq C ( 1 + \| v \|^{2}_{H^s} + \| H_{ext} \|^2_{H^s} ) ( 1 + 2 E_0 )^2 \big( 1 + E_{\varepsilon} (t) \big) \leq \Upsilon (t) \big( 1 + E_{\varepsilon} (t) \big) \,,
  \end{aligned}
\end{equation*}
where $\Upsilon (t) = C ( 1 + 2 E_0 )^2 ( 1 + \| v (t) \|^{2}_{H^s} + \| H_{ext} (t) \|^2_{H^s} )  \in L^\infty (0, T_0)$ by the conditions of Proposition \ref{Prop-Local-LLG}. Then, by the Gr\"onwall inequality , we have
\begin{equation*}
  \begin{aligned}
    E_\varepsilon(t) \leq & (E_0 + \int_ 0^t \Upsilon(\tau) \d \tau ) \exp \Big( \int_0^t \Upsilon (\tau) \d \tau \Big) \\
    \leq & ( E_0 + \| \Upsilon \|_{L^\infty(0, T_0)} t ) \exp ( \| \Upsilon \|_{L^\infty(0, T_0)} t ) : = G(t) \,,
  \end{aligned}
\end{equation*}
where the function $G(t)$, independent of $\varepsilon$, is increasing on $ [0, T^1_\varepsilon]$ and $G (0) = E_0$. Consequently, there exists a $ T > 0$, independent of $\varepsilon$, such that for all $ t \in [0,T]$, $G(t) \leq 2E_0$. By the definition of $T^1_\varepsilon$, we know that $T^1_\varepsilon \geq T >0$. In other words, for all $\varepsilon > 0$ and $t \in [0,T]$, we have $E_\varepsilon(t) \leq 2 E_0$. Then we obtain the following uniform energy bound
\begin{equation}\label{b}
  \begin{aligned}
    E_\varepsilon (t) + \int_ 0^t D_\varepsilon(\tau ) \d \tau \leq ( 1 + 2 E_0 ) \| \Upsilon \|_{L^\infty (0,T_0)} t \leq ( 1 + 2 E_0 ) \| \Upsilon \|_{L^\infty (0,T_0)} T : = c_0 T
  \end{aligned}
\end{equation}
holds for all $ t \in [0,T]$.

{\em Step 3: Pass to the limits.} By the bounds \eqref{M-L-infty} and \eqref{b} , we have
\begin{equation}\label{M-11}
  \begin{aligned}
    \| M^{\varepsilon}\|_{L^{\infty}([0,T] \times \R^d)} \leq & C \| M^{\varepsilon} - \J_\eps M_0 \|_{L^{2}} + C \| \nabla M^{\varepsilon} \|_{H^s} + C \| \nabla M_0 \|_{H^s} + 1 \\
    \leq & C E^{\frac{1}{2}}_{\varepsilon} (t) + C \| \nabla M_0 \|_{H^s} + 1 \leq C \sqrt{c_0 T} + C \| \nabla M_0 \|_{H^s} + 1 \,.
  \end{aligned}
\end{equation}
We thereby derive from the uniform bounds \eqref{b} and \eqref{M-11} that there exists a $M \in L^\infty ([0,T]\times \R^3)$ satisfying  $\nabla M \in C(0,T;H^s (\R^d) , \Delta M \in L^2(0,T;H^s(\R^d)$ such that
\begin{equation*}
  \begin{aligned}
    & M^\eps \rightarrow M \quad \textrm{weakly-}\star \ \ \textrm{in} \ \ L^\infty ( [0,T] \times \R^d ) \,, \\
    & \nabla M^\eps \rightarrow \nabla M \quad \textrm{weakly-}\star \ \ \textrm{in}\ \ t \geq 0, \textrm{weakly  in}\ \ H^s (\R^d) \,, \\
    & \Delta M^\eps \rightarrow \Delta M \quad \textrm{weakly in}\ \ L^2 ( 0, T; H^s (\R^d) )
  \end{aligned}
\end{equation*}
as $\eps \rightarrow 0$, and $M$ obeys the first equation of \eqref{LLC-given-v} after passing limits in the approximate system \eqref{ch5-3} as $\varepsilon \rightarrow 0 $. More precisely,
\begin{equation*}
  \begin{aligned}
    & \partial_{t} M + v \cdot \nabla M  = \Delta M + H_{ext} + \Gamma (M) M - M \times ( \Delta M + H_{ext} ) \\
    & \Gamma (M) = | \nabla M |^2 - H_{ext} \cdot M , \ M \in L^\infty ( [0,T] \times \R^d) \,.
  \end{aligned}
\end{equation*}
Then Lemma \ref{Lmm-|M|=1} tells us that $M \in S^{d-1}$ holds for all $t \in [0,T]$. Furthermore, \eqref{higher-order-energy} and \eqref{b} imply that energy functional $E_\varepsilon(t)$ is uniformly bounded and equicontinuous in [0,T]. By Arzel\`a-Ascoli theorem, we deduce that $\nabla M \in C(0,T;H^s (\R^d))$ and then the proof of Proposition \ref{Prop-Local-LLG} is finished.
\end{proof}

\section{Local well-posedness with large initial data}\label{Sec: Local-Result}

In this section, we prove the local well-posedness of the evolutionary model for magnetoelastic system \eqref{MEL} with large initial data. We first carefully design a nonlinear iterative approximate system, where the nonlinearity is due to the geometric constraint $|M^{n+1}| = 1$. Then, we derive a uniform energy bound of the iterative approximate system on a time interval $[0,T]$, where $T > 0$ is independent of $n \geq 0$. Finally, by standard compactness arguments, we can prove the local well-posedness of the Cauchy problem \eqref{MEL}-\eqref{IC-MEL}.

\subsection{The iterative approximate system}
In this subsection, we construct the approximate system by iteration. More precisely, the iterative approximate system is constructed as follows: for all integer $n\geq 0$
\begin{equation}\label{MEL-app}
  \left\{
    \begin{array}{l}
      \partial_t v^{n+1} + v^{n} \cdot \nabla v^{n} + \nabla p^{n+1} + \nabla \cdot (\nabla M^{n} \odot \nabla M^{n} - F^{n} (F^{n})^{\top}) \\
      \qquad \qquad \qquad = \nu \Delta v^{n+1} + \nabla H_{ext} M^{n}  \,, \\
      \nabla \cdot v^{n+1}= 0 \,, \\
      \partial_t F^{n+1} + v^{n} \cdot \nabla F^{n} = \nabla v^{n} F^{n} \,, \\
      \partial_t M^{n+1} + v^{n} \cdot \nabla M^{n+1}  =\Delta M^{n+1} + H_{ext} + \Gamma(M^{n+1}) M^{n+1} \\
      \qquad \qquad \qquad \qquad \qquad \ \ \ - M^{n+1} \times ( \Delta M^{n+1} + H_{ext})\,,\\
      | M^{n+1} | = 1 \,,\\
      (v^{n+1}, F^{n+1}, M^{n+1} ) |_{t=0} = ( v_0(x), F_0(x), M_0(x) ) \in \R^d \times \R^d \times \mathbb{S}^{d-1} \,.
    \end{array}
  \right.
\end{equation}
The iterative starts form $n=0$ with
\begin{equation*}
  \begin{aligned}
    (v^{0} (t,x), F^{0} (t,x), M^{0} (t,x) ) = (v_0 (x), F_0 (x), M_0 (x) ) \,.
  \end{aligned}
\end{equation*}

We first give the existence result of the iterative approximate system \eqref{MEL-app} as follows:

\begin{lemma}\label{Lmm-Exist-Appr}
	Suppose that $s \geq 2$ and the initial data $(v_0, F_0, M_0) \in \R^d \times \R^{d \times d} \times S^{d - 1}$ satisfies $v_0, F_0, \nabla M_0 \in H^s$. Then there is a maximal number $T^*_{n+1} > 0$ such that the system \eqref{MEL-app} admits a unique solution $(v^{n+1}, F^{n+1}, M^{n+1})$ satisfying $ v^{n+1} \in C(0,T^*_{n+1}; H^s ) \cap L^2(0,T^*_{n+1}; H^{s+1})$ and $ F^{n+1}, \nabla M^{n+1} \in C(0,T^*_{n+1}; H^s )$.
\end{lemma}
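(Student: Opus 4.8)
The plan is to argue by induction on $n\ge0$, exploiting the fact that, once $(v^n,F^n,M^n)$ is known, the three equations of \eqref{MEL-app} for $(v^{n+1},F^{n+1},M^{n+1})$ are \emph{decoupled}, each being linear in its own unknown. For the base step $n=0$ one only checks that the constant-in-time triple $(v^0,F^0,M^0)=(v_0,F_0,M_0)$ has the asserted regularity on any $[0,T]$, satisfies $\nabla\cdot v^0=0$, and obeys $|M^0|=1$. So assume that $(v^n,F^n,M^n)$ has been built on some interval $[0,T^*_n]$ with $\nabla\cdot v^n=0$, $|M^n|=1$, $v^n\in C(0,T^*_n;H^s)\cap L^2(0,T^*_n;H^{s+1})$ and $F^n,\nabla M^n\in C(0,T^*_n;H^s)$; I would then solve the three sub-problems one at a time.

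For the velocity, the $v^{n+1}$-equation is the linear nonstationary Stokes system $\partial_t v^{n+1}-\nu\Delta v^{n+1}+\nabla p^{n+1}=g^{n+1}$, $\nabla\cdot v^{n+1}=0$, $v^{n+1}|_{t=0}=v_0$, with prescribed source $g^{n+1}=-v^n\cdot\nabla v^n-\nabla\cdot(\nabla M^n\odot\nabla M^n-F^n(F^n)^\top)+(\nabla H_{ext})^\top M^n$. Since $H^s(\R^d)$ is a Banach algebra for $s\ge2$, $d\le3$, the induction hypothesis together with $H_{ext}\in L^\infty(\R^+;H^s)$ gives $g^{n+1}\in L^\infty(0,T^*_n;H^{s-1})\subset L^2(0,T^*_n;H^{s-1})$, so the classical maximal-regularity theory for the Stokes operator (or a Galerkin approximation) yields a unique $v^{n+1}\in C(0,T^*_n;H^s)\cap L^2(0,T^*_n;H^{s+1})$ with $\partial_t v^{n+1}\in L^2(0,T^*_n;H^{s-1})$; the Leray projection disposes of $\nabla p^{n+1}$ and enforces $\nabla\cdot v^{n+1}=0$, and by linearity $v^{n+1}$ exists on all of $[0,T^*_n]$.

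For the deformation gradient, the $F^{n+1}$-equation is a linear first-order equation whose principal part is transport along the already-constructed divergence-free field $v^n$ and whose remaining terms are of lower order and linear (involving at most one derivative of $v^n$ and of $F^n$); a direct $H^s$ energy estimate---in which the incompressibility $\nabla\cdot v^n=0$ removes the top-order transport contribution exactly as in \eqref{Cancellation-F-H}, the lower-order commutators being absorbed by Sobolev product estimates and the $L^2_tH^{s+1}$-smoothing of $v^n$---produces a unique $F^{n+1}\in C(0,T^*_n;H^s)$ with no loss of derivatives. For the magnetization, the $M^{n+1}$-equation is precisely the LLG system \eqref{LLC-given-v} with background velocity $v=v^n$ and Lagrangian multiplier of the form \eqref{Lagrangian-Multiplier}; Proposition \ref{Prop-Local-LLG} then supplies a maximal time $T^*_{n+1}\in(0,T^*_n]$ and a unique $M^{n+1}$ with $M^{n+1}(t)\in\mathbb{S}^{d-1}$, $\nabla M^{n+1}\in C(0,T^*_{n+1};H^s)$ and $\Delta M^{n+1}\in L^2(0,T^*_{n+1};H^s)$. (When $n=0$ the field $v^0=v_0$ belongs only to $L^\infty_tH^s$; this causes no trouble, since the energy estimates behind Proposition \ref{Prop-Local-LLG} use $v$ solely through $\|v\|_{L^\infty_tH^s}$, while the propagation of $|M|=1$ in Lemma \ref{Lmm-|M|=1} uses $v$ only through $\nabla\cdot v=0$.) Restricting $v^{n+1}$ and $F^{n+1}$ to $[0,T^*_{n+1}]$ completes the inductive step, and uniqueness of the triple at level $n+1$ comes from linearity of the first two problems and from the uniqueness part of Proposition \ref{Prop-Local-LLG}. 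Note that $T^*_{n+1}$ a priori depends on $n$; extracting a lower bound on $T^*_{n+1}$ uniform in $n$ is the business of the next subsection, not of this lemma.

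The step I expect to need the most care---indeed the reason the scheme is arranged in this order---is keeping the geometric constraint $|M^{n+1}|=1$ alive: one cannot impose it by hand on the solution of an approximate or modified equation, but here it is \emph{forced} by the special structure of the Lagrangian multiplier \eqref{Lagrangian-Multiplier} via Lemma \ref{Lmm-|M|=1}, and it is already packaged inside Proposition \ref{Prop-Local-LLG}. Once that is granted, the remaining work is routine linear theory; the only genuine bookkeeping is verifying that the source of each of the three decoupled sub-problems is regular enough to close the $H^s$ estimates, where the incompressibility $\nabla\cdot v^n=0$ is invoked repeatedly to avoid any derivative loss in the transport-type terms.
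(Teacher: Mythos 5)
Your overall strategy matches the paper's: the three equations for $(v^{n+1}, F^{n+1}, M^{n+1})$ in \eqref{MEL-app} are treated as \emph{decoupled} linear problems, the $v^{n+1}$-equation is solved as a nonstationary Stokes system with known source, the $M^{n+1}$-equation is the LLG problem \eqref{LLC-given-v} with background velocity $v^n$ handled by Proposition~\ref{Prop-Local-LLG}, and $T^*_{n+1}$ is the minimum of the three maximal existence times. Your remark about the base case $n = 0$ (that $v^0 = v_0$ is only in $L^\infty_t H^s$, not $L^2_t H^{s+1}$, but that the estimates behind Proposition~\ref{Prop-Local-LLG} use $v$ only through $\|v\|_{L^\infty_t H^s}$ and through $\nabla \cdot v = 0$) is a genuine refinement the paper glosses over.

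However, you have misread the $F^{n+1}$-equation. In \eqref{MEL-app} it is
\begin{equation*}
\partial_t F^{n+1} + v^n \cdot \nabla F^{n} = \nabla v^n\, F^{n} \,,
\end{equation*}
so the advection term acts on the \emph{previous} iterate $F^n$, not on $F^{n+1}$. The unknown $F^{n+1}$ appears only under $\partial_t$, making this a pointwise-in-$x$ linear ODE, solved by directly integrating in time; this is exactly what the paper means by ``a linear ODE system \ldots in which the spatial variables can be regarded as the parameters.'' Your proposed $H^s$ transport-energy argument---where $\nabla \cdot v^n = 0$ cancels the top-order advection of $F^{n+1}$ and the commutators are absorbed by product estimates together with the $L^2_t H^{s+1}$ smoothing of $v^n$---is the right argument for the \emph{different} equation $\partial_t F^{n+1} + v^n \cdot \nabla F^{n+1} = \nabla v^n F^n$, not the one written in the paper. (Also, the cancellation that removes $\langle v \cdot \nabla \partial^m F, \partial^m F \rangle$ is simply the divergence-free integration by parts, not \eqref{Cancellation-F-H}, which instead pairs the elastic stress $\nabla\cdot(FF^\top)$ in the $v$-equation against the $\nabla v\, F$ forcing in the $F$-equation.) As a result this step of your proof addresses a modified scheme rather than the system actually stated in the lemma; the paper's own argument---just treat the $F^{n+1}$-equation as an ODE in $t$---is both simpler and the one that corresponds to \eqref{MEL-app}.
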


\begin{proof}[Proof of Lemma \ref{Lmm-Exist-Appr}]
	For the case $n+1$, the vector-valued functions $v^n, M^n$ and the matrix-valued function $F^n$ are known. That is, the velocity equation of $v^{n+1}$ is a linear stokes type system,
	\begin{equation}\label{MEL-4}
	  \left\{
	    \begin{array}{l}
	      \partial_t v^{n+1} + v^{n} \cdot \nabla v^{n} + \nabla p^{n+1} + \nabla \cdot (\nabla M^{n} \odot \nabla M^{n} - F^{n} (F^{n})^{\top}) \\
	      \qquad \qquad \qquad = \nu \Delta v^{n+1} + \nabla H_{ext} M^{n}  \,, \\
	      \nabla \cdot v^{n+1}= 0 \,, \\
	      v^{n+1}|_{t=0} = v_0 \in \R^d \,,
	    \end{array}
	  \right.
	\end{equation}
	which admits a unique solution $ v^{n+1} \in C(0,\widehat{T}_{n+1}; H^s) \cap L^2(0,\widehat{T}_{n+1}; H^{s+1})$ on the maximal time interval $[0,\widehat{T}_{n+1} )$. Moreover, the matrix-valued function $F^{n+1}$ obeys a linear ODE system
	\begin{equation}
	  \left\{
	    \begin{array}{l}
	      \partial_t F^{n+1} + v^{n} \cdot \nabla F^{n} = \nabla v^{n} F^{n} \,, \\
	      F^{n+1} |_{t=0} = F_0 \in \R^{d \times d} \,,
	    \end{array}
	  \right.
	\end{equation}
	in which the spatial variables can be regarded as the parameters. Thus the evolution of $F^{n+1}$ admits a unique solution on the maximal interval $[ 0, \overline{T}_{n+1} )$. By the regularities of $v^n$, $F^n$ and $F_0$, one easily derives $ F^{n+1} \in C( 0, \overline{T}_{n+1} ; H^s ) $. Finally, the orientation equation of $M^{n+1}$ is the LLG equation with a given bulk velocity $v^{n}$
	\begin{equation}\label{MEL-5}
	  \left\{
	    \begin{array}{l}
	      \partial_t M^{n+1} + v^{n} \cdot \nabla M^{n+1}  =\Delta M^{n+1} + H_{ext} + \Gamma(M^{n+1}) M^{n+1} \\
	      \qquad \qquad \qquad \qquad \qquad \ \ \ - M^{n+1} \times ( \Delta M^{n+1} + H_{ext})\,,\\
	      M^{n+1}|_{t=0} =M_0 \in S^{d - 1} \,.
	    \end{array}
	  \right.
	\end{equation}
	By employing Proposition \ref{Prop-Local-LLG}, the Cauchy problem \eqref{MEL-5} has a unique solution $M^{n+1}$ satisfying $\nabla M^{n+1} \in C(0,\widetilde{T}_{n+1}; H^s)$ on the maximal time interval $[0,\widetilde{T}_{n+1})$. We denote by
	\begin{equation*}
	  \begin{aligned}
	    T^*_{n+1} = \min \{ \widehat{T}_{n+1}, \overline{T}_{n+1}, \widetilde{T}_{n+1} \} > 0 \,,
	  \end{aligned}
	\end{equation*}
	and then the proof of Lemma \ref{Lmm-Exist-Appr} is finished.	
\end{proof}

We remark that $T^{*}_{n+1}\leq T^{*}_{n}$.

\subsection{Uniform energy bounds of the iterative approximate system}

The {\em key point} to prove the local well-posedness is to seek a positive lower bound of $T^{*}_{n+1}$ and the uniform energy of the iterative approximate system \eqref{MEL-app}, which will be shown in Lemma \ref{lem-4}. Then, by the compactness argument and Lemma \ref{Lmm-|M|=1}, we can pass to the limits in the system \eqref{MEL-app} and then reach our goal, which is a standard process. We define the following iterative approximate energy functional $E_{n+1} (t)$ and dissipative rate functional $D_{n+1} (t)$
\begin{equation*}
  \begin{aligned}
    E_{n+1} (t) & := \| v^{n+1} \|^2_{H^s} + \| F^{n+1} \|^2_{H^s} + \| \nabla M^{n+1}\|^2_{H^s} \,, \\
   D_{n+1} (t) & := \nu \| \nabla v^{n+1} \|^2_{H^s} + \| \Delta M^{n+1} \|^{2}_{H^s} \,,
  \end{aligned}
\end{equation*}
and precisely state our key lemma.

\begin{lemma}\label{Lmm-Lower-Bnd-T}
Assume that $(v^{n+1}, F^{n+1},M^{n+1}) $ is the solution to the iterative approximate system \eqref{MEL-app} and we define
\begin{equation*}
  T_{n+1} = \sup \Big\{ \tau \in [0,T^{*}_{n+1} ) ; \sup_{t \in [0,\tau]} E_{n+1} (t)+\int_0^\tau D_{n+1} (t) \d t \leq  B \Big\} \geq 0 \,,
\end{equation*}
where $T^{*}_{n+1}> 0$ is the maximal existence time of the iterative approximate system \eqref{MEL-app}. Then for any fixed $ B > \mathcal{E}_0 $, there is a constant $T>0$, depends only on $H_{ext}, \mathcal{E}_0, \nu, s$, such that
$$T_{n+1} > T>0$$
for all $n\geq 0$.
\end{lemma}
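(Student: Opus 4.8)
The plan is to run a bootstrap/continuity argument on the iterative approximate energy $E_{n+1}(t)$, entirely parallel to the a priori estimate of Proposition \ref{Prop-AE1} but carried out on the linearized system \eqref{MEL-app}. The point is that every nonlinear product in \eqref{MEL-app} now has a ``frozen'' factor carrying the index $n$, so the estimates close in terms of the \emph{previous} energy $E_n(t)$ rather than $E_{n+1}(t)$; since by the induction hypothesis $\sup_{[0,T]}E_n + \int_0^T D_n \le B$ on the (not-yet-identified) common interval, we gain a bound on $E_{n+1}$ depending only on $B$, $\mathcal{E}_0$, $\|H_{ext}\|$, $\nu$, $s$, and $T$. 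Then one chooses $T$ small enough, depending only on those quantities, that this bound is again $\le B$, which by the definition of $T_{n+1}$ and a continuity argument forces $T_{n+1}\ge T$.

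\medskip
\noindent\textbf{Step 1 (energy identity for the linearized system).} For each multi-index $|m|\le s$, apply $\partial^m$ to the $v^{n+1}$-, $F^{n+1}$- and $M^{n+1}$-equations of \eqref{MEL-app}, pair with $\partial^m v^{n+1}$, $\partial^m F^{n+1}$, $\Delta\partial^m M^{n+1}$ respectively, integrate by parts, and add. Here one must check which cancellations of type \eqref{Cancellation-M}--\eqref{Cancellation-F} survive: the cross terms $\langle\nabla\cdot\partial^m(\nabla M^n\odot\nabla M^n),\partial^m v^{n+1}\rangle$ and $\langle v^{n+1}\cdot\nabla M^{n+1},\Delta\partial^m M^{n+1}\rangle$ do \emph{not} cancel since they involve different indices; they must instead be estimated directly, which is harmless because they are controlled by $E_n^{1/2}E_{n+1}^{1/2}D_{n+1}^{1/2}$ and similar. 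This is the one genuinely new bookkeeping point compared to Proposition \ref{Prop-AE1}.

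\medskip
\noindent\textbf{Step 2 (term-by-term estimates).} Bound all resulting terms using H\"older, the Sobolev embeddings $H^1\hookrightarrow L^p$ ($p=3,4$), $H^2\hookrightarrow L^\infty$, and — crucially — the geometric constraint $|M^{n+1}|=1$ provided by Lemma \ref{Lmm-|M|=1} (applied to \eqref{MEL-5} with background velocity $v^n$), which makes $\|M^{n+1}\|_{L^\infty}=1$ available for free and lets one reuse verbatim the bounds on $C_5^m$ from \eqref{C51-m}--\eqref{C52-m}. The outcome is a differential inequality of the schematic form
\begin{equation*}
  \tfrac12\tfrac{\d}{\d t}E_{n+1}(t) + D_{n+1}(t) \le C\big(1+\|H_{ext}\|_{H^{s+1}}\big)\big(1+E_n(t)+E_{n+1}(t)\big)^{3/2}\big(1+D_n^{1/2}(t)+D_{n+1}^{1/2}(t)\big),
\end{equation*}
and absorbing the $D_{n+1}^{1/2}$ term with Young's inequality gives, after using $E_n+\int_0^t D_n\,\d s\le B$,
\begin{equation*}
  \tfrac{\d}{\d t}E_{n+1}(t) + D_{n+1}(t) \le C_1(B,\|H_{ext}\|,\nu,s) + C_2(B,\|H_{ext}\|,\nu,s)\,D_n(t),
\end{equation*}
valid on $[0,T_{n+1})$, where $C_1,C_2$ are \emph{independent of $n$}.

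\medskip
\noindent\textbf{Step 3 (Gr\"onwall and choice of $T$).} Integrating on $[0,t]$ with $t<T_{n+1}$ and using $\int_0^t D_n\,\d s\le B$ yields
\begin{equation*}
  \sup_{[0,t]}E_{n+1}+\int_0^t D_{n+1}\,\d s \ \le\ \mathcal{E}_0 + C_1 T + C_2 B.
\end{equation*}
Here one sees the subtlety: the right-hand side is \emph{not} automatically $\le B$ just by shrinking $T$, because of the $C_2 B$ term coming from the non-dissipative transport structure of $F$. This is resolved exactly as in the philosophy of the paper — one must instead close the $F$-estimate without losing a derivative, i.e. estimate $\langle\nabla\partial^m v^n F^n,\partial^m F^{n+1}\rangle$ by $\|\nabla v^n\|_{H^s}\|F^n\|_{H^s}\|F^{n+1}\|_{H^s}$ so that the dangerous factor is $\int_0^t\|\nabla v^n\|_{H^s}\|F^n\|_{H^s}\,\d s \le \big(\int_0^t \nu\|\nabla v^n\|_{H^s}^2\big)^{1/2}\big(\int_0^t \nu^{-1}\|F^n\|_{H^s}^2\big)^{1/2}\le \nu^{-1/2}B^{1/2}\cdot(BT)^{1/2}$, which \emph{does} carry a factor $T^{1/2}$. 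With this refinement the estimate becomes $\sup_{[0,t]}E_{n+1}+\int_0^t D_{n+1}\,\d s \le \mathcal{E}_0 + (C_1 T + C_2\sqrt{T}\,)\,\Phi(B)$ for some increasing $\Phi$. Now fix any $B>\mathcal{E}_0$ and choose $T=T(B,\mathcal{E}_0,\|H_{ext}\|,\nu,s)>0$ so small that $\mathcal{E}_0 + (C_1 T+C_2\sqrt T)\Phi(B)\le B$. Then, since $E_{n+1}$ is continuous and $E_{n+1}(0)=\mathcal{E}_0<B$, a standard continuity/bootstrap argument shows $\sup_{[0,\tau]}E_{n+1}+\int_0^\tau D_{n+1}<B$ for all $\tau<\min(T,T^*_{n+1})$, hence the defining set for $T_{n+1}$ contains $[0,\min(T,T^*_{n+1}))$, so $T_{n+1}\ge\min(T,T^*_{n+1})$; combined with the (separately established, and part of the same induction) lower bound $T^*_{n+1}\ge T$ coming from Lemma \ref{Lmm-Exist-Appr} with data controlled by $B$, we conclude $T_{n+1}\ge T>0$ uniformly in $n$.

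\medskip
\noindent\textbf{Main obstacle.} The crux is \textbf{Step 3}: the deformation gradient $F$ has no dissipation, so a naive Gr\"onwall loop would feed $\int_0^t D_n\,\d s\le B$ back with an $O(1)$ (not $o(1)$ in $T$) constant and the bootstrap would fail. One has to exploit the viscous dissipation of $v$ to turn every ``dangerous'' $F$-term into one carrying a positive power of $T$ via Cauchy--Schwarz in time against $\int_0^t\nu\|\nabla v^n\|_{H^s}^2\,\d t\le B$. Everything else — Steps 1 and 2 — is a routine transcription of Proposition \ref{Prop-AE1}, with the pleasant simplification (emphasized in the introduction) that $|M^{n+1}|=1$ holds exactly by Lemma \ref{Lmm-|M|=1}, so no approximation of the constraint is needed.
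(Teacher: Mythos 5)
Your proposal is essentially correct and follows the same strategy as the paper: derive a differential inequality for $E_{n+1}$ whose coefficient involves the previous iterate only through $E_n$ and $D_n^{1/2}$, integrate, and use Cauchy--Schwarz \emph{in time} against $\int_0^t D_n\,\d\tau\le B$ to extract a factor $\sqrt{t}$, which is what lets the bootstrap close for small $T$. This is precisely what the paper does: its bound (4.2) keeps the factor $E_n^{1/2}D_n^{1/2}$ untouched, and (4.4) then gives $\int_0^t\Theta_{n}\,\d\tau\le C(1+B)^2(t+\sqrt{t})$. Your Step~3 recovers exactly this after you recognize that the Young's inequality in Step~2 (which converts $D_n^{1/2}$ into $D_n$) destroys the gain — that is a genuine misstep in your first pass, but you diagnose and repair it correctly in the ``Main obstacle'' discussion; the corrected version matches the paper. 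Your observation in Step~1 that the $v$--$M$ and $v$--$F$ cancellations are broken by the index mismatch and must be estimated directly is also the right point (minor slip: the convective term in the $M^{n+1}$-equation of \eqref{MEL-app} is $v^n\cdot\nabla M^{n+1}$, not $v^{n+1}\cdot\nabla M^{n+1}$, but that only makes your point stronger).

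The one organizational difference worth flagging: you frame the argument as a clean induction on a \emph{common} interval $[0,T]$, which requires you to establish $T^*_{n+1}\ge T$ inside the same induction (you note this but leave it as a pointer to Lemma~\ref{Lmm-Exist-Appr}). The paper instead works only on $[0,T_{n+1})\subset[0,T^*_n)$ (using $T^*_{n+1}\le T^*_n$), observes that the claim is trivial if $\{T_n\}$ is eventually increasing, and otherwise extracts the strictly decreasing subsequence $\{T_{n_p}\}$ for which $T_{n_p-1}>T_{n_p}$, so that the $B$-bound on the previous iterate is automatically available on the full interval $[0,T_{n_p}]$ without any separate lower bound on the maximal existence time. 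That subsequence device is what makes the paper's proof self-contained; your version needs a continuation/blow-up criterion for the quasilinear LLG step of the iteration to complete the ``$T^*_{n+1}\ge T$'' half, which is standard but should be stated. Apart from this bookkeeping, the two proofs are the same.
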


\begin{proof}[Proof of Lemma \ref{Lmm-Lower-Bnd-T}]
By the continuity of the iterative approximate energy functional $E_{n+1} (t)$, we known that $T_{n+1}>0$. If the sequence ${T_n;n=1,2,3,...}$ is increasing, then $ T_n \geq T_1 > 0 $ and the conclusion is obviously holds.We thereby only need consider the case that the sequence $\{T_n\}$ is not increasing. Now we choose a strictly increasing sequence $\{n_p\}^{\Lambda}_{p=1}$ as follows:
\begin{equation}\label{Def-np}
  \begin{aligned}
    n_1 = 1, \quad n_{p+1} = \min \big\{ n; n > n_p, T_n  <T_{n_{p}} \big\} \,.
  \end{aligned}
\end{equation}
If $ \Lambda < \infty$, the conclusion also automatically holds. Consequently, we merely need to consider the case $\Lambda = \infty$. By the definition of $\{ n_p \}$ in \eqref{Def-np}, we know that $ \{ T_{n_{p}} \}^\infty_{p=1}$ is strictly decreasing, so that our goal is to prove
$$ \lim_{p \rightarrow \infty} T_{n_{p}}> 0 \,. $$

From employing the almost same arguments and process in deriving the a priori
estimates in Proposition \ref{Prop-AE1}, we deduce that
\begin{equation}\label{eq4}
  \begin{aligned}
    & \tfrac{1}{2} \tfrac{\d}{\d t} \big( \| v^{n+1} \|^{2}_{H^s} + \| F^{n+1} \|^{2}_{H^s} + \| \nabla M^{n+1} \|^{2}_{H^s} \big) + \nu \| \nabla v^{n+1} \|^{2}_{H^s} + \| \Delta M^{n+1} \|^{2}_{H^s}\\
    & \leq C ( \| v^{n}\|_{H^s} \| \nabla v^{n} \|_{H^s} + \| \nabla M^{n} \|^{2}_{H^s} + \| H_{ext} \|_{H^{s+1}} ) \| v^{n+1} \|_{H^s} \\
    & + C \| F^{n} \|^{2}_{H^s} \| \nabla v^{n+1} \|_{H^s}  + C ( \| \nabla v^{n} \|^{2}_{H^s} \| F^{n} \|^{2}_{H^s} ) \| F^{n+1} \|^{2}_{H^s} \\
    & + C ( \| v^{n} \|^{2}_{H^s} + \| \nabla M^{n+1} \|^{2}_{H^s} + \| H_{ext} \|^{2}_{H^s} + \| \nabla M^{n+1} \|^{4}_{H^s} ) \| \nabla M^{n+1} \|^{2}_{H^s} \,,
 \end{aligned}
\end{equation}
which immediately reduces to
\begin{equation}\label{Iter-Uniform-1}
  \begin{aligned}
    \tfrac{\d}{\d t} E_{n+1} (t) + D_{n+1} (t) \leq  C ( 1 + E^{2}_{n} (t) + E^{\frac{1}{2}}_{n} (t) D^{\frac{1}{2}}_{n} (t) ) (1 + E_{n+1} (t) )^3
 \end{aligned}
\end{equation}
for all $t \in [ 0, T_{n+1}^* )$. Here $E_n (t)$ and $D_n (t)$ are both well-defined, since $T_{n+1}^* \leq T_n^*$.

Recalling the definition of $\{ n_p \}_{p=1}^\infty$ in \eqref{Def-np}, we know that for any $N < n_p$
\begin{equation*}
  \begin{aligned}
    T_N > T_{n_p} \,.
  \end{aligned}
\end{equation*}
We take $n = n_p - 1$ in \eqref{Iter-Uniform-1}, and then by the definition of $T_n$ we have
\begin{equation}\label{Iter-Uniform-2}
  \begin{aligned}
    \tfrac{\d}{\d t} E_{n_p} (t) + D_{n_p} (t) \leq \Theta_{n_p - 1} (t) \big[ 1 + E_{n_p} (t) \big]^3 \leq (1 + \mathcal{E}_0 )^2 \Theta_{n_p - 1} (t) \big[ 1 + E_{n_p} (t) \big]
  \end{aligned}
\end{equation}
for all $t \in [ 0, T_{n_p} ]$, where $ \Theta_{n_p - 1} (t) = C \Big( 1 + E^{2}_{n_p - 1} (t) + E^{\frac{1}{2}}_{n_p - 1} (t) D^{\frac{1}{2}}_{n_p - 1} (t) \Big) > 0 $ belongs to $L^1 ([0, T_{n_p}])$. Moreover, from the definition of $T_{n_p-1}$ and the fact $T_{n_p - 1} > T_{n_p}$, we deduce that
\begin{equation}\label{Iter-Uniform-3}
  \begin{aligned}
    \int_0^t \Theta_{n_p - 1} (\tau) \d t \leq C \big[ (1 + B^2) t + B \sqrt{t} \ \big] \leq C ( 1 + B )^2 ( t + \sqrt{t} \ )
  \end{aligned}
\end{equation}
for all $t \in [ 0, T_{n_p} ]$. Noticing that $E_{n_{p}} (0) = \mathcal{E}_0$, we solve the ODE equation \eqref{Iter-Uniform-2} that for all $t \in [ 0, T_{n_p} ]$
\begin{equation*}
  \begin{aligned}
    E_{n_{p}} (t) \leq & - 1 + ( 1 + \mathcal{E}_0 ) \exp \Big[ ( 1 + \mathcal{E}_0 )^2 \int_0^t \Theta_{n_p - 1} (\tau) \d t \Big] \\
    \leq & \underbrace{ - 1 + ( 1 + \mathcal{E}_0 ) \exp \Big[ C ( 1 + \mathcal{E}_0 )^2 ( 1 + B )^2 ( t + \sqrt{t} \ ) \Big] }_{G(t)} \,,
 \end{aligned}
\end{equation*}
where the function $G(t)$ is strictly increasing, continuous on $[0,T_{n_p}]$ and $G(0) = \mathcal{E}_0$. Plugging the above inequality into the ODE inequality \eqref{Iter-Uniform-2} and then integrating on $[0, t]$ for
any $ t \in  [0, T_{n_p} ]$, we estimate that
\begin{equation*}
  \begin{aligned}
    E_{n_{p}} (t) + \int_0^t D_{n_{p}} (\tau) \d \tau \leq & \mathcal{E}_0 + ( 1 + \mathcal{E}_0 )^2 \int_0^t ( 1 + G(\tau) ) \Theta_{n_p - 1} (\tau) \d \tau \\
    \leq &  \mathcal{E}_0 + ( 1 + \mathcal{E}_0 )^2 ( 1 + G(t) ) \int_0^t  \Theta_{n_p - 1} (\tau) \d \tau \\
    \leq & \underbrace{ \mathcal{E}_0 + C ( 1 + \mathcal{E}_0 )^2 ( 1 + B )^2 ( 1 + G(t) ) ( t + \sqrt{t} \ ) }_{H(t)} \,,
  \end{aligned}
\end{equation*}
where we utilize the monotonicity of the function $G(t)$ and the bound \eqref{Iter-Uniform-3}. One notices that the function $H(t)$ is continuous and strictly increasing on $[0,T_{n_p}]$ with $H(0) = \mathcal{E}_0$. Consequently, for any $B > \mathcal{E}_0$, there is a $t^* = t^* (B) > 0$ such that
\begin{equation*}
  \begin{aligned}
    H(t) \leq B
  \end{aligned}
\end{equation*}
holds for all $t \in [0, t^*]$, which immediately yields that
\begin{equation*}
  \begin{aligned}
    E_{n_{p}}(t))+\int_0^t D_{n_{p}}(\tau)d\tau  \leq B
  \end{aligned}
\end{equation*}
for all $t \in [0, t^*]$. By the definition of $T_n$, we derive that $T_{n_{p}}\geq t ^{*}> 0$, hence
$$ T=\lim_{p \rightarrow \infty} T_{n_{p}}\geq t^{*}> 0 \,.$$
Consequently, we complete the proof of Lemma \ref{Lmm-Lower-Bnd-T}.
\end{proof}

\subsection{Proof of Theorem \ref{Thm-1}: local well-posedness}

By Lemma \ref{Lmm-Lower-Bnd-T}, we know that for any fixed $B > \mathcal{E}_0$, there is a $T>0$ such that for all integer $n \geq 0$ and $t \in [0,T]$
 \begin{equation*}
    \begin{aligned}
      \sup_{t \in [0,T]} & \Big{(} \| v^{n+1} \|^2_{H^s} +  \| F ^{n+1} \|^2_{H^s} + \| \nabla M^{n+1} \|^2_{H^s} \Big{)} \\
      & + \int_0^T \Big{(} \nu \| \nabla v^{n+1} \|^2_{H^s} + \| \Delta M ^{n+1} \|^2_{H^s}  \Big{)} \d t \leq B \,.
    \end{aligned}
  \end{equation*}
Then, by compactness arguments and Lemma \ref{Lmm-|M|=1}, we get vector-valued functions $(v, M) \in \R^d \times \mathbb{S}^{d-1}$ and matrix-valued function $F \in \R^{d \times d}$ satisfying $ v, F, \nabla M \in L^\infty (0,T; H^s (\R^d)) $ and $\nabla v , \Delta M \in L^2 (0,T; H^s (\R^d))$, which solve the evolutionary model \eqref{MEL} for magnetoelasticity with the initial conditions \eqref{IC-MEL}. Moreover, $(v, F, M)$ satisfies the bound
 \begin{equation*}
    \begin{aligned}
      \sup_{t \in [0,T]} & \Big{(} \| v \|^2_{H^s} +  \| F \|^2_{H^s} + \| \nabla M \|^2_{H^s} \Big{)} \\
      & + \int_0^T \Big{(} \nu \| \nabla v \|^2_{H^s} + \| \Delta M \|^2_{H^s}  \Big{)} \d t \leq B \,.
    \end{aligned}
  \end{equation*}
Then the proof of Theorem \eqref{Thm-1} is finished.

\section{Global well-posedness with small initial data}\label{Sec-Global}

In this section, we will construct a global classical solution to the magneto-elasticity model \eqref{MEL} under the external magnetic field $H_{ext} = 0$ with small initial data. As stated in Section \ref{Sec-Intro}, we will start from \eqref{MEL-reformulate} and prove a global a priori estimate for the unique solution constructed in Theorem \ref{Thm-1} provided that the initial data satisfies \eqref{IC-small-size}.

One notices that the incompressibility \eqref{Incomprsblt-F} reduces to
\begin{equation}
  \begin{aligned}
    \det ( I + G ) = 1 \,,
  \end{aligned}
\end{equation}
which will play an essential role in deriving the global energy estimate of \eqref{MEL-reformulate}. More precisely,
\begin{equation}
  \begin{aligned}
    1 = \det ( I + G ) = 1 + \mathrm{tr} \, G + O (|G|^2) \,,
  \end{aligned}
\end{equation}
namely, $\mathrm{tr}\, G = O (|G|^2)$, and therefore
\begin{equation}\label{Key-Structure}
  \begin{aligned}
    \| \nabla \cdot \psi \|_{H^s} = \| \mathrm{tr}\, G \|_{H^s} \leq C \| \nabla \psi \|^2_{H^s} \,.
  \end{aligned}
\end{equation}

Before deriving the global energy estimate of \eqref{MEL-reformulate}, let us first recall the following lemma from \cite{Temam-1977-BOOK}:
\begin{lemma}\label{Lmm-General-Stokes}
	Let us suppose that
	\begin{equation*}
	  \begin{aligned}
	    v \in W^{1,\alpha} (\mathbb{R}^d)\,, \ q \in L^\alpha (\mathbb{R}^d) \,, \ 2 \leq \alpha < + \infty
	  \end{aligned}
	\end{equation*}
	are solutions of the generalized Stokes problem
	\begin{equation*}
	  \left\{
	    \begin{array}{c}
	      - \Delta v + \nabla q = f \qquad \textrm{in} \quad \mathbb{R}^d \,, \\[2mm]
	      \nabla \cdot v = g \qquad \textrm{in} \quad \mathbb{R}^d \,.
	    \end{array}
	  \right.
	\end{equation*}
	If $f \in W^{m, \alpha} (\mathbb{R}^d)$ and $g \in W^{m+1, \alpha} (\mathbb{R}^d)$, then $v \in W^{m+2, \alpha} (\mathbb{R}^d)$, $q \in W^{m+1, \alpha} (\mathbb{R}^d)$ and there exists a constant $c_0 ( \alpha, m, d ) > 0$ such that
	\begin{equation*}
	  \begin{aligned}
	    \| v \|_{W^{m+2, \alpha} (\mathbb{R}^d)} + \| q \|_{W^{m+1, \alpha} (\mathbb{R}^d)} \leq c_0 \big( \| f \|_{W^{m, \alpha} (\mathbb{R}^d)} + \| g \|_{W^{m+1, \alpha} (\mathbb{R}^d)} \big) \,.
	  \end{aligned}
	\end{equation*}
\end{lemma}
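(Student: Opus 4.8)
This is the classical $L^\alpha$ regularity theory for the stationary Stokes system on $\mathbb{R}^d$, and the plan is to run the usual three-step argument based on Calder\'on--Zygmund estimates --- equivalently, on the Mikhlin--H\"ormander multiplier theorem, which is valid for every $1<\alpha<+\infty$ and in particular throughout the range $2\le\alpha<+\infty$ of the statement. First I would reduce to a divergence-free velocity: set $w_0:=\nabla\Delta^{-1}g$, so that $\nabla\cdot w_0=g$; since each component of $\nabla^2 w_0$ is a composition of Riesz transforms applied to $\nabla g$, i.e. a zeroth-order singular integral of $\nabla g$, one gets $\|\nabla^2 w_0\|_{W^{m+1,\alpha}}\le C\|g\|_{W^{m+1,\alpha}}$. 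Then $u:=v-w_0$ is divergence free and solves $-\Delta u+\nabla q=\widetilde f$ with $\widetilde f:=f+\Delta w_0=f+\nabla g\in W^{m,\alpha}$ and $\|\widetilde f\|_{W^{m,\alpha}}\le C(\|f\|_{W^{m,\alpha}}+\|g\|_{W^{m+1,\alpha}})$.

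Next I would recover the pressure, then the velocity. Taking the divergence of the momentum equation for $u$ and using $\nabla\cdot u=0$ gives $\Delta q=\nabla\cdot\widetilde f$, so that $\nabla q=\nabla\Delta^{-1}\nabla\cdot\widetilde f$ is again a matrix of zeroth-order Calder\'on--Zygmund operators applied to $\widetilde f$ and $\|\nabla q\|_{W^{m,\alpha}}\le C\|\widetilde f\|_{W^{m,\alpha}}$. Inserting this back, $u$ solves the vector Poisson equation $-\Delta u=\widetilde f-\nabla q$, and the second-order estimate for the Laplacian ($\partial_i\partial_j u=\partial_i\partial_j\Delta^{-1}\Delta u$, a zeroth-order operator) yields $\|\nabla^2 u\|_{W^{m,\alpha}}\le C\|\widetilde f-\nabla q\|_{W^{m,\alpha}}\le C(\|f\|_{W^{m,\alpha}}+\|g\|_{W^{m+1,\alpha}})$. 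Adding back $w_0$, this controls every derivative $\partial^\beta v$ with $2\le|\beta|\le m+2$ and every $\partial^\beta q$ with $1\le|\beta|\le m+1$ by the right-hand side; that derivative-level control is the analytic core of the lemma.

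It remains to pass from these gradient-level bounds to the full inhomogeneous norms $\|v\|_{W^{m+2,\alpha}}$ and $\|q\|_{W^{m+1,\alpha}}$, and this is the step I expect to be the main obstacle: on $\mathbb{R}^d$ there is no Poincar\'e inequality and $\Delta^{-1}$ is not bounded on $L^\alpha$, so the zeroth-order pieces of $v$ (and of $q$) are not produced by a direct singular-integral bound. I would handle them through the explicit solution operator together with uniqueness: the pair $(v,q)$ is the only one in the class $v\in W^{1,\alpha}$, $q\in L^\alpha$ (if $f=g=0$ then $q$ is harmonic and in $L^\alpha$, hence $q\equiv0$ by Liouville, whence $v$ is harmonic and in $W^{1,\alpha}$, hence $v\equiv0$), so it coincides with the one obtained by convolution against the Oseen tensor and the Newtonian kernel; Hardy--Littlewood--Sobolev for these kernels, Sobolev embedding, and interpolation with the top-order bound just established then close the estimate with the claimed constant $c_0=c_0(\alpha,m,d)$. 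Everything above zeroth order is routine singular-integral theory, so one may alternatively simply invoke this lemma from \cite{Temam-1977-BOOK} and record only that the whole-space low-frequency bookkeeping is carried out there.
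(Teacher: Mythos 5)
The paper does not prove Lemma~\ref{Lmm-General-Stokes} at all: immediately after the statement it simply points to Proposition~2.2 in Chapter~I, \S 2 of \cite{Temam-1977-BOOK}. Your last sentence ("one may alternatively simply invoke this lemma from Temam") is therefore what the paper actually does, and in that sense your write-up and the paper agree on the endgame.

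That said, you go further and sketch a genuine proof, so let me comment on it. The Calder\'on--Zygmund part is the right route and is carried out correctly: setting $w_0=\nabla\Delta^{-1}g$, subtracting, taking divergence of the momentum equation to get $\Delta q=\nabla\cdot\widetilde f$, and then treating $-\Delta u=\widetilde f-\nabla q$ --- each step reduces to a zeroth-order Fourier multiplier bounded on $L^\alpha$ for $1<\alpha<\infty$, which yields control of $\partial^\beta v$ for $2\le|\beta|\le m+2$ and $\partial^\beta q$ for $1\le|\beta|\le m+1$. This is exactly the content that the paper actually uses: in \eqref{w} the lemma is applied to $\partial_i w$, so only the derivative-level bounds enter the subsequent estimate \eqref{nabla-w}.

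The gap is precisely where you flag it, and your proposed repair does not close. Hardy--Littlewood--Sobolev applied to the Oseen tensor (homogeneous of degree $2-d$) and to the Newtonian kernel sends $L^\alpha$ to $L^\beta$ with $1/\beta=1/\alpha-2/d$ (respectively $-1/d$ for $q$), not back to $L^\alpha$; Sobolev embedding from the top-order $L^\alpha$ bound likewise lands in exponents $>\alpha$. So the chain "HLS $+$ embedding $+$ interpolation" controls $\|v\|_{L^\beta}$ for a range of $\beta>\alpha$, but not $\|v\|_{L^\alpha}$ itself, and there is no Poincar\'e-type mechanism on $\R^d$ to pull the exponent back down. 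The Liouville/uniqueness argument tells you the solution coincides with the potential-theoretic one; it does not by itself give the $L^\alpha$ size bound. If you want an honest proof of the full inhomogeneous estimate you must use the hypothesis $v\in W^{1,\alpha}$, $q\in L^\alpha$ quantitatively (e.g. low/high frequency splitting, with the low-frequency piece controlled through the assumed membership rather than through $f,g$), or retreat to the homogeneous estimate $\|\nabla^2 v\|_{W^{m,\alpha}}+\|\nabla q\|_{W^{m,\alpha}}\le c_0(\|f\|_{W^{m,\alpha}}+\|\nabla g\|_{W^{m,\alpha}})$, which is what your CZ argument actually delivers and what the paper needs. Citing \cite{Temam-1977-BOOK}, as the paper does and as you suggest in closing, is the pragmatic way to avoid rehearsing this.
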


There is a more general version of Lemma \ref{Lmm-General-Stokes}; see Proposition 2.2 in Chapter I, $\S$ 2 of \cite{Temam-1977-BOOK} for details.

We introduce the energy functional
\begin{equation}\label{Glob-Ener-E}
  \begin{aligned}
    \mathbb{E}_s (t) = \delta^2 \| v \|^2_{H^s} + \| \nabla M \|^2_{H^s} + \delta \| \nabla \psi \|^2_{H^s} + \| \partial_t v \|^2_{H^{s-2}} + \| \nabla \partial_t \psi \|^2_{H^{s-2}}
  \end{aligned}
\end{equation}
and the energy dissipative rate functional
\begin{equation}\label{Glob-Ener-D}
  \begin{aligned}
    \mathbb{D}_s (t) = \tfrac{1}{2} \delta^2 \nu \| \nabla v \|^2_{H^s} + \delta^2 \nu \| \nabla \partial_t \psi \|^2_{H^{s-2}} + 2 \| \Delta M \|^2_{H^s} + \tfrac{\delta}{2 \nu} \| \nabla \psi \|^2_{H^s} + \nu \| \nabla \partial_t v \|^2_{H^{s-2}}
  \end{aligned}
\end{equation}
for some small $\delta > 0$. Actually, the small constant $\delta > 0$ can be explicitly determined in \eqref{Small-delta}. Then we give the following proposition.

\begin{proposition}\label{Prop-Global-Est}
	Let $s \geq 3$ be any integer. Assume that $(v,F,M)$ is the solution on $[ 0, T_{max} )$ to \eqref{MEL}-\eqref{IC-MEL} constructed in Theorem \ref{Thm-1} under $H_{ext} = 0$, where $T_{max}$ is the lifespan of solutions constructed in Theorem \ref{Thm-1}. Let $ (v, G, M) $ be such that $G = F^{-1} - I = \nabla \psi$. Then there is a constant $c_1 (s, d) > 0$ such that
	\begin{equation}\label{Global-Est}
	  \begin{aligned}
	    \tfrac{1}{2} \tfrac{\d}{\d t} \mathbb{E}_s (t) + \mathbb{D}_s (t) \leq c_1 \big(\mathbb{E}_s^\frac{1}{2} (t) + \mathbb{E}_s^\frac{3}{2} ( t ) \big) \mathbb{D}_s (t)
	  \end{aligned}
	\end{equation}
	holds for all $t \in [0, T_{max})$.
\end{proposition}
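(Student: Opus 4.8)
The plan is to run a weighted $H^s$ energy method on the reformulated system \eqref{MEL-reformulate}, treating the pair $(v,M)$ exactly as in the \emph{a priori} estimate of Section~\ref{A_Priori} and manufacturing the dissipation of $G=\nabla\psi$ (which is absent from the transport structure) out of the $v$--$\psi$ coupling together with the stationary Stokes bound of Lemma~\ref{Lmm-General-Stokes}. Concretely I will control the five pieces of $\mathbb{E}_s(t)$ one group at a time, show each dissipates at the rate recorded in $\mathbb{D}_s(t)$, and check that everything left over is cubic or higher. For each $|m|\le s$ I apply $\partial^m$ to the $v$- and $M$-equations, test against $\partial^m v$ and $-\Delta\partial^m M$ respectively, and add. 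The transport terms and the $v$--$M$ cross terms $\langle\nabla\!\cdot(\nabla M\odot\nabla M),v\rangle$, $\langle v\cdot\nabla M,\Delta M\rangle$ are handled by the cancellation \eqref{Cancellation-M-H}, leaving only cubic commutators; the gyro-term contributes only commutators because $M\cdot(M\times\Delta M)=0$; the Lagrange term $|\nabla M|^2M$ and the elastic remainder $\nabla\!\cdot g(G)$ with $g(G)=O(|G|^2)$ are at least quadratic. The only genuinely new contribution is the coupling $\langle\Delta\partial^m\psi,\partial^m v\rangle=-\langle\nabla\partial^m\psi,\nabla\partial^m v\rangle$; after weighting the $v$-estimate by $\delta^2$, Young's inequality bounds it by $\tfrac14\delta^2\nu\|\nabla v\|_{H^s}^2+C\delta^2\nu^{-1}\|\nabla\psi\|_{H^s}^2$, which for $\delta$ small is absorbed into the dissipation terms $\tfrac12\delta^2\nu\|\nabla v\|_{H^s}^2$ and $\tfrac{\delta}{2\nu}\|\nabla\psi\|_{H^s}^2$ of $\mathbb{D}_s$.

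Next I extract the $\psi$-dissipation. Writing $v=\tfrac1\nu(\psi+w)$ with $w:=\nu v-\psi$, the transport equation for $\psi$ becomes $\partial_t\psi+\tfrac1\nu\psi=-\tfrac1\nu w-v\cdot\nabla\psi$, which now carries a damping term. Testing $\nabla\partial^m$ of this against $\nabla\partial^m\psi$, summing over $|m|\le s$ and weighting by $\delta$ produces $\tfrac{\delta}{\nu}\|\nabla\psi\|_{H^s}^2$; after a Young split, $\tfrac{\delta}{2\nu}\|\nabla\psi\|_{H^s}^2$ survives as dissipation and the remaining half must absorb $\tfrac{\delta}{2\nu}\|\nabla w\|_{H^s}^2$ together with the cubic transport commutators. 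The point is that $w$ solves the stationary Stokes problem $-\Delta w+\nabla q=-\partial_t v-v\cdot\nabla v+\nabla\!\cdot g(G)-\nabla\!\cdot(\nabla M\odot\nabla M)$, $\nabla\!\cdot w=-\nabla\!\cdot\psi$, so Lemma~\ref{Lmm-General-Stokes} bounds $\|w\|$ by $\|\partial_t v\|$ plus the quadratic nonlinearities plus $\|\nabla\!\cdot\psi\|$, and the key structure \eqref{Key-Structure} makes the divergence datum $\nabla\!\cdot\psi=\mathrm{tr}\,G=O(|G|^2)$ quadratic in $\nabla\psi$. Hence every part of $\|w\|$ is higher order except the one governed by $\partial_t v$, which is precisely why $\|\partial_t v\|_{H^{s-2}}$ and, through $\mathbb{D}_s$, $\|\nabla\partial_t v\|_{H^{s-2}}$ have to be carried along in the energy.

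Then I close the system for the two time-derivative norms. Differentiating the $v$- and $\psi$-equations in $t$ and running $H^{s-2}$ estimates on $(\partial_t v,\partial_t\psi)$, the coupling $\langle\nabla\partial_t v,\nabla\partial_t\psi\rangle_{H^{s-2}}$ cancels between the two (just as the $v$--$\psi$ coupling does at the $H^s$ level), leaving the dissipation $\nu\|\nabla\partial_t v\|_{H^{s-2}}^2$; the bound $\delta^2\nu\|\nabla\partial_t\psi\|_{H^{s-2}}^2$ is then borrowed from $\|\nabla v\|_{H^s}^2$ by reading $\partial_t\psi=-v+O(|v|\,|\nabla\psi|)$ off the $\psi$-equation. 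Adding the three groups of estimates and fixing $\delta$ small as in \eqref{Small-delta} so that every coupling and error term is reabsorbed into $\mathbb{D}_s$, it remains to observe that every surviving nonlinearity carries (at least) three factors, of which, because $s\ge3$ gives one spare derivative in $d=2,3$ for the $L^\infty$ factor via $H^{s-1}\hookrightarrow L^\infty$, two can always be routed to one of the dissipated norms $\nabla v$, $\Delta M$, $\nabla\psi$, $\nabla\partial_t v$; such a term is $\lesssim\mathbb{E}_s^{1/2}\mathbb{D}_s$ (or $\mathbb{E}_s^{3/2}\mathbb{D}_s$ when the quadratic $g(G)$, $\mathrm{tr}\,G$ enter), which yields \eqref{Global-Est}.

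The step I expect to be the genuine obstacle is the bookkeeping in the second group: producing the $\psi$-dissipation forces the Stokes bound for $w$ at one derivative above the regularity of $v$, which in turn injects $\partial_t v$ at a norm one above the one stored in $\mathbb{E}_s$, so one must carefully balance the $\delta$-weights — absorbing the resulting $\|\nabla v\|_{H^s}$, $\|\nabla\psi\|_{H^s}$ and $\|\nabla\partial_t v\|_{H^{s-2}}$ contributions back into $\mathbb{D}_s$ and verifying that no remainder merely linear in $\mathbb{E}_s$ is left behind. This is exactly the reason for the asymmetric, $\delta$-graded form of $\mathbb{E}_s$ and $\mathbb{D}_s$ in \eqref{Glob-Ener-E}--\eqref{Glob-Ener-D} and for including the two time-derivative norms, and getting the powers of $\delta$ mutually consistent is where the real work of the proof lies.
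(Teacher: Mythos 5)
Your proposal follows essentially the same route as the paper's proof: you run the weighted $H^s$ energy method on the reformulated $(v,\psi,M)$ system, treat $(v,M)$ via the cancellation structure from Section~\ref{A_Priori}, extract the $\psi$-damping by rewriting $v=\tfrac{1}{\nu}(\psi+w)$, bound $w$ through the stationary Stokes lemma combined with the quadratic divergence datum from \eqref{Key-Structure}, carry the two time-derivative norms $\|\partial_tv\|_{H^{s-2}}$ and $\|\nabla\partial_t\psi\|_{H^{s-2}}$ so the $\partial_tv$ term injected by the Stokes estimate can be dissipated, and close by choosing the $\delta$-weights as in \eqref{Small-delta}. The only slip is expository: you say the $v$--$\psi$ coupling at the $H^{s-2}$-$\partial_t$ level cancels ``just as the $v$--$\psi$ coupling does at the $H^s$ level,'' but at the $H^s$ level the coupling $-\langle\nabla\partial^m\psi,\nabla\partial^m v\rangle$ does not cancel — it is absorbed by Young's inequality, as you in fact describe two sentences earlier — whereas at the $\partial_t$ level the terms $\pm\langle\Delta\partial^m\partial_t\psi,\partial^m\partial_t v\rangle$ from the two equations cancel exactly; this does not affect the validity of the argument.
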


\begin{proof}[Proof of Proposition \ref{Prop-Global-Est}]
	According to the structure of the energy functional $\mathbb{E}_s (t)$ defined in \eqref{Glob-Ener-E}, we will first estimate the norms $\| v \|^2_{H^s}$, $\| \nabla \psi \|^2_{H^s}$, $ \| \nabla M \|^2_{H^s} $, $\| \partial_t v \|^2_{H^{s-2}}$ and $\| \partial_t \psi \|^2_{H^{s-2}}$ respectively. Finally, we deduce the close energy estimate \eqref{Global-Est} from the bounds of previous norms after adjusting the proper coefficients.
	
	{\em Step 1. Estimate the norm $\| v \|^2_{H^s}$.} For all $|m| \leq s$, we act the derivative operator $\partial^m$ on the first $v$-equation of \eqref{MEL-reformulate}, take inner product with $\partial^m v$ and integrate by parts over $x \in \mathbb{R}^d$. Then we have
	\begin{equation}\label{I1-I2-I3}
	  \begin{aligned}
	    & \tfrac{1}{2} \tfrac{\d}{\d t} \| \partial^m v \|^2_{L^2} + \nu \| \nabla \partial^m v \|^2_{L^2} - \big\langle \nabla \partial^m \psi , \nabla \partial^m v \big\rangle \\
	    = & \underset{I_1}{\underbrace{ - \big\langle \partial^m (v \cdot \nabla v), \partial^m v \big\rangle }} + \underset{I_2}{\underbrace{ \big\langle \partial^m g (G) , \nabla \partial^m v \big\rangle }} \ \underset{I_3}{\underbrace{ - \big\langle \partial^m \nabla \cdot ( \nabla M \odot \nabla M ), \partial^m v \big\rangle }} \,.
	  \end{aligned}
	\end{equation}
	For the term $I_1$, we estimate that
	\begin{equation}\label{I1}
	  \begin{aligned}
	    I_1 = & - \sum_{0 \neq m' \leq m} C_m^{m'} \big\langle \partial^{m'} v \cdot \nabla \partial^{m-m'} v , \partial^m v \big\rangle \\
	    \leq & C \sum_{0 \neq m' \leq m} \| \partial^{m'} v \|_{L^4} \| \nabla \partial^{m-m'} v \|_{L^4} \| \partial^m v \|_{L^2} \\
	    \leq & C \sum_{0 \neq m' \leq m} \| \partial^{m'} v \|_{H^1} \| \nabla \partial^{m-m'} v \|_{H^1} \| \partial^m v \|_{L^2} \\
	    \leq & C \| v \|_{H^s} \| \nabla v \|^2_{H^s} \leq C (\delta) \mathbb{E}^\frac{1}{2}_s (t) \mathbb{D}_s (t) \,,
	  \end{aligned}
	\end{equation}
	where we utilize the divergence-free property of $v$ and the Sobolev embedding $H^1 (\mathbb{R}^d) \hookrightarrow L^4 (\mathbb{R}^d)$ for $d=2,3$. Since $g (G) = O (|G|^2)$ and $G = \nabla \psi$, we have
	\begin{equation}\label{I2}
	  \begin{aligned}
	    I_2 \leq \| \partial^m g (G) \|_{L^2} \| \nabla \partial^m v \|_{L^2} \leq C \| \nabla \psi \|^2_{H^s} \| \nabla v \|_{H^s} \leq C(\delta) \mathbb{E}^\frac{1}{2}_s (t) \mathbb{D}_s (t) \,.
	  \end{aligned}
	\end{equation}
	Furthermore, from the Sobolev interpolation inequality $ \| f \|_{L^4(\mathbb{R}^d)} \leq C \| f \|_{L^2 (\mathbb{R}^d)}^{1 - \tfrac{d}{4}} \| \nabla f \|^\frac{d}{4}_{L^2 (\mathbb{R}^d)} $ for $d=2,3$, one easily derives that
	\begin{equation}\label{I3}
	  \begin{aligned}
	    I_3 = & \sum_{m' \leq m} C_m^{m'} \big\langle \nabla \partial^{m'} M \odot \nabla \partial^{m-m'} M , \nabla \partial^m v \big\rangle \\
	    \leq & C \sum_{m' \leq m} \| \nabla \partial^{m'} M \|_{L^4} \| \nabla \partial^{m-m'} M \|_{L^4} \| \nabla \partial^m v \|_{L^2} \\
	    \leq & C \sum_{m' \leq m} \| \nabla \partial^{m'} M \|_{L^2}^{1 - \frac{d}{4}} \| \Delta \partial^{m'} M \|_{L^2}^\frac{d}{4} \| \nabla \partial^{m- m'} M \|_{L^2}^{1 - \frac{d}{4}} \| \Delta \partial^{m-  m'} M \|_{L^2}^\frac{d}{4} \| \nabla \partial^m v \|_{L^2} \\
	    \leq & C \| \nabla M \|_{H^s} \| \Delta M \|_{H^s} \| \nabla v \|_{H^s} \leq C(\delta) \mathbb{E}^\frac{1}{2}_s (t) \mathbb{D}_s (t) \,.
	  \end{aligned}
	\end{equation}
	Then, substituting the inequalities \eqref{I1}, \eqref{I2} and \eqref{I3} into the equality \eqref{I1-I2-I3}, summing up for $|m| \leq s$ and the bound $\sum_{|m| \leq s} \big\langle \nabla \partial^m \psi , \nabla \partial^m v \big\rangle \leq \| \nabla \psi \|_{H^s} \| \nabla v \|_{H^s} $ give us
	\begin{equation}\label{norm-v-Hs}
	  \begin{aligned}
	    \tfrac{1}{2} \tfrac{\d}{\d t} \| v \|^2_{H^s} + \nu \| \nabla v \|^2_{H^s} - \| \nabla \psi \|_{H^s} \| \nabla v \|_{H^s} \leq C(\delta) \mathbb{E}^\frac{1}{2}_s (t) \mathbb{D}_s (t) \,.
	  \end{aligned}
	\end{equation}
	
	{\em Step 2. Estimate the norm $\| \nabla \psi \|^2_{H^s}$.} Via acting the derivative operator $\nabla \partial^m$ on the third $\psi$-equation of \eqref{MEL-reformulate} for all $|m| \leq s$ and taking inner product by dot with $\nabla \partial^m \psi$, we can obtain
	\begin{equation}\label{II1-II2}
	  \begin{aligned}
	    \tfrac{1}{2} \tfrac{\d}{\d t} \| \nabla \partial^m \psi \|^2_{L^2} + \tfrac{1}{\nu} \| \nabla \partial^m \psi \|^2_{L^2} = \underset{I\!I_1}{\underbrace{ - \tfrac{1}{\nu} \big\langle \nabla \partial^m w , \nabla \partial^m \psi \big\rangle }} \  \underset{I\!I_2}{\underbrace{ - \big\langle \nabla \partial^m ( v \cdot \nabla \psi ) , \nabla \partial^m \psi \big\rangle }} \,,
	  \end{aligned}
	\end{equation}
	where $w = \nu v - \psi $. From the first $v$-equation of \eqref{MEL-reformulate}, we deduce that $\partial_i w$ $(i = 1,2, \cdots , d)$ obeys the following generalized Stokes system
	\begin{equation}\no
	  \left\{
	    \begin{array}{l}
	      - \Delta \partial_i w + \nabla \partial_i q = - \partial_i \partial_t v - \partial_i (v \cdot \nabla v) + \partial_i \nabla \cdot \big( g (G) - \nabla M \odot \nabla M \big) \,, \\
	      \nabla \cdot \partial_i w = - \partial_i \nabla \cdot \psi \,,
	    \end{array}
	  \right.
	\end{equation}
	where the divergence-free property of $v$ is utilized. Then Lemma \ref{Lmm-General-Stokes} implies that for all $i = 1,2, \cdots, d$
	\begin{equation}\label{w}
	  \begin{aligned}
	    \| \partial_i w \|_{H^s} + \| \partial_i q \|_{H^{s-1}} \leq c_0 \big( \| \partial_i \nabla \cdot \psi \|_{H^{s-1}} + \| \partial_i \partial_t v \|_{H^{s-2}} \\
	    + \| \partial_i (v \cdot \nabla v) \|_{H^{s-2}} + \| \partial_i \nabla \cdot \big( g(G) - \nabla M \odot \nabla M \big) \|_{H^{s-2}} \big) \,.
	  \end{aligned}
	\end{equation}
	By the key relation \eqref{Key-Structure}, we imply that
	\begin{equation}\label{w1}
	  \begin{aligned}
	    \| \partial_i \nabla \cdot \psi \|_{H^{s-1}} = \| \partial_i \mathrm{tr}\, G \|_{H^{s-1}} \leq C \| \nabla \psi \|^2_{H^s}
	  \end{aligned}
	\end{equation}
	For the quantity $ \| \partial_i (v \cdot \nabla v) \|_{H^{s-2}} $, one can deduce from the Sobolev embedding $H^1 (\mathbb{R}^d) \hookrightarrow L^4 (\mathbb{R}^d)$ for $d=2,3$ that
	\begin{equation}\label{w2}
	  \begin{aligned}
	    & \| \partial_i (v \cdot \nabla v) \|^2_{H^{s-2}} = \sum_{|m| \leq s-2} \| \partial^m \partial_i (v \cdot \nabla v) \|^2_{L^2} \\
	    \leq & \sum_{|m| \leq s-1} \sum_{m' \leq m} C_m^{m'} \| \partial^{m'} v \cdot \nabla \partial^{m-m'} v \|^2_{L^2} \\
	    \leq & \sum_{|m| \leq s-1} \sum_{m' \leq m} C_m^{m'} \| \partial^{m'} v \|^2_{L^4} \| \nabla \partial^{m-m'} v \|^2_{L^4} \\
	    \leq & C \| v \|^2_{H^s} \| \nabla v \|^2_{H^s} \,.
	  \end{aligned}
	\end{equation}
	Similarly, we can also derive that
	\begin{equation}\label{w3}
	  \begin{aligned}
	    & \| \partial_i \nabla \cdot ( \nabla M \odot \nabla M ) \|^2_{H^{s-2}} = \| \partial_i ( \nabla^2 M \odot \nabla M + \nabla M \odot \Delta M ) \|^2_{H^{s-2}} \\
	    \leq & C \sum_{|m| \leq s - 1} \sum_{m' \leq m} \| \Delta \partial^{m'} M \|^2_{L^4} \| \nabla \partial^{m-m'} M \|^2_{L^4} \\
	    \leq & C \sum_{|m| \leq s - 1} \sum_{m' \leq m} \| \Delta \partial^{m'} M \|^2_{H^1} \| \nabla \partial^{m-m'} M \|^2_{H^1} \\
	    \leq & C \| \Delta M \|^2_{H^s} \| \nabla M \|^2_{H^s} \,.
	  \end{aligned}
	\end{equation}
	Recalling that $g (G) = O(|G|^2)$ and $G = \nabla \psi$, we have $\| g (G) \|_{H^s} \leq C  \| \nabla \psi \|^2_{H^s}$, which immediately implies that
	\begin{equation}\label{w4}
	  \begin{aligned}
	    \| \partial_i \nabla \cdot g (G) \|_{H^{s-2}} \leq \| g (G) \|_{H^s} \leq C  \| \nabla \psi \|^2_{H^s} \,.
	  \end{aligned}
	\end{equation}
	Consequently, by plugging the bounds \eqref{w1}, \eqref{w2}, \eqref{w3} and \eqref{w4} into the relation \eqref{w}, we easily obtain
	\begin{equation}\label{nabla-w}
	  \begin{aligned}
	    \| \nabla w \|_{H^s} + \| \nabla q \|_{H^{s-1}} \leq & 4 c_0 \| \nabla \partial_t v \|_{H^{s-2}} + C ( \| v \|_{H^s} + \| \nabla \psi \|_{H^s} + \| \nabla M \|_{H^s} ) \\
	    & \qquad \qquad \qquad \quad \times ( \| \nabla v \|_{H^s} + \| \nabla \psi \|_{H^s} + \| \Delta M \|_{H^s} ) \,.
	  \end{aligned}
	\end{equation}
	Thus, by the previous bound \eqref{nabla-w}, the H\"older inequality and the definitions of $\mathbb{E}_s (t)$, $\mathbb{D}_s (t)$ in \eqref{Glob-Ener-E}, \eqref{Glob-Ener-D} respectively, we imply that
	\begin{equation}\label{II1}
	  \begin{aligned}
	    I\!I_1 \leq & \tfrac{1}{\nu} \| \nabla w \|_{H^s} \| \nabla \psi \|_{H^s} \\
	    \leq & \tfrac{4 c_0}{\nu} \| \nabla \partial_t v \|_{H^{s-2}} \| \nabla \psi \|_{H^s} + \tfrac{C}{\nu} ( \| v \|_{H^s} + \| \nabla \psi \|_{H^s} + \| \nabla M \|_{H^s} ) \\
	    & \qquad \qquad \qquad \times ( \| \nabla v \|_{H^s} + \| \nabla \psi \|_{H^s} + \| \Delta M \|_{H^s} ) \| \nabla \psi \|_{H^s} \\
	    \leq & \tfrac{4 c_0}{\nu} \| \nabla \partial_t v \|_{H^{s-2}} \| \nabla \psi \|_{H^s} + C (\delta) \mathbb{E}_s^\frac{1}{2} (t) \mathbb{D}_s (t) \,.
	  \end{aligned}
	\end{equation}
	
	Noticing that $\nabla \cdot v = 0$, we deduce that
	\begin{equation}\label{II2-Equ}
	  \begin{aligned}
	    I\!I_2 = & - \big\langle \partial^m ( \nabla v \cdot \nabla \psi ) + \partial^m ( v \cdot \nabla \nabla \psi ) , \nabla \partial^m \psi \big\rangle \\
	    = & \underset{I\!I_{21}}{\underbrace{ - \big\langle \nabla \partial^m v \cdot \nabla \psi , \nabla \partial^m \psi \big\rangle }} \  \underset{I\!I_{22}}{\underbrace{ - \big\langle \nabla v \cdot \nabla \partial^m \psi , \nabla \partial^m \psi \big\rangle }} \\
	    & \underset{I\!I_{23}}{\underbrace{ - \sum_{0 \neq m' < m} C_m^{m'} \big\langle \nabla \partial^{m-m'} v \cdot \nabla \partial^{m'} \psi , \nabla \partial^m \psi \big\rangle }} \\
	    & \underset{I\!I_{24}}{\underbrace{ - \sum_{|m'| = 1} C_m^{m'} \big\langle \partial^{m'} v \cdot \nabla \partial^{m-m'} \nabla \psi , \nabla \partial^m \psi \big\rangle }} \\
	    & \underset{I\!I_{25}}{\underbrace{ - \sum_{m' \leq m, |m'| \geq 2} C_m^{m'} \big\langle \partial^{m'} v \cdot \nabla \partial^{m-m'} \nabla \psi , \nabla \partial^m \psi \big\rangle }} \,.
	  \end{aligned}
	\end{equation}
	By the Sobolev embedding $H^2 (\mathbb{R}^d) \hookrightarrow L^\infty (\mathbb{R}^d)$ for $d = 2,3$, we estimate that
	\begin{equation}\label{II21}
	  \begin{aligned}
	    I\!I_{21} \leq & \| \nabla \partial^m v \|_{L^2} \| \nabla \psi \|_{L^\infty} \| \nabla \partial^m \psi \|_{L^2} \\
	    \leq & C \| \nabla \partial^m v \|_{L^2} \| \nabla \psi \|_{H^2} \| \nabla \partial^m \psi \|_{L^2} \\
	    \leq & C \| \nabla v \|_{H^s} \| \nabla \psi \|^2_{H^s} \,,
	  \end{aligned}
	\end{equation}
	and
	\begin{equation}\label{II22}
	  \begin{aligned}
	    I\!I_{22} \leq & \| \nabla v \|_{L^\infty} \| \nabla \partial^m \psi \|^2_{L^2} \leq C \| \nabla v \|_{H^2} \| \nabla \partial^m \psi \|^2_{L^2} \leq C \| \nabla v \|_{H^s} \| \nabla \psi \|^2_{H^s} \,,
	  \end{aligned}
	\end{equation}
	and
	\begin{equation}\label{II24}
	  \begin{aligned}
	    I\!I_{24} \leq & \sum_{|m'| = 1} C_m^{m'} \| \partial^{m'} v \|_{L^\infty} \| \nabla \partial^{m-m'} \nabla \psi \|_{L^2} \| \nabla \partial^m \psi \|_{L^2} \\
	    \leq & C \sum_{|m'| = 1} \| \partial^{m'} v \|_{H^2} \| \nabla \partial^{m-m'} \nabla \psi \|_{L^2} \| \nabla \partial^m \psi \|_{L^2} \\
	    \leq & C \| \nabla v \|_{H^s} \| \nabla \psi \|^2_{H^s} \,.
	  \end{aligned}
	\end{equation}
	Furthermore, from the Sobolev embedding $H^1 (\mathbb{R}^d) \hookrightarrow L^4 (\mathbb{R}^d)$ for $d = 2,3$, we derive that
	\begin{equation}\label{II23}
	  \begin{aligned}
	    I\!I_{23} \leq &  \sum_{0 \neq m' < m } C_m^{m'} \| \nabla \partial^{m-m'} v \|_{L^4} \| \nabla \partial^{m'} \psi \|_{L^4} \| \nabla \partial^m \psi \|_{L^2} \\
	    \leq & C \sum_{0 \neq m' < m } \| \nabla \partial^{m-m'} v \|_{H^1} \| \nabla \partial^{m'} \psi \|_{H^1} \| \nabla \partial^m \psi \|_{L^2} \\
	    \leq & C \| \nabla v \|_{H^s} \| \nabla \psi \|^2_{H^s} \,,
	  \end{aligned}
	\end{equation}
	and
	\begin{equation}\label{II25}
	  \begin{aligned}
	    I\!I_{25} \leq & C \sum_{m' \leq m , |m'| \geq 2} \| \partial^{m'} v \|_{L^4} \| \nabla \partial^{m-m'} \nabla \psi \|_{L^4} \| \nabla \partial^m \psi \|_{L^2} \\
	    \leq & C \sum_{m' \leq m , |m'| \geq 2} \| \partial^{m'} v \|_{H^1} \| \nabla \partial^{m-m'} \nabla \psi \|_{H^1} \| \nabla \partial^m \psi \|_{L^2} \\
	    \leq & C \| \nabla v \|_{H^s} \| \nabla \psi \|^2_{H^s} \,.
	  \end{aligned}
	\end{equation}
	As a result, it is derived from plugging the bounds \eqref{II21}, \eqref{II22}, \eqref{II24}, \eqref{II23} and \eqref{II25} into the equality \eqref{II2-Equ} that
	\begin{equation}\label{II2}
	  \begin{aligned}
	    I\!I_2 \leq C \| \nabla v \|_{H^s} \| \nabla \psi \|^2_{H^s} \leq C(\delta) \mathbb{E}_s^\frac{1}{2} (t) \mathbb{D}_s (t) \,.
	  \end{aligned}
	\end{equation}
	We now substitute the bounds \eqref{II1} and \eqref{II2} into the equality \eqref{II1-II2} and sum up for $|m| \leq s$. Then we obtain
	\begin{equation}\label{norm-nabla-psi-Hs}
	  \begin{aligned}
	    \tfrac{1}{2} \tfrac{\d}{\d t} \| \nabla \psi \|^2_{H^s} + \tfrac{1}{\nu} \| \nabla \psi \|^2_{H^s} - \tfrac{4 c_0 K_s}{\nu} \| \nabla \partial_t v \|_{H^{s-2}} \| \nabla \psi \|_{H^s} \leq C(\delta) \mathbb{E}_s^\frac{1}{2} (t) \mathbb{D}_s (t) \,,
	  \end{aligned}
	\end{equation}
	where $K_s > 0$ denotes the number of all possible $m \in \mathbb{N}^d$ such that $|m| \leq s$.
	
	{\em Step 3. Estimate the norm $\| \nabla M \|^2_{H^s}$.} For all multi-indexes $ m \in \mathbb{N}^d$ with $|m| \leq s$, we act the derivative operator $\partial^m$ on the last second $M$-equation of \eqref{MEL-reformulate}, take inner product by dot with $\Delta \partial^m M$ and integrate by parts over $x \in \mathbb{R}^d$. Then we have
	\begin{equation}\label{III1-III2-III3}
	  \begin{aligned}
	    & \tfrac{1}{2} \tfrac{\d}{\d t} \| \nabla \partial^m M \|^2_{L^2} + \| \Delta \partial^m M \|^2_{L^2} \\
	    = & \underset{I\!I\!I_1}{\underbrace{ \big\langle \partial^m ( v \cdot \nabla M ) , \Delta \partial^m M \big\rangle }} \ \underset{I\!I\!I_2}{\underbrace{ - \big\langle \partial^m ( |\nabla M|^2 M ) , \Delta \partial^m M \big\rangle }} + \underset{I\!I\!I_3}{\underbrace{ \big\langle \partial^m ( M \times \Delta M ) , \Delta \partial^m M \big\rangle }} \,.
	  \end{aligned}
	\end{equation}
	We decompose the term $I\!I\!I_1$ as
	\begin{equation}\no
	  \begin{aligned}
	    I\!I\!I_1 = \underset{I\!I\!I_{11}}{\underbrace{ \big\langle v \cdot \nabla \partial^m M , \Delta \partial^m M \big\rangle }} + \underset{I\!I\!I_{12}}{\underbrace{ \sum_{0 \neq m' \leq m} C_m^{m'} \big\langle \partial^{m'} v \cdot \nabla \partial^{m-m'} M , \Delta \partial^m M \big\rangle }} \,.
	  \end{aligned}
	\end{equation}
	For the term $I\!I\!I_{11}$, from the Sobolev interpolation $ \| f \|_{L^4 (\mathbb{R}^d)} \leq C \| f \|_{L^2 (\mathbb{R}^d)}^{1 - \tfrac{d}{4}} \| \nabla f \|_{L^2 (\mathbb{R}^d)}^\frac{d}{4} $ with $d=2,3$, we deduce that
	\begin{equation}\label{III11}
	  \begin{aligned}
	    I\!I\!I_{11} \leq & \| v \|_{L^4} \| \nabla \partial^m M \|_{L^4} \| \Delta \partial^m M \|_{L^2} \\
	    \leq & C \| v \|^{1- \frac{d}{4}}_{L^2} \| \nabla v \|_{L^2}^\frac{d}{4} \| \nabla \partial^m M \|^{1- \frac{d}{4}}_{L^2} \| \Delta \partial^m M \|_{L^2}^{1 + \frac{d}{4}} \\
	    = & C \| v \|^{1- \frac{d}{4}}_{L^2} \| \nabla v \|_{L^2}^{\frac{d}{2} - 1} \| \nabla \partial^m M \|^{1- \frac{d}{4}}_{L^2} \| \nabla v \|_{L^2}^{1 - \frac{d}{4}} \| \Delta \partial^m M \|_{L^2}^{1 + \frac{d}{4}} \\
	    \leq & C ( \| v \|_{H^s} + \| \nabla M \|_{H^s} ) ( \| \nabla v \|^2_{H^s} + \| \Delta M \|^2_{H^s} ) \\
	    \leq & C(\delta) \mathbb{E}_s^\frac{1}{2} (t) \mathbb{D}_s (t) \,.
	  \end{aligned}
	\end{equation}
	Furthermore, by the Sobolev embedding $H^1 (\mathbb{R}^d) \hookrightarrow L^4 (\mathbb{R}^d)$ with $d=2,3$, we have
	\begin{equation}\label{III12}
	  \begin{aligned}
	    I\!I\!I_{12} \leq & \sum_{0 \neq m' \leq m} C_m^{m'} \| \partial^{m'} v \|_{L^4} \| \nabla \partial^{m-m'} M \|_{L^4} \| \Delta \partial^m M \|_{L^2} \\
	    \leq & C \sum_{0 \neq m' \leq m} \| \partial^{m'} v \|_{H^1} \| \nabla \partial^{m-m'} M \|_{H^1} \| \Delta \partial^m M \|_{L^2} \\
	    \leq & C \| \nabla v \|_{H^s} \| \nabla M \|_{H^s} \| \Delta M \|_{H^s} \leq C(\delta) \mathbb{E}_s^\frac{1}{2} (t) \mathbb{D}_s (t) \,.
	  \end{aligned}
	\end{equation}
	We thereby know that by combining \eqref{III11} with \eqref{III12}
	\begin{equation}\label{III1}
	  \begin{aligned}
	    I\!I\!I_1 = I\!I\!I_{11} + I\!I\!I_{12} \leq C(\delta) \mathbb{E}_s^\frac{1}{2} (t) \mathbb{D}_s (t) \,.
	  \end{aligned}
	\end{equation}
	We expand the term $I\!I\!I_2$ as
	\begin{equation}\no
	  \begin{aligned}
	    I\!I\!I_2 = & \underset{I\!I\!I_{21}}{\underbrace{  - \sum_{m' \leq m} C_m^{m'} \big\langle ( \nabla \partial^{m'} M \cdot \nabla \partial^{m-m'} M ) M , \Delta \partial^m M \big\rangle }} \\
	    & \underset{I\!I\!I_{22}}{\underbrace{  - \sum_{0 \neq m' \leq m} \sum_{m'' \leq m-m'} C_m^{m'} C_{m-m'}^{m''} \big\langle ( \nabla \partial^{m-m'-m''} M \cdot \nabla \partial^{m''} M ) \partial^{m'} M , \Delta \partial^m M \big\rangle }} \,.
	  \end{aligned}
	\end{equation}
	Since $|M| = 1$, we estimate
	\begin{equation}\label{III21}
	  \begin{aligned}
	    & I\!I\!I_{21} \leq C \sum_{m' \leq m} \| \nabla \partial^{m'} M \|_{L^4} \| \nabla \partial^{m-m'} M \|_{L^4} \| \Delta \partial^m M \|_{L^2} \\
	    \leq & C \sum_{m' \leq m} \| \nabla \partial^{m'} M \|^{1-\frac{d}{4}}_{L^2} \| \Delta \partial^{m'} M \|^\frac{d}{4}_{L^2} \| \nabla \partial^{m-m'} M \|^{1-\frac{d}{4}}_{L^2} \| \Delta \partial^{m-m'} M \|^\frac{d}{4}_{L^2} \| \Delta \partial^m M \|_{L^2} \\
	    \leq & C \| \nabla M \|_{H^s} \| \Delta M \|^2_{H^s} \leq C(\delta) \mathbb{E}_s^\frac{1}{2} (t) \mathbb{D}_s (t) \,,
	  \end{aligned}
	\end{equation}
	where we make use of the Sobolev interpolation $ \| f \|_{L^4 (\mathbb{R}^d)} \leq C \| f \|_{L^2 (\mathbb{R}^d)}^{1 - \tfrac{d}{4}} \| \nabla f \|_{L^2 (\mathbb{R}^d)}^\frac{d}{4} $ with $d=2,3$. By the previous Sobolev interpolation inequality and the Sobolev embedding $H^2 (\mathbb{R}^d) \hookrightarrow L^\infty (\mathbb{R}^d)$ with $d = 2,3$, we yield that
	\begin{equation}\label{III22}
	  \begin{aligned}
	    I\!I\!I_{22} \leq & C \sum_{\substack{m' \leq m , |m'| = 1 \\ m'' \leq m -m'}} \| \nabla \partial^{m-m'-m''} M \|_{L^4} \| \nabla \partial^{m''} M \|_{L^4} \| \partial^{m'} M \|_{L^\infty} \| \Delta \partial^m M \|_{L^2} \\
	    + & C \sum_{\substack{m' \leq m , |m'| \geq 2 \\ m'' \leq m -m'}} \| \nabla \partial^{m-m'-m''} M \|_{L^\infty} \| \nabla \partial^{m''} M \|_{L^\infty} \| \partial^{m'} M \|_{L^2} \| \Delta \partial^m M \|_{L^2} \\
	    \leq & C \sum_{\substack{m' \leq m , |m'| = 1 \,, m'' \leq m -m'}} \| \nabla \partial^{m-m'-m''} M \|_{L^2}^{1 - \tfrac{d}{4}} \| \Delta \partial^{m-m'-m''} M \|_{L^2}^\frac{d}{4} \| \partial^{m'} M \|_{H^2} \\
	    & \qquad \qquad \qquad \qquad \qquad \times \| \nabla \partial^{m''} M \|_{L^2}^{1 - \tfrac{d}{4}} \| \Delta \partial^{m''} M \|_{L^2}^\frac{d}{4} \| \Delta \partial^m M \|_{L^2} \\[2mm]
	    + & C \sum_{\substack{m' \leq m , |m'| \geq 2 \,, m'' \leq m -m'}} \| \nabla \partial^{m-m'-m''} M \|_{H^2} \| \nabla \partial^{m''} M \|_{H^2} \| \Delta M \|_{L^2} \| \Delta \partial^m M \|_{L^2} \\
	    \leq & C \| \nabla M \|_{H^s} \| \Delta M \|^2_{H^s} \leq C(\delta) \mathbb{E}_s^\frac{1}{2} (t) \mathbb{D}_s (t) \,.
	  \end{aligned}
	\end{equation}
	Thus, the bounds \eqref{III21} and \eqref{III22} tell us
	\begin{equation}\label{III2}
	  \begin{aligned}
	    I\!I\!I_2 = I\!I\!I_{21} + I\!I\!I_{22} \leq C(\delta) \mathbb{E}_s^\frac{1}{2} (t) \mathbb{D}_s (t) \,.
	  \end{aligned}
	\end{equation}
	Noticing that $ (M \times \Delta \partial^m M ) \cdot \Delta \partial^m M = 0$, we have
	\begin{equation}\label{III3}
	  \begin{aligned}
	    I\!I\!I_3 = & \sum_{0 \neq m' \leq m} C_m^{m'} \big\langle \partial^{m'} M \times \Delta \partial^{m-m'} M , \Delta \partial^m M \big\rangle \\
	    \leq & C \sum_{0 \neq m' \leq m} \| \partial^{m'} M \|_{L^4} \| \Delta \partial^{m-m'} M \|_{L^4} \| \Delta \partial^m M \|_{L^2} \\
	    \leq & C \sum_{0 \neq m' \leq m} \| \partial^{m'} M \|_{H^1} \| \Delta \partial^{m-m'} M \|_{H^1} \| \Delta \partial^m M \|_{L^2} \\
	    \leq & C \| \nabla M \|_{H^s} \| \Delta M \|^2_{H^s} \leq C(\delta) \mathbb{E}_s^\frac{1}{2} (t) \mathbb{D}_s (t) \,,
	  \end{aligned}
	\end{equation}
	where the Sobolev embedding $H^1 (\mathbb{R}^d) \hookrightarrow L^4 (\mathbb{R}^d)$ for $d = 2,3$ is utilized. We thereby derive from plugging bounds \eqref{III1}, \eqref{III2}, \eqref{III3} into the equality \eqref{III1-III2-III3} and summing up for all $|m| \leq s$ that
	\begin{equation}\label{norm-nabla-M-Hs}
	  \begin{aligned}
	    \tfrac{1}{2} \tfrac{\d}{\d t} \| \nabla M \|^2_{H^s} + \| \Delta M \|^2_{H^s} \leq C(\delta) \mathbb{E}_s^\frac{1}{2} (t) \mathbb{D}_s (t) \,.
	  \end{aligned}
	\end{equation}
	
	{\em Step 4. Estimate the norm $\| \partial_t  v \|^2_{H^{s-2}}$.} For all $|m| \leq s - 2$, we apply the derivative operator $\partial^m \partial_t$ on the first $v$-equation of \eqref{MEL-reformulate}, take $L^2$-inner product by multiplying $\partial^m \partial_t v$ and integrate by parts over $x \in \mathbb{R}^d$. We thereby deduce that
	\begin{equation}\label{IV1-IV2-IV3}
	  \begin{aligned}
	    & \tfrac{1}{2} \tfrac{\d}{\d t} \| \partial^m \partial_t v \|^2_{L^2} + \nu \| \nabla \partial^m \partial_t v \|^2_{L^2} + \big\langle \Delta \partial^m \partial_t \psi , \partial^m \partial_t v \big\rangle \\
	    = & \underset{I\!V_1}{\underbrace{ - \big\langle \partial^m \partial_t ( v \cdot \nabla v ) , \partial^m \partial_t v \big\rangle }} \ \underset{I\!V_2}{\underbrace{ - \big\langle \partial^m \partial_t g (G) , \nabla \partial^m \partial_t v \big\rangle }} \ \underset{I\!V_3}{\underbrace{ - \big\langle \partial^m \partial_t ( \nabla M \odot \nabla M ) , \nabla \partial^m \partial_t v \big\rangle }} \,,
	  \end{aligned}
	\end{equation}
	where the divergence-free property of $\partial^m \partial_t v$ is also used.
	
	The term $I\!V_1$ can be calculated that
	\begin{equation}\no
	  \begin{aligned}
	    I\!V_1 = & \underset{I\!V_{11}}{\underbrace{ - \sum_{m' \leq m} C_m^{m'} \big\langle \partial^{m'} \partial_t \cdot \nabla \partial^{m-m'} v , \partial^m \partial_t v \big\rangle }} \\
	    & \underset{I\!V_{12}}{\underbrace{ - \sum_{0 \neq m' \leq m} C_m^{m'} \big\langle \partial^{m'} v \cdot \nabla \partial^{m-m'} \partial_t v , \partial^m \partial_t v \big\rangle }} \,.
	  \end{aligned}
	\end{equation}
	From the Sobolev interpolation inequality $ \| f \|_{L^4 (\mathbb{R}^d)} \leq C \| f \|_{L^2 (\mathbb{R}^d)}^{1 - \tfrac{d}{4}} \| \nabla f \|_{L^2 (\mathbb{R}^d)}^\frac{d}{4} $ with $d=2,3$ and the H\"older inequality, we deduce that
	\begin{equation}\label{IV11}
	  \begin{aligned}
	    I\!V_{11} \leq & C \sum_{m' \leq m} \| \partial^{m'} \partial_t v \|_{L^4} \| \nabla \partial^{m-m'} v \|_{L^2} \| \partial^m \partial_t v \|_{L^4} \\
	    \leq & C \sum_{m' \leq m} \| \partial^{m'} \partial_t v \|^{1-\tfrac{d}{4}}_{L^2} \| \partial^m \partial_t v \|^{1-\tfrac{d}{4}}_{L^2} \| \nabla \partial^{m'} \partial_t v \|^\frac{N}{d}_{L^2} \| \nabla \partial^m \partial_t v \|^\frac{d}{4}_{L^2} \| \nabla \partial^{m-m'} v \|_{L^2} \\
	    = & C \sum_{m' \leq m} \| \partial^{m'} \partial_t v \|^{1-\tfrac{d}{4}}_{L^2} \| \partial^m \partial_t v \|^{1-\tfrac{d}{4}}_{L^2} \| \nabla \partial^{m-m'} v \|^{\tfrac{d}{2} - 1}_{L^2} \\
	    & \qquad \qquad \times \| \nabla \partial^{m-m'} v \|^{2- \tfrac{d}{2}}_{L^2} \| \nabla \partial^{m'} \partial_t v \|^\frac{d}{4}_{L^2} \| \nabla \partial^m \partial_t v \|^\frac{d}{4}_{L^2} \\
	    \leq & C \| \partial_t v \|^{2-\tfrac{d}{2}}_{H^{s-2}} \| v \|^{\tfrac{d}{2}-1}_{H^s} \| \nabla v \|_{H^s}^{2 - \tfrac{d}{2}} \| \nabla \partial_t v \|_{H^{s-2}}^\frac{d}{2} \\
	    \leq & C(\delta) \mathbb{E}_s^\frac{1}{2} (t) \mathbb{D}_s (t) \,.
	  \end{aligned}
	\end{equation}
	Similarly in \eqref{IV11}, we estimate that
	\begin{equation}\label{IV12}
	  \begin{aligned}
	    I\!V_{12} \leq & C \sum_{0 \neq m' \leq m} \| \partial^{m'} v \|_{L^4} \| \partial^m \partial_t v \|_{L^4} \| \nabla \partial^{m-m'} \partial_t v \|_{L^2} \\
	    \leq & C \sum_{0 \neq m' \leq m} \| \partial^{m'} v \|_{L^2}^{1-\tfrac{d}{4}} \| \nabla \partial^{m'} v \|_{L^2}^\frac{d}{4} \| \partial^m \partial_t v \|_{L^2}^{1-\frac{d}{4}} \| \nabla \partial^m \partial_t v \|_{L^2}^\frac{d}{4} \| \nabla \partial^{m-m'} \partial_t v \|_{L^2} \\
	    \leq & C \| v \|_{H^s}^\frac{d}{4} \| \partial_t v \|_{H^{s-2}}^{1-\frac{d}{4}} \| \nabla v \|_{H^s}^{1 - \frac{d}{4}} \| \nabla \partial_t v \|_{H^{s-2}}^{1+\frac{d}{4}} \\
	    \leq & C(\delta) \mathbb{E}_s^\frac{1}{2} (t) \mathbb{D}_s (t) \,.
	  \end{aligned}
	\end{equation}
	Thus, combining the bounds \eqref{IV11} and \eqref{IV12}, we obtain
	\begin{equation}\label{IV1}
	  \begin{aligned}
	    I\!V_1 = I\!V_{11} + I\!V_{12} \leq C(\delta) \mathbb{E}_s^\frac{1}{2} (t) \mathbb{D}_s (t) \,.
	  \end{aligned}
	\end{equation}
	Recalling that $g (G) = O (|G|^2)$, we have $g' (G) = O (|G|)$. Since $G = \nabla \psi$, we know that $\| g' (G) \|_{H^s} \leq C \| \nabla \psi \|_{H^s}$. We thereby estimate the term $I\!V_2$ that
	\begin{equation}\label{IV2}
	  \begin{aligned}
	    I\!V_2 = & - \sum_{m' \leq m} C_m^{m'} \big\langle \partial^{m'} g' (G) \partial^{m-m'} \partial_t G , \nabla \partial^m \partial_t v \big\rangle \\
	    \leq & C \sum_{m' \leq m} \| \partial^{m'} g' (G) \|_{L^\infty} \| \nabla \partial^{m-m'} \partial_t \psi \|_{L^2} \| \nabla \partial^m \partial_t v \|_{L^2} \\
	    \leq & C \sum_{m' \leq m} \| \partial^{m'} g' (G) \|_{H^2} \| \nabla \partial^{m-m'} \partial_t \psi \|_{L^2} \| \nabla \partial^m \partial_t v \|_{L^2} \\
	    \leq & C \| g' (G) \|_{H^s} \| \nabla \partial_t \psi \|_{H^{s-2}} \| \nabla \partial_t v \|_{H^{s-2}} \\
	    \leq & C \| \nabla \psi \|_{H^s} \| \nabla \partial_t \psi \|_{H^{s-2}} \| \nabla \partial_t v \|_{H^{s-2}} \\
	    \leq & C (\delta) \mathbb{E}_s^\frac{1}{2} (t) \mathbb{D}_s (t) \,,
	  \end{aligned}
	\end{equation}
	where the Sobolev embedding $H^2 (\mathbb{R}^d) \hookrightarrow L^\infty (\mathbb{R}^d)$ with $d=2,3$ is also used.
	
	It remains to estimate the term $I\!V_3$. One observes that
	\begin{equation}\no
	  \begin{aligned}
	    I\!V_3 = \underset{I\!V_{31}}{\underbrace{ - \big\langle \partial^m ( \nabla \partial_t M \odot \nabla M ) , \nabla \partial^m \partial_t v \big\rangle }} \ \underset{I\!V_{32}}{\underbrace{ -  \big\langle \partial^m ( \nabla M \odot \nabla \partial_t M ) , \nabla \partial^m \partial_t v \big\rangle }}\,.
	  \end{aligned}
	\end{equation}
	By using the last second $M$-equation of \eqref{MEL-reformulate}, the term $I\!V_{31}$ can be rewritten as
	\begin{equation}\label{IV311-IV312-IV313-IV314}
	  \begin{aligned}
	    I\!V_{31} = & \underset{I\!V_{311}}{\underbrace{ \big\langle \partial^m ( \nabla (v \cdot \nabla M) \odot \nabla M ) , \nabla \partial^m \partial_t v \big\rangle }} \  \underset{I\!V_{312}}{\underbrace{ - \big\langle \partial^m ( \nabla (|\nabla M|^2 M) \odot \nabla M ) , \nabla \partial^m \partial_t v \big\rangle }} \\
	    & \underset{I\!V_{313}}{\underbrace{ - \big\langle \partial^m ( \nabla ( \Delta M) \odot \nabla M ) , \nabla \partial^m \partial_t v \big\rangle }} + \underset{I\!V_{314}}{\underbrace{ \big\langle \partial^m ( \nabla (M \times \Delta M) \odot \nabla M ) , \nabla \partial^m \partial_t v \big\rangle }}\,.
	  \end{aligned}
	\end{equation}
	For the term $I\!V_{311}$, we estimate that
	\begin{equation}\label{IV311}
	  \begin{aligned}
	    I\!V_{311} = & \sum_{m' \leq m , m'' \leq m'} C_m^{m'} C_{m'}^{m''} \big\langle ( \nabla \partial^{m''} v \cdot \nabla \partial^{m'-m''} M ) \odot \nabla \partial^{m-m'} M , \nabla \partial^m \partial_t v \big\rangle \\
	    + & \sum_{m' \leq m , m'' \leq m'} C_m^{m'} C_{m'}^{m''} \big\langle ( \partial^{m''} v \cdot \nabla \partial^{m'-m''} \nabla M ) \odot \nabla \partial^{m-m'} M , \nabla \partial^m \partial_t v \big\rangle \\
	    \leq & C \sum_{m' \leq m , m'' \leq m'} \big( \| \nabla \partial^{m''} v \|_{L^2} \| \nabla \partial^{m'-m''} M \|_{L^\infty} + \| \partial^{m''} v \|_{L^\infty} \| \nabla \partial^{m'-m''} \nabla M \|_{L^2} \big) \\
	    & \qquad \qquad \qquad \qquad \times \| \nabla \partial^{m-m'} M \|_{L^\infty} \| \nabla \partial^m \partial_t v \|_{L^2} \\
	    \leq & C \sum_{m' \leq m , m'' \leq m'} \big( \| \nabla \partial^{m''} v \|_{L^2} \| \nabla \partial^{m'-m''} M \|_{H^2} + \| \partial^{m''} v \|_{H^2} \| \nabla \partial^{m'-m''} \nabla M \|_{L^2} \big) \\
	    & \qquad \qquad \qquad \qquad \times \| \nabla \partial^{m-m'} M \|_{H^2} \| \nabla \partial^m \partial_t v \|_{L^2} \\
	    \leq & C ( \| \nabla v \|_{H^s} \| \nabla M \|_{H^s} + \| v \|_{H^s} \| \Delta M \|_{H^s} ) \| \nabla M \|_{H^s} \| \nabla \partial_t v \|_{H^{s-2}} \\
	    \leq & C(\delta) \mathbb{E}_s (t) \mathbb{D}_s (t) \,,
	  \end{aligned}
	\end{equation}
	where the last third inequality is implied by the Sobolev embedding $H^2 (\mathbb{R}^d) \hookrightarrow L^\infty (\mathbb{R}^d)$ with $d = 2,3$. For the term $I\!V_{312}$, we have
	\begin{equation}\label{IV312}
	  \begin{aligned}
	    I\!V_{312} = & - \sum_{m' \leq m , m'' \leq m'} C_m^{m'} C_{m'}^{m''} \big\langle \partial^{m-m'} |\nabla M|^2 \nabla \partial^{m''} M \odot \nabla \partial^{m' - m''} M, \nabla \partial^m \partial_t v \big\rangle \\
	    & - 2 \sum_{m' \leq m} C_m^{m'} \big\langle \nabla \partial^{m'} \nabla M \cdot \partial^{m-m'} ( (\nabla M) M \odot \nabla M ) , \nabla \partial^m \partial_t v \big\rangle \\
	    \leq & C \sum_{m' \leq m , m'' \leq m'} \| \partial^{m-m'} |\nabla M|^2 \|_{L^\infty} \| \nabla \partial^{m''} M \|_{L^4} \| \nabla \partial^{m' - m''} M \|_{L^4} \| \nabla \partial^m \partial_t v \|_{L^2} \\
	    & + C \sum_{m' \leq m} \| \nabla \partial^{m'} \nabla M \|_{L^2} \| \partial^{m-m'} ( (\nabla M) M \odot \nabla M ) \|_{L^\infty} \| \nabla \partial^m \partial_t v \|_{L^2} \\
	    \leq & C \sum_{m' \leq m , m'' \leq m'} \| \partial^{m-m'} |\nabla M|^2 \|_{H^2} \| \nabla \partial^{m''} M \|_{L^2}^{1-\frac{d}{4}} \| \Delta \partial^{m''} M \|_{L^2}^{\frac{d}{4}} \\
	    & \qquad \qquad \qquad \times \| \nabla \partial^{m' - m''} M \|_{L^2}^{1-\frac{d}{4}} \| \Delta \partial^{m' - m''} M \|_{L^2}^{\frac{d}{4}} \| \nabla \partial^m \partial_t v \|_{L^2} \\
	    & + C \sum_{m' \leq m} \| \nabla \partial^{m'} \nabla M \|_{L^2} \| \partial^{m-m'} ( (\nabla M) M \odot \nabla M ) \|_{H^2} \| \nabla \partial^m \partial_t v \|_{L^2} \\
	    \leq & C ( 1 + \| \nabla M \|_{H^s} ) \| \nabla M \|^2_{H^s} \| \Delta M \|_{H^s} \| \nabla \partial_t v \|_{H^{s-2}} \\
	    \leq & C (\delta) \big( 1 + \mathbb{E}_s^\frac{1}{2} (t) \big) \mathbb{E}_s (t) \mathbb{D}_s (t) \,,
	  \end{aligned}
	\end{equation}
	where the geometric constraint $|M| = 1$ and the Sobolev embedding $H^2 (\mathbb{R}^d) \hookrightarrow L^\infty (\mathbb{R}^d)$ $(d=2,3)$ are utilized. For the term $I\!V_{313}$, similarly in \eqref{IV312}, we deduce that
	\begin{equation}\label{IV313}
	  \begin{aligned}
	    I\!V_{313} = & - \sum_{m' \leq m} C_m^{m'} \big\langle \partial^{m'} \nabla \Delta M \odot \nabla \partial^{m-m'} M , \nabla \partial^m \partial_t v \big\rangle \\
	    \leq & C \sum_{m' \leq m} \| \partial^{m'} \nabla \Delta M \|_{L^2} \| \nabla \partial^{m-m'} M \|_{L^\infty} \| \nabla \partial^m \partial_t v \|_{L^2} \\
	    \leq & C \sum_{m' \leq m} \| \partial^{m'} \nabla \Delta M \|_{L^2} \| \nabla \partial^{m-m'} M \|_{H^2} \| \nabla \partial^m \partial_t v \|_{L^2} \\
	    \leq & C \| \nabla M \|_{H^s} \| \Delta M \|_{H^s} \| \nabla \partial_t v \|_{H^{s-2}} \\
	    \leq & C(\delta) \mathbb{E}_s^\frac{1}{2} (t) \mathbb{D}_s (t) \,,
	  \end{aligned}
	\end{equation}
	where the Sobolev embedding $H^2 (\mathbb{R}^d) \hookrightarrow L^\infty (\mathbb{R}^d)$ $(d=2,3)$ is also utilized. For the term $I\!V_{314}$, we estimate that
	\begin{equation}\label{IV314}
	  \begin{aligned}
	    I\!V_{314} = & \sum_{m' \leq m , m'' \leq m'} C_m^{m'} C_{m'}^{m''} \big\langle ( \nabla \partial^{m'-m''} M \times \Delta \partial^{m''} M  ) \odot \nabla \partial^{m-m'} M , \nabla \partial^m \partial_t v \big\rangle \\
	    & + \sum_{m' \leq m , m'' \leq m'} C_m^{m'} C_{m'}^{m''} \big\langle ( \partial^{m'-m''} M \times \nabla \Delta \partial^{m''} M ) \odot \nabla \partial^{m-m'} M , \nabla \partial^m \partial_t v \big\rangle \\
	    \leq & C \sum_{m' \leq m , m'' \leq m'} \| \nabla \partial^{m'-m''} M \|_{L^\infty} \| \Delta \partial^{m''} M \|_{L^2} \| \nabla \partial^{m-m'} M \|_{L^\infty} \| \nabla \partial^m \partial_t v \|_{L^2} \\
	    & + C \sum_{m' \leq m , m'' \leq m'} \| \partial^{m'-m''} M \|_{L^\infty} \| \nabla \Delta \partial^{m''} M \|_{L^2} \| \nabla \partial^{m-m'} M \|_{L^\infty} \| \nabla \partial^m \partial_t v \|_{L^2} \\
	    \leq & C \big[ \| \nabla M \|_{H^s} \| \Delta M \|_{H^s} + ( 1 + \| \nabla M \|_{H^s} ) \| \Delta M \|_{H^s} \big] \| \nabla M \|_{H^s} \| \nabla \partial_t v \|_{H^{s-2}} \\
	    \leq & C ( 1 + \| \nabla M \|_{H^s} ) \| \nabla M \|_{H^s} \| \Delta M \|_{H^s} \| \nabla \partial_t v \|_{H^{s-2}} \\
	    \leq & C(\delta) \big( \mathbb{E}_s^\frac{1}{2} (t) + \mathbb{E}_s (t) \big) \mathbb{D}_s (t) \,.
	  \end{aligned}
	\end{equation}
	Consequently, we plug the bounds \eqref{IV311}, \eqref{IV312}, \eqref{IV313} and \eqref{IV314} into the equality \eqref{IV311-IV312-IV313-IV314} and then we obtain
	\begin{equation}\label{IV31}
	  \begin{aligned}
	    I\!V_{31} \leq C(\delta) \big( \mathbb{E}_s^\frac{1}{2} (t) + \mathbb{E}_s^\frac{3}{2} (t) \big) \mathbb{D}_s (t) \,.
	  \end{aligned}
	\end{equation}
	From employing the similar arguments in \eqref{IV31}, we can deduce that
	\begin{equation}\no
	  \begin{aligned}
	    I\!V_{32} \leq C(\delta) \big( \mathbb{E}_s^\frac{1}{2} (t) + \mathbb{E}_s^\frac{3}{2} (t) \big) \mathbb{D}_s (t) \,.
	  \end{aligned}
	\end{equation}
	We thereby have
	\begin{equation}\label{IV3}
	  \begin{aligned}
	    I\!V_3 = I\!V_{31} + I\!V_{32} \leq C(\delta) \big( \mathbb{E}_s^\frac{1}{2} (t) + \mathbb{E}_s^\frac{3}{2} (t) \big) \mathbb{D}_s (t) \,.
	  \end{aligned}
	\end{equation}
	As a result, by substituting the bounds \eqref{IV1}, \eqref{IV2} and \eqref{IV3} into the equality \eqref{IV1-IV2-IV3} and summing up for $|m| \leq s - 2$, we imply that
	\begin{equation}\label{norm-vt-Hs-2}
	  \begin{aligned}
	    \tfrac{1}{2} \tfrac{\d}{\d t} \| \partial_t v \|^2_{H^{s-2}} + \nu \| \nabla \partial_t v \|^2_{H^{s-2}} + \sum_{|m| \leq s - 2} \big\langle \Delta \partial^m \partial_t \psi , \partial^m \partial_t v \big\rangle \\
	    \leq C(\delta) \big( \mathbb{E}_s^\frac{1}{2} (t) + \mathbb{E}_s^\frac{3}{2} (t) \big) \mathbb{D}_s (t) \,.
	  \end{aligned}
	\end{equation}
	
	{\em Step 5. Estimate the norm $\| \nabla \partial_t \psi \|^2_{H^{s-2}}$.} For all $|m| \leq s - 2$, we act the derivative operator $\partial^m \partial_t$ on the third $\psi$-equation and take $L^2$-inner product by multiplying $\Delta \partial^m \partial_t \psi$. We then obtain
	\begin{equation}\label{V-equ}
	  \begin{aligned}
	    \tfrac{1}{2} \tfrac{\d}{\d t} \| \nabla \partial^m \partial_t \psi \|^2_{L^2} - \big\langle \Delta \partial^m \partial_t \psi , \partial^m \partial_t v \big\rangle = \underset{V}{\underbrace{ - \big\langle \partial^m \nabla \partial_t ( v \cdot \nabla \psi ) , \nabla \partial^m \partial_t \psi \big\rangle }} \,.
	  \end{aligned}
	\end{equation}
	We first decompose the term $V$ into four parts:
	\begin{equation}\label{V-V1-V4}
	  \begin{aligned}
	    V = \ & \underset{V_1}{\underbrace{ - \big\langle \partial^m ( \nabla \partial_t v \cdot \nabla \psi ) , \nabla \partial^m \partial_t \psi \big\rangle }} \ \ \underset{V_2}{\underbrace{ - \big\langle \partial^m ( \partial_t v \cdot \nabla \nabla \psi ) , \nabla \partial^m \partial_t \psi \big\rangle }} \\
	    & \underset{V_3}{\underbrace{ - \big\langle \partial^m ( \nabla v \cdot \nabla \partial_t \psi ) , \nabla \partial^m \partial_t \psi \big\rangle }} \ \ \underset{V_4}{\underbrace{ - \big\langle \partial^m ( v \cdot \nabla \nabla \partial_t \psi ) , \nabla \partial^m \partial_t \psi \big\rangle }} \,.
	  \end{aligned}
	\end{equation}
	It is derived from the Sobolev embedding $H^2 (\mathbb{R}^d) \hookrightarrow L^\infty (\mathbb{R}^d)$ for $d = 2, 3$ that
	\begin{equation}\label{V1}
	  \begin{aligned}
	    V_1 = & - \sum_{m' \leq m} C_m^{m'} \big\langle \nabla \partial^{m-m'} \partial_t v \cdot \nabla \partial^{m'} \psi , \nabla \partial^m \partial_t \psi \big\rangle \\
	    \leq & C \sum_{m' \leq m} \| \nabla \partial^{m-m'} \partial_t v \|_{L^2} \| \nabla \partial^{m'} \psi \|_{L^\infty} \| \nabla \partial^m \partial_t \psi \|_{L^2} \\
	    \leq & C \sum_{m' \leq m} \| \nabla \partial^{m-m'} \partial_t v \|_{L^2} \| \nabla \partial^{m'} \psi \|_{H^2} \| \nabla \partial^m \partial_t \psi \|_{L^2} \\
	    \leq & C \| \nabla \psi \|_{H^s} \| \nabla \partial_t \psi \|_{H^{s-2}} \| \nabla \partial_t v \|_{H^{s-2}} \\
	    \leq & C (\delta) \mathbb{E}_s^\frac{1}{2} (t) \mathbb{D}_s (t) \,.
	  \end{aligned}
	\end{equation}
	From the Sobolev interpolation inequality $\| f \|_{L^4 (\mathbb{R}^d)} \leq C \| f \|_{L^2 (\mathbb{R}^d)}^{1-\frac{d}{4}} \| \nabla f \|_{L^2 (\mathbb{R}^d)}^\frac{d}{4} $ and Sobolev embedding $H^1 (\mathbb{R}^d) \hookrightarrow L^4 (\mathbb{R}^d)$ with $d=2,3$, we deduce that
	\begin{equation}\label{V2}
	  \begin{aligned}
	    V_2 = & - \sum_{m' \leq m} C_m^{m'} \big\langle \partial^{m'} \partial_t v \cdot \nabla \partial^{m-m'} \nabla \psi , \nabla  \partial^m \partial_t \psi \big\rangle \\
	    \leq & C \sum_{m' \leq m} \| \partial^{m'} \partial_t v \|_{L^4} \| \nabla \partial^{m-m'} \nabla \psi \|_{L^4} \| \nabla  \partial^m \partial_t \psi \|_{L^2} \\
	    \leq & C \sum_{m' \leq m} \| \partial^{m'} \partial_t v \|_{L^2}^{1-\frac{d}{4}} \| \nabla \partial^{m'} \partial_t v \|_{L^2}^\frac{d}{4} \| \nabla \partial^{m-m'} \nabla \psi \|_{H^1} \| \nabla  \partial^m \partial_t \psi \|_{L^2} \\
	    \leq & C \| \partial_t v \|_{H^{s-2}}^{1-\frac{d}{4}} \| \nabla \psi \|_{H^s}^\frac{d}{4} \| \nabla \psi \|_{H^s}^{1-\frac{d}{4}} \| \nabla \partial_t v \|_{H^{s-2}}^\frac{d}{4} \| \nabla \partial_t \psi \|_{H^{s-2}} \\
	    \leq & C (\delta) \mathbb{E}_s^\frac{1}{2} (t) \mathbb{D}_s (t) \,.
	  \end{aligned}
	\end{equation}
	For the term $I\!V_3$, we estimate that
	\begin{equation}\label{V3}
	  \begin{aligned}
	    V_3 = & - \sum_{m' \leq m} C_m^{m'} \big\langle \nabla \partial^{m'} v \cdot \nabla \partial^{m-m'} \partial_t \psi , \nabla \partial^m \partial_t \psi \big\rangle \\
	    \leq & C \sum_{m' \leq m} \| \nabla \partial^{m'} v \|_{L^\infty} \| \nabla \partial^{m-m'} \partial_t \psi \|_{L^2} \| \nabla \partial^m \partial_t \psi \|_{L^2} \\
	    \leq & C \sum_{m' \leq m} \| \nabla \partial^{m'} v \|_{H^2} \| \nabla \partial^{m-m'} \partial_t \psi \|_{L^2} \| \nabla \partial^m \partial_t \psi \|_{L^2} \\
	    \leq & C \| \nabla v \|_{H^s} \| \nabla \partial \psi \|^2_{H^{s-2}} \leq C (\delta) \mathbb{E}_s^\frac{1}{2} (t) \mathbb{D}_s (t) \,,
	  \end{aligned}
	\end{equation}
	where the Sobolev embedding $H^2 (\mathbb{R}^d) \hookrightarrow L^\infty (\mathbb{R}^d)$ for $d = 2, 3$ is used. Thanks to the divergence-free property of the velocity field $v$, the term $V_4$ can be estimated as
	\begin{equation}\label{V4}
	  \begin{aligned}
	    V_4 = & - \sum_{0 \neq m' \leq m} C_m^{m'} \big\langle \partial^{m'} v \cdot \nabla \nabla \partial^{m-m'} \partial_t \psi , \nabla \partial^m \partial_t \psi \big\rangle \\
	    \leq & C \sum_{0 \neq m' \leq m} \| \partial^{m'} v \|_{L^\infty} \| \nabla \nabla \partial^{m-m'} \partial_t \psi \|_{L^2} \| \nabla \partial^m \partial_t \psi \|_{L^2} \\
	    \leq & C \sum_{0 \neq m' \leq m} \| \partial^{m'} v \|_{H^2} \| \nabla \nabla \partial^{m-m'} \partial_t \psi \|_{L^2} \| \nabla \partial^m \partial_t \psi \|_{L^2} \\
	    \leq & C \| v \|_{H^s} \| \nabla \partial_t \psi \|^2_{H^{s-2}} \leq C (\delta) \mathbb{E}_s^\frac{1}{2} (t) \mathbb{D}_s (t) \,.
	  \end{aligned}
	\end{equation}
	Plugging the estimates \eqref{V1}, \eqref{V2}, \eqref{V3}, \eqref{V4} into \eqref{V-V1-V4}, we obtain
	\begin{equation}\no
	  \begin{aligned}
	    V \leq C (\delta) \mathbb{E}_s^\frac{1}{2} (t) \mathbb{D}_s (t) \,,
	  \end{aligned}
	\end{equation}
	which immediately implies by substituting the previous bound of $V$ into \eqref{V-equ} and summing up for all $|m| \leq s-2$ that
	\begin{equation}\label{norm-nabla-psi-t-Hs-2-1}
	  \begin{aligned}
	    \tfrac{1}{2} \tfrac{\d}{\d t} \| \nabla \partial_t \psi \|^2_{H^{s-2}} - \sum_{|m| \leq s - 2} \big\langle \Delta \partial^m \partial_t \psi , \partial^m \partial_t v \big\rangle \leq C (\delta) \mathbb{E}_s^\frac{1}{2} (t) \mathbb{D}_s (t) \,.
	  \end{aligned}
	\end{equation}
	Furthermore, we take derivative operator $\nabla$ on the third $\psi$-equation of \eqref{MEL-reformulate}, namely,
	\begin{equation}\no
	  \begin{aligned}
	    \nabla \partial_t \psi = - \nabla v - \nabla ( v \cdot \nabla \psi ) \,.
	  \end{aligned}
	\end{equation}
	We thereby deduce that
	\begin{equation*}
	  \begin{aligned}
	    \| \nabla \partial_t \psi \|^2_{H^{s-2}} \leq & \| \nabla v \|^2_{H^{s-2}} + \| \nabla ( v \cdot \nabla \psi ) \|^2_{H^{s-2}} \\
	    \leq & \| \nabla v \|^2_{H^s} + \| v \cdot \nabla \psi \|^2_{H^{s-1}} \\
	    \leq & \| \nabla v \|^2_{H^s} + C \| v \|^2_{H^s} \| \nabla \psi \|^2_{H^s} \\
	    \leq & \| \nabla v \|^2_{H^s} + C (\delta) \mathbb{E}_s (t) \mathbb{D}_s (t) \,.
	  \end{aligned}
	\end{equation*}
	Hence,
	\begin{equation}\label{norm-nabla-psi-t-Hs-2-2}
	  \begin{aligned}
	     \| \nabla \partial_t \psi \|^2_{H^{s-2}} - \| \nabla v \|^2_{H^s} \leq C (\delta) \mathbb{E}_s (t) \mathbb{D}_s (t) \,.
	  \end{aligned}
	\end{equation}
	
	{\em Step 6. Close the energy estimates.} We first add the estimate \eqref{norm-vt-Hs-2} to \eqref{norm-nabla-psi-t-Hs-2-1}, so that the unsigned quantity $\sum_{|m| \leq s-2} \big\langle \Delta \partial^m \partial_t \psi , \partial^m \partial_t v \big\rangle $ will be eliminated. More precisely, we obtain
	\begin{equation}\label{Close-1}
	  \begin{aligned}
	    \tfrac{1}{2} \tfrac{\d}{\d t} \big( \| \partial_t v \|^2_{H^{s-2}} + \| \nabla \partial_t \psi \|^2_{H^{s-2}} \big) + \nu \| \nabla \partial_t v \|^2_{H^{s-2}} \leq C(\delta) \big( \mathbb{E}_s^\frac{1}{2} (t) + \mathbb{E}_s^\frac{3}{2} (t) \big) \mathbb{D}_s (t) \,.
	  \end{aligned}
	\end{equation}
	In order to absorb the negative term $- \tfrac{4 c_0 K_s}{\nu} \| \nabla \partial_t v \|_{H^{s-2}} \| \nabla \psi \|_{H^s}$ in the inequality \eqref{norm-nabla-psi-Hs}, we multiply \eqref{norm-nabla-psi-Hs} by a small $\delta > 0$ to be determined and add it to the previous inequality \eqref{Close-1}. We then have
	\begin{equation}\label{Close-2}
	  \begin{aligned}
	    & \tfrac{1}{2} \tfrac{\d}{\d t} \big( \delta \| \nabla \psi \|^2_{H^s} + \| \partial_t v \|^2_{H^{s-2}} + \| \nabla \partial_t \psi \|^2_{H^{s-2}} \big) \\
	    + & \tfrac{\delta}{\nu} \| \nabla \psi \|^2_{H^s} + \nu \| \nabla \partial_t v \|^2_{H^{s-2}} - \tfrac{4 c_0 K_s}{\nu} \delta \| \nabla \partial_t v \|_{H^{s-2}} \| \nabla \psi \|_{H^s} \\
	    \leq & C(\delta) \big( \mathbb{E}_s^\frac{1}{2} (t) + \mathbb{E}_s^\frac{3}{2} (t) \big) \mathbb{D}_s (t) \,.
	  \end{aligned}
	\end{equation}
	We then will absorb the negative term $- \| \nabla v \|_{H^s} \| \nabla \psi \|_{H^s}$ in the inequality \eqref{norm-v-Hs} by adding $\delta^2$ times of \eqref{norm-v-Hs} to the previous bound \eqref{Close-2}. To be more precise, we obtain
	\begin{equation}\label{Close-3}
	  \begin{aligned}
	    & \tfrac{1}{2} \tfrac{\d}{\d t} \big( \delta^2 \| v \|^2_{H^s} + \delta \| \nabla \psi \|^2_{H^s} + \| \partial_t v \|^2_{H^{s-2}} + \| \nabla \partial_t \psi \|^2_{H^{s-2}} \big) \\
	    + & \delta^2 \nu \| \nabla v \|^2_{H^s} + \tfrac{\delta}{\nu} \| \nabla \psi \|^2_{H^s} + \nu \| \nabla \partial_t v \|^2_{H^{s-2}} \\
	    - & \tfrac{4 c_0 K_s}{\nu} \delta \| \nabla \partial_t v \|_{H^{s-2}} \| \nabla \psi \|_{H^s} - \delta^2 \| \nabla v \|_{H^s} \| \nabla \psi \|_{H^s} \\
	    \leq & C(\delta) \big( \mathbb{E}_s^\frac{1}{2} (t) + \mathbb{E}_s^\frac{3}{2} (t) \big) \mathbb{D}_s (t) \,.
	  \end{aligned}
	\end{equation}
	Furthermore, we multiply \eqref{norm-nabla-psi-t-Hs-2-2} by $\tfrac{1}{2} \delta^2 \nu$ and add it to \eqref{Close-3}, so that the negative term $- \| \nabla v \|^2_{H^s}$ in \eqref{norm-nabla-psi-t-Hs-2-2} will be absorbed by choosing a proper small $\delta > 0$. More precisely, we have
	\begin{equation}
	  \begin{aligned}
	    & \tfrac{1}{2} \tfrac{\d}{\d t} \big( \delta^2 \| v \|^2_{H^s} + \delta \| \nabla \psi \|^2_{H^s} + \| \partial_t v \|^2_{H^{s-2}} + \| \nabla \partial_t \psi \|^2_{H^{s-2}} \big) \\
	    + & \tfrac{1}{2} \delta^2 \nu \| \nabla v \|^2_{H^s} + \tfrac{\delta}{\nu} \| \nabla \psi \|^2_{H^s} + \nu \| \nabla \partial_t v \|^2_{H^{s-2}} + \tfrac{1}{2} \delta^2 \nu \| \nabla \partial_t \psi \|^2_{H^{s-2}} \\
	    - & \tfrac{4 c_0 K_s}{\nu} \delta \| \nabla \partial_t v \|_{H^{s-2}} \| \nabla \psi \|_{H^s} - \delta^2 \| \nabla v \|_{H^s} \| \nabla \psi \|_{H^s} \\
	    \leq & C(\delta) \big( \mathbb{E}_s^\frac{1}{2} (t) + \mathbb{E}_s^\frac{3}{2} (t) \big) \mathbb{D}_s (t) \,.
	  \end{aligned}
	\end{equation}
	Finally, we add \eqref{norm-nabla-M-Hs} to the above inequality and obtain
	\begin{equation}\no
	  \begin{aligned}
	    & \tfrac{1}{2} \tfrac{\d}{\d t} \big( \delta^2 \| v \|^2_{H^s} + \delta \| \nabla \psi \|^2_{H^s} + \| \nabla M \|^2_{H^s} + \| \partial_t v \|^2_{H^{s-2}} + \| \nabla \partial_t \psi \|^2_{H^{s-2}} \big) \\
	    + & \tfrac{1}{2} \delta^2 \nu \| \nabla v \|^2_{H^s} + \tfrac{\delta}{\nu} \| \nabla \psi \|^2_{H^s} + \| \Delta M \|^2_{H^s} + \nu \| \nabla \partial_t v \|^2_{H^{s-2}} + \tfrac{1}{2} \delta^2 \nu \| \nabla \partial_t \psi \|^2_{H^{s-2}} \\
	    - & \tfrac{4 c_0 K_s}{\nu} \delta \| \nabla \partial_t v \|_{H^{s-2}} \| \nabla \psi \|_{H^s} - \delta^2 \| \nabla v \|_{H^s} \| \nabla \psi \|_{H^s} \\
	    \leq & C(\delta) \big( \mathbb{E}_s^\frac{1}{2} (t) + \mathbb{E}_s^\frac{3}{2} (t) \big) \mathbb{D}_s (t) \,.
	  \end{aligned}
	\end{equation}
	Notice that the Young's inequality reduces to
	\begin{equation*}
	  \begin{aligned}
	    & - \tfrac{4 c_0 K_s}{\nu} \delta \| \nabla \partial_t v \|_{H^{s-2}} \| \nabla \psi \|_{H^s} \geq - \tfrac{\delta}{2 \nu} \| \nabla \psi \|^2_{H^s} - \tfrac{16 c_0^2 K_s^2}{2 \nu} \delta \| \nabla \partial_t v \|^2_{H^{s-2}} \,, \\
	    & - \delta^2 \| \nabla v \|_{H^s} \| \nabla \psi \|_{H^s} \geq - \nu \delta^3 \| \nabla v \|^2_{H^s} - \tfrac{\delta}{4 \nu} \| \nabla \psi \|^2_{H^s} \,,
	  \end{aligned}
	\end{equation*}
	which implies that
	\begin{equation}
	  \begin{aligned}
	    & \tfrac{1}{2} \delta^2 \nu \| \nabla v \|^2_{H^s} + \tfrac{\delta}{\nu} \| \nabla \psi \|^2_{H^s} + \| \Delta M \|^2_{H^s} + \nu \| \nabla \partial_t v \|^2_{H^{s-2}} + \tfrac{1}{2} \delta^2 \nu \| \nabla \partial_t \psi \|^2_{H^{s-2}} \\
	    & - \tfrac{4 c_0 K_s}{\nu} \delta \| \nabla \partial_t v \|_{H^{s-2}} \| \nabla \psi \|_{H^s} - \delta^2 \| \nabla v \|_{H^s} \| \nabla \psi \|_{H^s} \\
	    \geq & \tfrac{1}{4} \nu \delta^2 \| \nabla v \|^2_{H^s} + \tfrac{\delta}{4 \nu} \| \nabla \psi \|^2_{H^s} + \| \Delta M \|^2_{H^s} + \tfrac{1}{2} \nu \| \nabla \partial_t v \|^2_{H^{s-2}} + \tfrac{1}{2} \nu \delta^2 \| \nabla \partial_t \psi \|^2_{H^{s-2}} \\
	    = & \tfrac{1}{2} \mathbb{D}_s (t)
	  \end{aligned}
	\end{equation}
	provided that $\tfrac{1}{2} \delta^2 \nu - \nu \delta^3 \geq \tfrac{1}{4} \delta^2 \nu > 0$ and $\nu - \tfrac{16 c_0^2 K_s^2}{2 \nu} \delta \geq \tfrac{1}{2} \nu$, i.e.,
	\begin{equation}\label{Small-delta}
	  \begin{aligned}
	    0 < \delta \leq \min \big\{ \tfrac{1}{4} , \tfrac{\nu^2}{16 c_0^2 K_s^2} \big\} \,.
	  \end{aligned}
	\end{equation}
	Consequently, we have
	\begin{equation*}
	  \begin{aligned}
	    \tfrac{\d}{\d t} \mathbb{E}_s (t) + \mathbb{D}_s (t) \leq C(\delta) \big( \mathbb{E}_s^\frac{1}{2} (t) + \mathbb{E}_s^\frac{3}{2} (t) \big) \mathbb{D}_s (t) \,,
	  \end{aligned}
	\end{equation*}
	and the proof of Proposition \ref{Prop-Global-Est} is finished.	
\end{proof}

\noindent{\bf Proof of Theorem \ref{Thm-Global}: global well-posedness with small initial data.} Based on Proposition \ref{Prop-Global-Est}, we now start to prove the global-in-time existence to the system \eqref{MEL-reformulate}, which is equivalent to the equations \eqref{MEL} with given $H_{ext} = 0$. First, by using the system \eqref{MEL-original} and $\psi (0)$ obeying $\nabla \psi (0) = G_0$, we know that
\begin{equation}
  \begin{aligned}
    & \| \partial_t v (0) \|_{H^{s-2}} \leq \nu \| \Delta v_0 \|_{H^{s-2}} + \| \mathcal{P} \nabla \cdot G_0 \|_{H^{s-2}} + \| \mathcal{P} (v_0 \cdot \nabla v_0) \|_{H^{s-2}} \\
    & \qquad \qquad \qquad \quad + \| \mathcal{P} \nabla \cdot g (G_0) \|_{H^{s-2}} + \| \mathcal{P} \nabla \cdot ( \nabla M_0 \odot \nabla M_0 ) \|_{H^{s-2}} \\
    \leq & C ( \| v_0 \|_{H^s} + \| G_0 \|_{H^s} + \| v_0 \|^2_{H^s} + \| G_0 \|^2_{H^s} + \| \nabla M_0 \|^2_{H^s} ) \\
    \leq & C_1 ( 1 + \| v_0 \|_{H^s} + \| F_0 - I \|_{H^s} + \| \nabla M_0 \|_{H^s} ) ( \| v_0 \|_{H^s} + \| F_0 - I \|_{H^s} + \| \nabla M_0 \|_{H^s} ) \,,
  \end{aligned}
\end{equation}
where the symbol $\mathcal{P}$ is the Leray projection on $\R^d$, and
\begin{equation}
  \begin{aligned}
    \| \partial_t \nabla \psi (0) \|_{H^{s-2}} \leq & \| \nabla v_0 \|_{H^{s-2}} + \| v_0 \cdot G_0 \|_{H^{s-2}} \\
    \leq & C ( \| \nabla v_0 \|_{H^{s-2}} + \| \nabla ( v_0 \cdot G_0 ) \|_{H^{s-2}} ) \\
    \leq & C_2  \| v_0 \|_{H^s} ( 1 + \| F_0 - I \|_{H^s} ) \,.
  \end{aligned}
\end{equation}
Consequently, by the definition of energy functional $\mathbb{E}_s (t)$, the inequality \eqref{G-F-relation} and the small size of initial condition \eqref{IC-small-size}, we imply that
\begin{equation}
  \begin{aligned}
    \mathbb{E}_s (0) \leq & ( 4 C_1^2 + C_2^2 + \delta^2 ) ( 1 + \| v_0 \|^2_{H^s} + \| \nabla M_0 \|^2_{H^s} + \| F_0 - I \|^2_{H^s} ) \\
    & \qquad \times ( \| v_0 \|^2_{H^s} + \| \nabla M_0 \|^2_{H^s} + \| F_0 - I \|^2_{H^s} ) \\
    \leq & C_{\!_\#} ( 1 + \eps_0 ) \eps_0 \,,
  \end{aligned}
\end{equation}
where $ C_{\!_\#} = 4 C_1^2 + C_2^2 + \delta^2 > 0 $ and $\eps_0 > 0$ is small to be determined.

We now choose small $\eps_0 \in (0,1)$ such that
\begin{equation*}
  \begin{aligned}
    c_1 \big( \mathbb{E}_s^\frac{1}{2} (0) + \mathbb{E}_s^\frac{3}{2} (0) \big) \leq c_1 \Big( \sqrt{C_{\!_\#} (1 + \eps_0) \eps_0} + \sqrt{C_{\!_\#} (1 + \eps_0)^3 \eps_0} \ \Big) \leq 3 c_1 \sqrt{2 C_{\!_\#}} \sqrt{\eps_0} \leq \tfrac{1}{4} \,,
  \end{aligned}
\end{equation*}
where $c_1 > 0$ is mentioned in Proposition \ref{Prop-Global-Est}. More precisely, if
\begin{equation}\label{Small-eps0}
  \begin{aligned}
    0 < \eps_0 \leq \min \Big\{ 1, \tfrac{1}{288 c_1^2 C_{\!_\#}} \Big\} \,,
  \end{aligned}
\end{equation}
we have
\begin{equation}\label{Small-Initial-Energy}
  \begin{aligned}
    c_1 \big( \mathbb{E}_s^\frac{1}{2} (0) + \mathbb{E}_s^\frac{3}{2} (0) \big) \leq \tfrac{1}{4}
  \end{aligned}
\end{equation}
holds under the small initial conditions \eqref{IC-small-size} with small $\eps_0 > 0$ given in \eqref{Small-eps0}. We define the following number
\begin{equation}
  \begin{aligned}
    T^* = \sup \Big\{ \tau > 0 ; \sup_{t \in [0, \tau]} c_1 \big( \mathbb{E}_s^\frac{1}{2} (t) + \mathbb{E}_s^\frac{3}{2} (t) \big) \leq \tfrac{1}{2} \Big\} \geq 0 \,.
  \end{aligned}
\end{equation}
Then, from the initial bound \eqref{Small-Initial-Energy} and the continuity of the energy functional $\mathbb{E}_s (t)$, we deduce that $T^* > 0$. Thus for all $t \in [0, T^*]$
\begin{equation*}
  \begin{aligned}
    \tfrac{\d}{\d t} \mathbb{E}_s (t) + \mathbb{D}_s (t) \leq 0 \,,
  \end{aligned}
\end{equation*}
which immediately means that $ \mathbb{E}_s (t) \leq \mathbb{E}_s (0) \leq C_{\!_\#} ( 1 + \eps_0 ) \eps_0 $ holds for all $t \in [0, T^*]$. Consequently,
\begin{equation}
  \begin{aligned}
    \sup_{t \in [0, T^*]} c_1 \big( \mathbb{E}_s^\frac{1}{2} (t) + \mathbb{E}_s^\frac{3}{2} (t) \big) \leq \tfrac{1}{4} \,.
  \end{aligned}
\end{equation}
We now claim that $T^* = + \infty$. Otherwise, if $T^* < + \infty$, the continuity of the energy $\mathbb{E}_s (t)$ implies that there is a small positive $\eps > 0$ such that
\begin{equation*}
  \begin{aligned}
    \sup_{t \in [0, T^* + \eps]} c_1 \big( \mathbb{E}_s^\frac{1}{2} (t) + \mathbb{E}_s^\frac{3}{2} (t) \big) \leq \tfrac{3}{8} < \tfrac{1}{2} \,,
  \end{aligned}
\end{equation*}
which contradicts to the definition of $T^*$. Therefore, we get
\begin{equation}
  \begin{aligned}
    \mathbb{E}_s (t) + \int_0^t \mathbb{D}_s (\tau) \d \tau \leq \mathbb{E}_s (t) \leq 2 C_{\!_\#} \eps_0
  \end{aligned}
\end{equation}
for all $t \in \R^+$, and as a consequence, the proof of Theorem \ref{Thm-Global} is finished.

\section*{Acknowledgment}

This work was supported by grants from the National Natural Science Foundation of China under contract No. 11471181 and No. 11731008.

\bibliography{reference}

\begin{thebibliography}{99}
	
	\bibitem{A-S-NA1992} F. Alouges and A. Soyeur, On global weak solutions for Landau–Lifshitz equations: Existence
	and nonuniqueness, {\em Nonlinear Anal.}, {\bf 18} (1992), pp. 1071-1084.
	
	\bibitem{BFLS-2018-SIAM} B. Bene$\check{\textrm{s}}$ov\'a, J. Forster, C. Liu and A. Schl\"omerkemper. Existence of weak solutions to an evolutionary model for magnetoelasticity. {\em SIAM J. Math. Anal.}, {\bf 50} (2018), no. 1, 1200-1236.

    \bibitem{Brown-1966} W. F. Brown, {\em Magnetoelastic Interactions,} Springer, New York, 1966.

    \bibitem{CEF-2011} G. Carbou, M.A. Efendiev, and P. Fabrie, Global weak solutions for the Landau-Lifschitz
    equation with magnetostriction, {\em Math. Methods Appl. Sci.}, {\bf 34} (2011), pp. 1274-1288.

    \bibitem{CF-2001} G. Carbou and P. Fabrie, Regular solutions for Landau-Lifschitz equation in a bounded
    domain, {\em Differential Integral Equations}, {\bf 14} (2001), pp. 213-229.

    \bibitem{CSVC-2009} M. Chipot, I.I. Shafrir, V. Valente, and G.V. Caffarelli, On a hyperbolic-parabolic system
    arising in magnetoelasticity, {\em J. Math. Anal. Appl.}, {\bf 352} (2009), pp. 120-131.

    \bibitem{DeSimone-Dolzmans-ARMA1998} A. DeSimone and G. Dolzmann,
     Existence of minimizers for a variational problem in two-dimensional nonlinear magnetoelasticity. {\em Arch. Rational Mech. Anal.} {\bf 144} (1998), no. 2, 107-120.

    \bibitem{DeSimone-James-2002} A. DeSimone and R. D. James,
    A constrained theory of magnetoelasticity. {\em J. Mech. Phys. Solids} {\bf 50} (2002), no. 2, 283-320.

    \bibitem{Forster-2016} J. Forster, {\em Variational Approach to the Modeling and Analysis of Magnetic Complex Fluids},
    Ph.D. thesis, University of Wurzburg, 2016.

    \bibitem{Gilbert-1955}  T.L. Gilbert, A Lagrangian formulation of the gyromagnetic equation of the magnetic field,
    {\em Phys. Rev.}, {\bf 100} (1955), 1243.

    \bibitem{Gilbert-2004} T.L. Gilbert, A phenomenological theory of damping in ferromagnetic materials, {\em IEEE Trans.
    Magn.}, {\bf 40} (2004), pp. 3443-3449.

    \bibitem{James-Kinderlehrer-1993} R. D. James and D. Kinderlehrer,
    Theory of magnetostriction with applications to $Tb_xDy_{1-x}Fe_2$. {\em Philos. Mag.}, {\bf 68} (1993), 237-274.

    \bibitem{Jiang-Luo-2019-SIAM} N. Jiang and Y.-L. Luo, On well-posedness of Ericksen-Leslie's hyperbolic incompressible liquid crystal model, {\em SIAM J. Math. Anal.}, {\bf 51} (2019), no. 1, 403-434.

    \bibitem{KKS-2019} Martin Kalousek, Joshua Kortum, Anja Schlomerkemper,
    Mathematical analysis of weak and strong solutions to an evolutionary model for magnetoviscoelasticity. arXiv:1904.07179v1


    \bibitem{Kruzik-Stefanelli-Zeman} M. Kruzik, U. Stefanelli, and J. Zeman, Existence results for incompressible magnetoelasticity,
    {\em Discrete Contin. Dyn. Syst.}, {\bf 35} (2015), pp. 2615-2623.

    \bibitem{LL-1935} L. Landau and E. Lifshitz, On the theory of the dispersion of magnetic permeability in
    ferromagnetic bodies, {\em Phys. Z. Sowjetunion}, {\bf 8} (1935), pp. 153-169.

   \bibitem{Lin-Liu-CPAM1995} F-H. Lin and C. Liu,
        Nonparabolic dissipative systems modeling the flow of liquid crystals. {\em Comm. Pure Appl. Math.} {\bf 48} (1995), no. 5, 501-537.

    \bibitem{Lin-Liu-Zhang-CPAM2005} F-H. Lin, C. Liu and P. Zhang, On hydrodynamics of viscoelastic fluids. {\em Comm. Pure Appl. Math.} {\bf 58} (2005), no. 11, 1437-1471.

    \bibitem{Lin-Zhang-2008-CPAM} F-H. Lin and P. Zhang. On the initial-boundary value problem of the incompressible viscoelastic fluid system. {\em Comm. Pure Appl. Math.} {\bf 61 } (2008), no. 4, 539-558.

    \bibitem{Liu-Walkington-ARMA2001} An Eulerian description of fluids containing viscoelastic particles,
    {\em Arch. Ration. Mech. Anal.}, {\bf 159} (2001), pp. 229-252.

    \bibitem{Melcher-2007} C. Melcher, A dual approach to regularity in thin film micromagnetics, {\em Calc. Var. Partial
    Differential Equations}, {\bf 29} (2007), pp. 85-98.

    \bibitem{Melcher-2010} C. Melcher, Thin-film limits for Landau-Lifshitz-Gilbert equations, {\em SIAM J. Math. Anal.}, {\bf 42}
    (2010), pp. 519-537.

    \bibitem{Mielke-Roubicek} A. Mielke and T. Roubicek, {\em Rate-Independent Systems: Theory and Application}, Appl. Math. Sci., Springer, New York, 2015.

    \bibitem{RT-2017} T. Roubicek and G. Tomassetti,
    A thermodynamically consistent model of magneto-elastic materials under diffusion at large strains and its analysis. {\em Z. Angew. Math. Phys.} {\bf 69} (2018), no. 3, Art. 55

    \bibitem{Temam-1977-BOOK} R. Temam. {\em Navier-Stokes equations. Theory and numerical analysis. Studies in Mathematics and its Applications}, Vol. 2. North-Holland Publishing Co., Amsterdam-New York-Oxford, 1977.

    \bibitem{Tiersten-1964-JMP} H. F. Tiersten, Coupled magnetomechanical equations for magnetically saturated insulators, {\em J. Math. Phys.}, {\bf 5} (1964), pp. 1298-1318.

    \bibitem{Tiersten-1965-JMP} H. F. Tiersten, Variational principle for saturated magnetoelastic insulators, {\em  J. Math. Phys.}, {\bf 6} (1965), pp. 779-787.
    
    \bibitem{Zhao-DCDS2019} W. Zhao, Local well-posedness and blow-up criteria of magneto-viscoelastic flows.  {\em Discrete Contin. Dyn. Syst.} {\bf 38} (2018), no. 9, 4637-4655. 


\end{thebibliography}

\end{document}